\title{Splitting formulas for the logarithmic double ramification cycle}
\numberwithin{equation}{subsection}
\let\oref\ref
\let\tilde\widetilde
\newcommand*{\doublerightarrow}[2]{\mathrel{
  \settowidth{\@tempdima}{$\scriptstyle#1$}
  \settowidth{\@tempdimb}{$\scriptstyle#2$}
  \ifdim\@tempdimb>\@tempdima \@tempdima=\@tempdimb\fi
  \mathop{\vcenter{
    \offinterlineskip\ialign{\hbox to\dimexpr\@tempdima+1em{##}\cr
    \rightarrowfill\cr\noalign{\kern.5ex}
    \rightarrowfill\cr}}}\limits^{\!#1}_{\!#2}}}
\newcommand*{\triplerightarrow}[1]{\mathrel{
  \settowidth{\@tempdima}{$\scriptstyle#1$}
  \mathop{\vcenter{
    \offinterlineskip\ialign{\hbox to\dimexpr\@tempdima+1em{##}\cr
    \rightarrowfill\cr\noalign{\kern.5ex}
    \rightarrowfill\cr\noalign{\kern.5ex}
    \rightarrowfill\cr}}}\limits^{\!#1}}}
\newcommand{\ca}[1]{{\mathcal{#1}}}
\newcommand{\ul}[1]{{\underline{#1}}}
\newcommand{\ol}[1]{{\overline{#1}}}
\newcommand{\Mbar}{\overline{\ca M}}
\newcommand{\Mpt}{\mathbb{M}}
\newcommand{\DRL}{\operatorname{DRL}}
\newcommand{\DR}{\operatorname{DR}}
\newcommand{\gp}{\operatorname{gp}}
\newcommand{\isom}{\stackrel{\sim}{\longrightarrow}}
\renewcommand{\gp}{{\operatorname{gp}}}
\newcommand{\tensor}{\otimes}
\newcommand{\LogSch}{\textbf{LogSch}}
\newcommand{\PL}{\text{PL}}
\newcommand{\res}{\text{res}}
\newcommand{\trop}{\mathrm{trop}}
\newcommand{\Jac}{\text{Jac}}
\let\div\relax
\DeclareMathOperator{\div}{div}
\DeclareMathOperator{\Hom}{Hom}
\DeclareMathOperator{\Aut}{Aut}
\DeclareMathOperator{\ev}{ev}
\DeclareMathOperator{\CH}{CH}
\DeclareMathOperator{\LogCH}{LogCH}
\DeclareMathOperator{\LogDR}{LogDR}
\DeclareMathOperator{\LLogDR}{\mathbb{L}ogDR}
\DeclareMathOperator{\LogDRL}{LogDRL}
\DeclareMathOperator{\LLogDRL}{\mathbb{L}ogDRL}
\DeclareMathOperator{\LogJ}{LogJac}
\DeclareMathOperator{\LLogJ}{\mathbb{L}ogJac}
\DeclareMathOperator{\lcm}{lcm}
\newcommand{\A}{\mathbb{A}}
\newcommand{\B}{\text{B}}
\newcommand{\G}{\mathbb{G}}
\newcommand{\N}{\mathbb{N}}
\renewcommand{\P}{\mathbb{P}}
\newcommand{\Q}{\mathbb{Q}}
\newcommand{\R}{\mathbb{R}}
\newcommand{\Z}{\mathbb{Z}}
\newcommand{\Lcal}{\mathcal{L}}
\newcommand{\Mcal}{\mathcal{M}}
\newcommand{\Ocal}{\mathcal{O}}
\renewcommand{\log}{\mathrm{log}}
\newcommand{\pt}{\mathsf{pt}}
\newcommand{\gl}{\mathrm{gl}}
\newcommand{\fs}{\mathrm{fs}}
\newcommand{\qs}{\mathrm{qs}}
\newcommand{\M}{{\mathsf{M}}}
\newcommand{\ghost}{\overline{{\mathsf {M}}}}
\newcommand{\pie}{\mathring}
\newcommand{\vfc}{\mathrm{vfc}}
\tikzset{
    labl/.style={anchor=south, rotate=-90, inner sep=.5mm}
}
\newcommand*{\pb}{\ar[dr, phantom, very near start, "{\ulcorner}"]}
\theoremstyle{definition}
\newtheorem{definition}{Definition}[section]
\newtheorem{example}[definition]{Example}
\newtheorem{remark}[definition]{Remark}
\theoremstyle{plain}% default
\newtheorem{proposition}[definition]{Proposition}
\newtheorem{lemma}[definition]{Lemma}
\newtheorem{theorem}[definition]{Theorem}
\newtheorem{corollary}[definition]{Corollary}
\newtheorem{intheorem}{Theorem}
\newtheorem{inproposition}[intheorem]{Proposition}
\theoremstyle{remark}
\LetLtxMacro{\phiorig}{\phi}
\renewcommand{\phi}{\varphi}
\renewcommand{\hat}{\widehat}
\renewcommand{\tilde}{\widetilde}
\author{Pim Spelier}
\date{\today}
\newcommand{\beq}{\begin{equation}}%depreciated
\newcommand{\eeq}{\end{equation}}%depreciated
\newcommand{\beqs}{\begin{equation*}}%depreciated
\newcommand{\eeqs}{\end{equation*}}%depreciated
\tikzset{
  symbol/.style={
    draw=none,
    every to/.append style={
      edge node={node [sloped, allow upside down, auto=false]{$#1$}}}
  }
}
\newcommand{\colim}{\operatornamewithlimits{colim}}
\begin{document}
\maketitle

\begin{abstract}
The logarithmic double ramification cycle is roughly a logarithmic Gromov--Witten invariant of $\mathbb{P}^1$. For classical Gromov--Witten invariants, formulas for the pullback along the gluing maps have been invaluable to the theory. For logarithmic Gromov--Witten invariants, such formulas have not yet been found. One issue is the fact that log stable maps cannot be glued. In this paper, we use the framework from \cite{Holmes2023LogarithmicCohomologicalFT} for gluing pierced log curves (a refinement of classical log curves) to give formulas for the pullback of the (log) (twisted) double ramification cycle along the loop gluing map.
\end{abstract}

%\tableofcontents

\section{Introduction}
In this paper we present splitting formulas for the pullback of the \emph{(twisted) double ramification cycle} and the \emph{logarithmic (twisted) double ramification cycle} along the gluing map
\begin{align*}
%\Mbar_{g_1,n_1+1} \times \Mbar_{g_2,n_2+1} \to \Mbar_{g,n} \\
\Mbar_{g-1,n+2} & \to \Mbar_{g,n}. 
\end{align*}

The logarithmic double ramification cycle is a logarithmic Gromov--Witten invariant of $[\P^1/\G_m]$, and is related to log Gromov--Witten invariants of toric varieties \cite{Ranganathan2023Logarithmic-Gromov-Witten} and Hurwitz numbers of $\P^1$ \cite{DoubleHurwitz2025CMR,cavalieri2024kleakydoublehurwitzdescendants}. The twisted (log) double ramification cycles are generalisations relevant to surface dynamics \cite{ChenTeichmullerDynamics} and computing strata of differentials and volumes of moduli spaces \cite{farkas24stratadifferentialsdoubleramification,SchmittDimensionTheory,SauvagetCohomologyClassesStrataOfDifferentials}. The splitting formulas for classical Gromov--Witten invariants \cite{Behrend1997GromovWittenAlgebraicGeometry} have been invaluable to enumerative geometry but have not yet seen logarithmic equivalents. Difficulties arise from the gluing map not respecting the log structure: in other words, one cannot glue log curves or log stable maps, nor pullback logarithmic classes.

For the double ramification cycle, a certain projection of the logarithmic double ramification cycle, a pullback formula along the separating gluing map $\Mbar_{g_1,n_1+1} \times \Mbar_{g_2,n_2+1} \to \Mbar_{g,n}$ was given in \cite{Buryak2019Quadratic-doubl}. In \cite{Holmes2023LogarithmicCohomologicalFT} a logarithmic enhancement of log curves as given, allowing for gluing of log curves and log stable maps. That framework was used to lift the result from \cite{Buryak2019Quadratic-doubl} to the logarithmic double ramification cycle. A formula for the pullback of the logarithmic double ramfiication cycle along the non-separating gluing map $\Mbar_{g-1,n+2} \to \Mbar_{g,n}$ was still open.

Here we use the framework from \cite{Holmes2023LogarithmicCohomologicalFT} to give a formula for the pullback of the logarithmic double ramification cycle along the non-separating gluing map. We use this to additionally compute a formula for the pullback of the classical double ramification cycle. This latter formula in the untwisted case is originally due to Zvonkine, in unpublished work \cite{zvonkine}, and was of great help in finding the logarithmic formula.  

We first give an introduction to the logarithmic double ramification cycle and logarithmic gluing maps. In \ref{insec:split} we explain the splitting formula, and the geometric story behind it.

\subsection{The (logarithmic) double ramification cycle.} Fix $g,n$ and let $A \in \Z^n$ be a sequence with sum $0$. Then the \emph{smooth double ramification locus} is the closed substack of $\Mcal_{g,n}$ defined as
\[
  \DRL(A)^\circ = \{C \in \Mcal_{g,n} : \Ocal_C(\sum_i a_i p_i) \cong \Ocal_C\} \subset \Mcal_{g,n}.
\]
The line bundle $\Ocal_C(\sum_i a_i p_i)$ is trivial if and only if there is a function $f: C \to \P^1$ with tangencies to $0$ and $\infty$ prescribed by the $a_i$. This leads to the natural compactification $\Mbar_{A}^{\sim}$ 
of stable maps to \emph{rubber}, stable maps to (chains of) $\P^1$ with tangency conditions given by $a_i$ \cite{Graber2003Hodge-integrals,Li2001Stable-morphism,Li2002A-degeneration-}. The pushforward of the (virtual) fundamental class of $\Mbar_A^\sim$ to $\Mbar_{g,n}$ is the \emph{double ramification cycle}
\[
\DR(A) \in \CH^g(\Mbar_{g,n})
\]
A formula for this class has been given in \cite{Janda2016Double-ramifica} (later generalised by \cite{Bae2020Pixtons-formula}).

The smooth double ramification locus also admits the description as the pullback of the Abel--Jacobi map $\Mcal_{g,n} \to \Jac_{g,n}, C \mapsto \Ocal(\sum_i a_i p_i)$ along the zero section $\Mcal \to \Jac_{g,n}$.

The Abel--Jacobi map does not extend to the boundary of $\Mbar_{g,n}$. In \cite{Holmes2017Extending-the-d} they construct a cycle $\hat{\DR}(A)$ on a log blowup\footnote{A log blowup of $\Mbar_{g,n}$ is an (iterated) blowup in boundary strata.} $\hat{\Mcal}_x \to \Mbar_{g,n}$ that (partially) resolves the Abel--Jacobi map, and then obtain $\DR(A)$ as the pushforward of $\hat{\DR}(A)$.

It turns out this object $\hat{\DR}(A)$ is in many ways more natural and better behaved. The \emph{logarithmic Chow ring} is the ring \[\LogCH^*(\Mbar_{g,n}) = \colim_{\tilde{\Mcal} \to \Mbar_{g,n}} \CH^*(\tilde{\Mcal}_{g,n})\] where the colimit is taken over all log blowups. 

In \cite{Holmes2021Logarithmic-int} they define the \emph{logarithmic double ramification cycle}
\[
  \LogDR(A) \in \LogCH^g(\Mbar_{g,n})
\]
as the image of $\hat{\DR}(A)$ under the inclusion $\CH^g(\hat{\Mcal}_A) \to \LogCH^g(\Mbar_{g,n})$.

This logarithmic refinement has played an important role. For example, recent work has shown one can compute logarithmic Gromov--Witten invariants of toric varieties and double Hurwitz numbers in terms of logarithmic double ramification cycles, but not in terms of the classical double ramification cycle \cite{Ranganathan2023Logarithmic-Gromov-Witten,DoubleHurwitz2025CMR,cavalieri2024kleakydoublehurwitzdescendants}. Furthermore, the logarithmic double ramification cycle satisfies multiplicative properties the double ramification cycle does not \cite{Holmes2017Multiplicativit,Holmes2021Logarithmic-int}.

This case has also been generalised to the \emph{twisted} (logarithmic) double ramification cycles $\LogDR(A)$ and $\DR(A)$ for a sequence $A \in \Z^n$ with sum $k(2g-2+n)$ for a (necessarily unique) $k \in \Z$, measuring the locus where the line bundle $\omega_{C}^{\log,\tensor -k}(\sum_i a_i p_i)$ is trivial. These cycles relate to the strata of differentials $[\mathcal{H}_g(A)]$, and this connection has been used to compute virtual classes and intersection theory of strata of differentials \cite{farkas24stratadifferentialsdoubleramification,SchmittDimensionTheory,SauvagetCohomologyClassesStrataOfDifferentials}. See \cite{chen22ataleoftwomodulispaces} for an overview of the connection between the logarithmic double ramification cycle and multiscale differentials.

A formula for the logarithmic double ramification cycle has been given in \cite{Holmes2022Logarithmic-double}.

\subsection{(Logarithmic) gluing maps.}
The gluing maps have played a central role in the enumerative geometry of curves. The splitting formulas for Gromov--Witten invariants, first shown in \cite{Behrend1997GromovWittenAlgebraicGeometry}, have been instrumental, and by the Givental--Teleman classification sometimes even allow for the reconstruction of all Gromov--Witten invariants \cite{Kontsevich1994Gromov-Witten-c,Pandharipande2018CohFTCalculations}.

The degeneration formula \cite{abramovich2020decomposition,kim2018degeneration,ranganathan2019logarithmic} relates Gromov--Witten invariants to \emph{logarithmic} Gromov--Witten invariants, counting logarithmic curves with additional tangency conditions \cite{Li2001Stable-morphism,Li2002A-degeneration-,Gross2013Logarithmic-gro}.
For logarithmic Gromov--Witten invariants, splitting formulas are more difficult to obtain, and several issues pop up.

An important issue is the failure of gluing log curves. 
\begin{center}
\emph{The gluing maps are not maps of log stacks, only a map of underlying algebraic stacks.}
\end{center}
The gluing maps send the locus of smooth curves into the boundary, which is not allowed for a map of log stacks. Because of this, there is simply no possible way to glue logarithmic curves. It follows one cannot glue log stable maps. And since log blowups of $\Mbar_{g,n}$ do not pull back to log blowups of $\Mbar_{g_1,n_1+1} \times \Mbar_{g_2,n_2+1}$ or $\Mbar_{g-1,n+2}$, there is no pullback map on $\LogCH$.

This issue has a resolution in \cite{Holmes2023LogarithmicCohomologicalFT}, where logarithmic enhancements $\pi: \Mpt_{g,n} \to \Mbar_{g,n}$ are given, parametrising \emph{pierced} curves (see \ref{sec:pierceddr} for a summary). The map $\pi$ is an isomorphism on underlying stacks, but $\Mpt_{g,n}$ has a different, generically non-trivial, log structure, and a universal pierced curve. These pierced curves have logarithmic sections, and by taking coproducts one obtains gluing maps 
\begin{align*}
\Mpt_{g_1,n_1+1} \times \Mpt_{g_2,n_2+1} &\to \Mpt_{g,n} \\
\Mpt_{g-1,n+2} & \to \Mpt_{g,n} 
\end{align*}
that induce pullback maps
\begin{align*}
\LogCH(\Mpt_{g,n}) &\to \LogCH(\Mpt_{g_1,n_1+1} \times \Mpt_{g_2,n_2+1})  \\
\LogCH(\Mpt_{g,n}) &\to \LogCH(\Mpt_{g-1,n+2}). 
\end{align*}

Tropically, pierced curves correspond to metrised graphs where legs also have lengths. Gluing two legs then produces an edge of length the sum of the two leg lengths.

We can now define a further refinement of the logarithmic double ramification cycle, namely the \emph{pierced} logarithmic double ramification cycle.

\begin{definition}[{\ref{def:pierceddr}}]
Let $\pi: \Mpt_{g,n} \to \Mbar_{g,n}$ be the forgetful map. For $A \in \Z^n$ with $\sum_i a_i = k(2g-2+n)$, define the \emph{pierced} logarithmic double ramification cycle
\[
  \LLogDR(A) = \pi^* \LogDR(A).
\]
\end{definition}

In this paper, we use this framework to give a splitting formula for this further refinement of the logarithmic double ramification cycle. 

By the universal property of gluing proven in \cite{Holmes2023LogarithmicCohomologicalFT}, one also obtains gluing maps
\begin{align*}
\Mpt_{g_1,n_1+1}(X) \times_X \Mpt_{g_2,n_2+1}(X) &\to \Mpt_{g,n}(X) \\
\Mpt_{g-1,n+2}(X) \times_{X \times X} X & \to \Mpt_{g,n}(X). 
\end{align*}

In future work \cite{2025HerrHolmesSpelierBirationalityAndGluingGromovWitten} this will be used to prove a splitting formula for logarithmic Gromov--Witten invariants.

\subsection{The splitting formula}
\label{insec:split}
Our main result is the logarithmic splitting formula \ref{inthm:firstformula}, a recursive formula in terms of smaller double ramification cycles and piecewise polynomials.

To clarify the formula, we explain the recursive structure of the double ramification locus on the boundary. See \ref{fig:m12,fig:mbar12} for two pictures corresponding to the classical case and the logarithmic case. We first treat the classical double ramification locus
\[
  \DRL_{g+1}(A) = \{(C,p_1,\dots,p_{n-2}) : \exists f: C \to \P^1, \div f = \sum_{i=1}^{n-2} a_i p_i\} \subset \Mbar_{g+1,n-2}
\]
for a fixed $A \in \Z^{n-2}$ with sum $0$. This is \emph{not} a closed substack of $\Mbar_{g+1,n-2}$.

\tikzmath{\dy1 = -3; \xA = 1.3; \arsize = 2mm;}

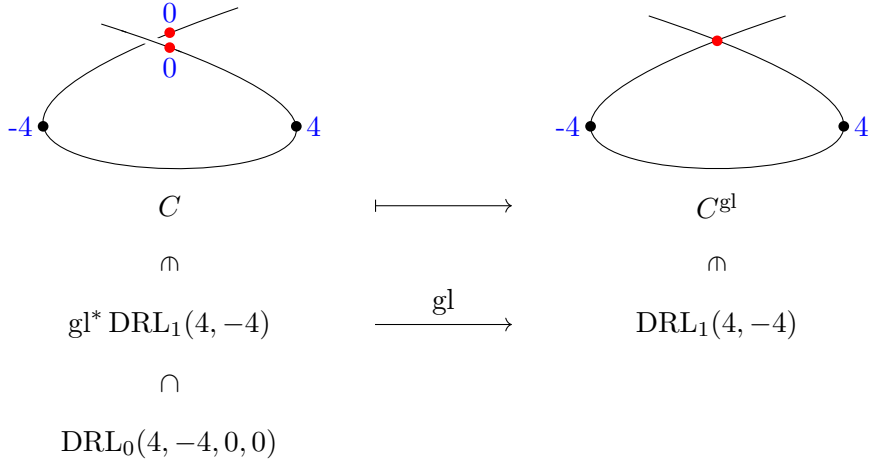
\begin{figure}
\centering
\begin{tikzpicture}[scale=.9,xshift=-70]
        % \draw[->] (A) to [in=180-30,out=30] node [midway, above, blue] {$b = 1$} (B);

        \useasboundingbox (-10,0) rectangle (2,-6.2);

        \begin{knot}[clip width=5,yshift=-20]
          \strand (-1,1) .. controls (8,-2) and (-8,-2) .. (1,1);
        \end{knot}

        \begin{scope}[every node/.style={circle,fill,inner sep=0pt, minimum width=4pt}]
    \node[fill=red] (A) at (0,-0.07) {};
    \node at (-1.85,-1.33) {};
    \node at (1.85,-1.33) {};
    \end{scope}
        \node at (-1.85,-1.33) [left,blue] {-4};
    \node at (1.85,-1.33) [right,blue] {4};

        \node at (0,-2.5) {$C^\gl$};
        \node at (-8,-2.5) {$C$};

        \node [rotate = -90] at (0,-3.35) {$\in$};
        \node [rotate = -90] at (-8,-3.35) {$\in$};
        % \node at (-9.25, -2.5) {$\ni$};

        % \node at (-10,-2.5) {$\LogDRL_0(4,4,1,-1)$};
        \node at (0,-4.25) {$\DRL_1(4,-4)$};

        \node [rotate = -90] at (-8,-5.12) {$\subset$};

        \node at (-8,-6) {$\DRL_0(4,-4,0,0)$};

        \node at (-8,-4.25) {$\gl^*\DRL_1(4,-4)$};

    \draw[|->] (-8+3,-2.5) --  (-3,-2.5);
    \draw[->] (-8+3,-4.25) -- node[midway,above] {$\gl$} (-3,-4.25);
     % \draw[->] (-8+3,-6) -- node[midway,above] {$\gl$} (-3,-6);
    %\draw[|->] (0,-2) -- (0,-1.2);

    \begin{scope}[shift=({-8,0})]
    \begin{knot}[clip width=3, consider self intersections, ignore endpoint intersections=false, yshift=-20]
          \strand (-1,.88) .. controls (8,-2) and (-8,-2) .. (1,1.12);
        \end{knot}

      \begin{scope}[every node/.style={circle,fill,inner sep=0pt, minimum width=4pt}]
    \node[fill=red] at (0,-0.17) {};
    \node[fill=red] at (0,0.05) {};
    \node at (-1.85,-1.33) {};
    \node at (1.85,-1.33) {};
    \end{scope}
    \node at (0,-0.17) [below,blue] {0};
    \node at (0,0.05) [above,blue] {0};
        \node at (-1.85,-1.33) [left,blue] {-4};
    \node at (1.85,-1.33) [right,blue] {4};
    \end{scope}
\end{tikzpicture}
\caption{A curve $C$ in $\gl^*\DRL_1{(4,-4)} \subset \Mbar_{0,4}$. The glued curve $C^\gl \in \DRL_1(4,-4) \subset \Mbar_{1,2}$ admits a rational function $f$ with divisor $\div f = 4p_1 - 4p_2$ shown in blue. The pulback $\gl^*f$ on $C$ is a rational function with divisor $4p_1 - 4p_2$, hence we have $C \in \DRL_0(4,-4,0,0)$.}
\label{fig:m12}
\end{figure}

Let $\gl^*\DRL_{g+1}(A)$ denote the pullback along $\gl: \Mbar_{g,n} \to \Mbar_{g+1,n-2}$. Take $C \in \gl^*\DRL_{g+1}(A)$, a curve of genus $g$ with $n$ markings $p_1,\dots,p_n$, and let $\gl: C \to C^\gl$ be the gluing of $p_{n-1}$ and $p_n$. By definition of $\gl^* \DRL(A)$, on $C^\gl$ there is a rational function $f: C^\gl \to \P^1$ on $C^\gl$ with \[\div f = \sum_{i=1}^{n-2} a_i p_i.\] Then $f_0 = \gl^* f$ is a rational function $f_0: C \to \P^1$ with $\div f_0 = \sum_{i=1}^{n-2} a_i p_i$ and $f_0(p_{n-1}) = f_0(p_n) \ne 0$. We find \begin{align*}\gl^* \DRL_{g+1}(A) &= \{(C,p_1,\dots,p_n) \in \Mbar_{g,n} : \exists f: C \to \P^1, \div f = \sum_i a_i p_i \text{ and } f(p_{n-1}) = f(p_n)\}\\ &\subset \DRL_g(A,0,0)\end{align*} or equivalently, we obtain a pullback diagram
\begin{equation}
\label{diag:dr}
\begin{tikzcd}[contains/.style = {draw=none,"\ni" description,sloped}]
\gl^* \DRL_{g+1}(A) \arrow[r]\arrow[d] & \DRL_g(A,0,0) \arrow[d] \arrow[r,contains] &\arrow[d,mapsto] (C,p_1,\dots,p_n, f: C \to \P^1) \\
1 \arrow[r] & \G_m \arrow[r,contains] & f(p_{n-1})/f(p_n)
\end{tikzcd}
\end{equation}

\begin{figure}
\centering
\begin{tikzpicture}[scale=.75]
    \begin{scope}[every node/.style={circle, draw,fill=black!50,inner sep=0pt, minimum width=4pt}]
    \node (A) at (-\xA,0) {};
    \node (B) at (\xA,0) {};

    \end{scope}
        % \draw[->] (A) to [in=180-30,out=30] node [midway, above, blue] {$b = 1$} (B);
        \draw[line width = 1.7, red, -{Latex[length=\arsize, width=\arsize]}] (A) to [in=180-30,out=30] node [midway, above, blue] {$b = 1$} (B);
        \draw[-{Latex[length=\arsize, width=\arsize]}] (A) to [in=180+30,out=-30] node [midway, below, blue] {$3$} (B);
        \draw[-{Latex[length=\arsize, width=\arsize]}] (A) -- node [midway, above, blue] {$-4$} (-2*\xA,0);
        \draw[-{Latex[length=\arsize, width=\arsize]}] (B) -- node [midway, above, blue] {$4$} (2*\xA,0);

        \node at (0,-2.5) {$C^\gl$};
        \node at (-8,-2.5) {$C$};

        \node [rotate = -90] at (0,-4.25) {$\in$};
        \node [rotate = -90] at (-8,-3.35) {$\in$};
        % \node at (-9.25, -2.5) {$\ni$};

        % \node at (-10,-2.5) {$\LogDRL_0(4,4,1,-1)$};
        \node at (0,-6) {$\LogDRL_1(4,-4)$};

        \node [rotate = -90] at (-8,-5.12) {$\subset$};

        \node at (-8,-6) {$\gl^* \LogDRL_1(4,-4)$};

        \node [xshift = -57] at (-8,-4.25) {$\LogDRL_0(4,-4,1,-1) \supset \LogDRL_{(4,-4),b=1}^\gl$};
        % \node [rotate = -180] at (-10,-4.25) {$\subset$};
        % \node [left] at (-10.5,-4.25) {$$};
    % \draw[->, decorate, decoration={zigzag, post=lineto,post length=2pt}] (-8+3,0) -- node [midway, above] {glues to} (-3,0);
    % \draw[->, decorate, decoration={zigzag, post=lineto,post length=2pt}] (-8+3,-2.5) -- node [midway, above] {glues to} (-3,-2.5);
    \draw[->, decorate, decoration={zigzag, post=lineto,post length=2pt}] (0,-2) -- node [midway, right, xshift = 2] {tropicalises to} (0,-1.2);
    \draw[->, decorate, decoration={zigzag, post=lineto,post length=2pt}] (-8,-2) -- node [midway, left, xshift = -2] {tropicalises to} (-8,-1.2);

    \draw[|->] (-8+3,0) --  (-3,0);
    \draw[|->] (-8+3,-2.5) -- (-3,-2.5);
     \draw[->] (-8+3,-6) -- node[midway,above] {$\gl$} (-3,-6);
    %\draw[|->] (0,-2) -- (0,-1.2);

    \begin{scope}[shift=({-8,0})]
    \begin{scope}[every node/.style={circle, draw,fill=black!50,inner sep=0pt, minimum width=4pt}]
    \node (C) at (-\xA,0) {};
    \node (D) at (\xA,0) {};
    \end{scope}
        \draw [line width = 1.7, red, -{Latex[length=\arsize, width=\arsize]}] (C) to [out=35,in = 190] node [pos=.6, above, blue] {$1$} (0,.6);
        \draw [line width = 1.7, red, -{Latex[length=\arsize, width=\arsize]}] (D) to [out=180-20,in = -10] node [pos=.6, above, blue] {$-1$} (0,.4);
        \draw[-{Latex[length=\arsize, width=\arsize]}] (C) to [in=180+20,out=-20] node [midway, below, blue] {$3$} (D);
        \draw[-{Latex[length=\arsize, width=\arsize]}] (C) -- node [midway, above, blue] {$-4$} (-2*\xA,0);
        \draw[-{Latex[length=\arsize, width=\arsize]}] (D) -- node [midway, above, blue] {$4$} (2*\xA,0);
    \end{scope}
\end{tikzpicture}
\caption{A logarithmic curve $C$ in $\LogDRL_{(4,-4),b=1}^\gl \subset \gl^*\DRL_1{(4,-4)} \subset \Mbar_{0,4}$ and its tropicalisation $C^\trop$. The glued curve $C^\gl \in \DRL_1(4,-4) \subset \Mbar_{1,2}$ admits a map $f: C^\gl \to \G_\log$ whose tropicalisation is the piecewise linear function on $C^{\gl,\trop}$ with slopes shown in blue. The pulback $\gl^*f$ on $C$ is a rational function whose tropicalisation has slopes $(4,-4,1,-1)$ along the legs, hence we have $C \in \DRL_0(4,-4,1,-1)$.}
\label{fig:mbar12}
\end{figure}

For the logarithmic double ramification locus $\LLogDRL_{g+1}(A)$ (\ref{def:pierceddr}), a compactification of $\DRL_{g+1}(A)$, there is an additional phenomenon of tangencies at the new node. For $C^\gl \in \LLogDRL_{g+1}(A)$ the gluing of a pierced log curve $C \in \Mpt_{g,n}$, there is a piecewise linear function $\alpha_A: \Gamma^{\gl}$ on the metrised dual graph $\Gamma^\gl$ with slopes given by $a_i$ on the leg $p_i$, and total sum of slopes $0$ at every vertex. See \ref{fig:mbar12} for an example with $g = 0, n = 4, A = (4,-4)$. Write $b \in \Z$ for the slope along the glued edge $e$ (oriented from $p_{n-1}$ to $p_n$). In \ref{sec:pullbacklogDR} we prove we then have $C \in \LLogDRL_g(A,b,-b)$. In \ref{def:complog} we show we have a splitting based on slopes\footnote{By properness, we have $\LLogDRL_{A,b}^\gl = \varnothing$ for $|b| \gg 0$. Explicitly, we show this holds for $|b| > \sum_{i : a_i > 0} a_i$ in \ref{lem:bounded}.}
\[
  \gl^* \LLogDRL_{g+1}(A) = \bigsqcup_{b \in \Z} \LLogDRL_{A,b}^\gl
\]
with
\[
  \LLogDRL_{A,b}^\gl \subset \LLogDRL_g(A,b,-b).
\]
To generalise \ref{diag:dr}, we use that the sections $p_1,\dots,p_n: S \to C$ of a pierced log curve $C/S$ are logarithmic, and hence we obtain evaluation maps (this unfortunately fails for punctured or classical log curves). This allows us to write down the following pullback diagram (\ref{prop:glogpullback})
\begin{equation}
\label{diag:logdr}
\begin{tikzcd}[contains/.style = {draw=none,"\ni" description,sloped}]
\LLogDRL_{A,b}^\gl \arrow[r]\arrow[d] & \LogDRL_g(A,b,-b) \arrow[d, "\ev_{n-1}/\ev_n"] \arrow[r,contains] &\arrow[d,mapsto] (C,p_1,\dots,p_n, f) \\
1 \arrow[r] & \G_\log \arrow[r,contains] & f(p_{n-1})/f(p_n)
\end{tikzcd}
\end{equation}
Here $\G_\log$ is the logarithmic multiplicative group, a proper group space compactifying $\G_m$ (see \ref{ex:glog}) that admits a blowup $\P^1 \to \G_\log$, and $\ev_{n-1},\ev_n$ are evaluation maps (see \ref{def:evmaps}). Recall that there is a map
\[
  \Phi: \PL(X^\trop) \to \LogCH(X) 
\]
from piecewise linear functions on the tropicalisation of $X$ to logarithmic Chow (see \ref{sub:logarithmic_chow}). In $\P^1$ we have $[1] = [0] = [\infty] \in \CH(\P^1)$, and hence $[1] = \Phi(\max(x,0)) = \Phi(\max(-x,0))$ for $x$ the linear function given by the inclusion $\Sigma_{\P^1} \to \R$.

In \ref{sec:pullbacklogDR} we extend this geometric story to $\Mpt_{g,n}$, and check the virtual fundamental classes on the different logarithmic loci are compatible with the Gysin pullback along $1 \to \G_\log$ in \ref{diag:logdr}. We obtain the following logarithmic splitting formula. The formula depends on a small non-degenerate stability condition $\theta$, as defined in \cite{Kass2017The-stability-s}. The formula is expressed in terms of explicit piecewise linear functions $\delta_{B}^\theta$ on the cone stack $\Mpt_{g,n}^\trop$ (\ref{def:alphabtheta}). On $\LLogDRL_g(A,b,-b)$ this piecewise linear function is independent of $\theta$, and is the tropicalisation of the map $\ev_{n-1}/\ev_n$ in \ref{diag:logdr}. 

\begin{intheorem}[\ref{thm:firstformula}]
\label{inthm:firstformula}
Let $\theta$ be a small non-degenerate stability condition.

Let $\gl$ be the map $\Mpt_{g,n} \to \Mpt_{g+1,n-2}$. Let $A \in \Z^{n-2}$ with sum $k(2g-2+n)$. For $b \in \Z$, let $\delta_{A,b,-b}^\theta \in \PL(\Mpt_{g,n})$ denote the piecewise linear function from \ref{def:alphabtheta}. We have a splitting %For $b \in \Z$, write $A_b = (A,b,-b)$.
\begin{equation}
% \label{eq:formulaevtrop}
\gl^* \LLogDR(A) = \sum_{b \in \Z} \LLogDR_{A,b}^\gl
\end{equation}
where for all $b \in \Z$ we have
\begin{align*}
\LLogDR_{A,b}^\gl &= \LLogDR(A,b,-b) \cdot \Phi(\max(\delta_{A,b,-b}^\theta,0)) \\ %&= \LLogDR(A,b,-b) \cdot \Phi(\frac12|\delta_{A,b,-b}^\theta|) \\
&= \LLogDR(A,b,-b) \cdot \Phi(\max(-\delta_{A,b,-b}^\theta,0))
\end{align*}
and we have $\LLogDR_{A,b} = 0$ if $|b| \geq \sum_{i : a_i > 0} a_i + |k|(2g-2+n)$.
\end{intheorem}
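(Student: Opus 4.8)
The plan is to deduce \ref{inthm:firstformula} from the pullback diagram \ref{diag:logdr} together with the splitting $\gl^*\LLogDRL_{g+1}(A) = \bigsqcup_{b} \LLogDRL_{A,b}^\gl$, by taking virtual fundamental classes throughout. First I would establish the set-theoretic decomposition: since $\Mpt_{g,n}$ and $\Mbar_{g,n}$ agree as algebraic stacks and the gluing map on $\Mpt$ refines the one on $\Mbar$, the locus $\gl^*\DRL_{g+1}(A)$ decomposes according to the slope $b$ of the piecewise linear function $\alpha_A$ along the glued edge — this is exactly \ref{def:complog}, and I would invoke it. The finiteness statement (only finitely many $b$ contribute, with the explicit bound) comes from properness of $\LLogDRL_{g+1}(A) \to \Mpt_{g+1,n-2}$; I would cite \ref{lem:bounded} for the untwisted bound $|b| > \sum_{i:a_i>0} a_i$ and then incorporate the twist by tracking that $\alpha_A$ now has a ``linear part'' coming from $\omega^{\log,\otimes -k}$ contributing at most $|k|(2g-2+n)$ to the slope — this gives the stated bound $|b| \geq \sum_{i:a_i>0}a_i + |k|(2g-2+n)$.

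Next I would carry out the main computation: apply the refined Gysin pullback $1 \to \G_\log$ to \ref{diag:logdr}. The square is Cartesian, so $[\LLogDRL_{A,b}^\gl]^{\vfc} = (1 \to \G_\log)^! [\LLogDRL_g(A,b,-b)]^{\vfc}$, where the virtual class on the left is pulled back from $\gl^*\LLogDR(A)$ — I would need the compatibility of virtual fundamental classes across the decomposition, which the excerpt says is checked in \ref{sec:pullbacklogDR}, so I would reference that. The key identity is then the translation of the Gysin pullback along the zero section of $\G_\log$ into logarithmic Chow: the map $\ev_{n-1}/\ev_n: \LogDRL_g(A,b,-b) \to \G_\log$ has tropicalisation $\delta_{A,b,-b}^\theta$ (this is asserted in the paragraph before the theorem, for the statement that on $\LLogDRL_g(A,b,-b)$ the function $\delta_B^\theta$ is $\theta$-independent and equals that tropicalisation), and pulling back the class of the zero section $[1] \in \CH(\G_\log)$ along a map with this tropicalisation yields $\Phi(\max(\delta_{A,b,-b}^\theta,0))$ by the displayed identity $[1] = \Phi(\max(x,0)) = \Phi(\max(-x,0))$ on $\P^1 \to \G_\log$. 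This simultaneously gives both expressions for $\LLogDR_{A,b}^\gl$ (from the two descriptions of $[1]$), proving their equality as a byproduct.

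Finally I would assemble: summing over the finitely many relevant $b$ and using $\gl^*\LLogDR(A) = \pi^*\gl^*\LogDR(A)$ together with the decomposition of the double ramification locus, one gets $\gl^*\LLogDR(A) = \sum_b \LLogDR_{A,b}^\gl$ with each term as stated. The main obstacle I anticipate is the compatibility of virtual fundamental classes under the decomposition and under the Gysin pullback — i.e. verifying that $(1 \to \G_\log)^!$ applied to $[\LogDRL_g(A,b,-b)]^{\vfc}$ genuinely recovers the piece $\LLogDR_{A,b}^\gl$ of $\gl^*\LLogDR(A)$ rather than something that only agrees after pushforward to $\Mbar_{g,n}$. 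This requires knowing that the obstruction theories (coming from the Abel--Jacobi / logarithmic Jacobian picture, resolved on log blowups) restrict compatibly, which is the technical heart deferred to \ref{sec:pullbacklogDR}; modulo that input, and modulo identifying the tropicalisation of $\ev_{n-1}/\ev_n$ with $\delta_{A,b,-b}^\theta$ via \ref{def:alphabtheta}, the theorem follows formally.
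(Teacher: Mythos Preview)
Your proposal is correct and follows essentially the same approach as the paper: decompose $\gl^*\LLogDRL(A)$ by the slope $b$ along the glued edge (\ref{def:complog}), identify each piece as the fiber of $\ev_{n-1}/\ev_n:\LLogDRL(A,b,-b)\to\G_\log$ over $1$ (\ref{prop:glogpullback}), verify the virtual classes are compatible with this Gysin pullback (\ref{prop:vfceq}, the technical heart you correctly flag), and then compute using $[1]=\Phi(\max(\pm x,0))$ (\ref{lem:1eq0}). One small correction: \ref{lem:bounded} already treats the twisted case and gives the full bound $|b|<\sum_{i:a_i>0}a_i+|k|(2g-2+n)$, so no extra argument for the twist is needed there.
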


For any piecewise linear function $f$ on $\Mpt_{g,n}^\trop$, the class $\Phi(f)$ is a linear combination of boundary divisors and $\psi$-classes by \cite[Lemma~4.3]{Holmes2023LogarithmicCohomologicalFT}. In \ref{sec:pullbackDR} we show the pushforward of $\LLogDR_{A,b}^\gl$ to $\CH(\Mbar_{g,n})$ can be explicitly computed in terms of smaller double ramification cycles by writing $\Phi(\max(\delta_{A,b,-b}^\theta,0))$ as a sum of boundary divisors for $b \geq 0$, and similarly for $\Phi(\max(-\delta_{A,b,-b}^\theta,0))$ for $b \leq 0$. This leads to the following formula, for $k = 0$ first proven by Zvonkine in \cite{zvonkine}.

\begin{intheorem}[\ref{def:banana},\ref{thm:secondformula}]
\label{inthm:secondformula}
Write $S_{A,b}$ for the set of pairs
\[
  \{(\Gamma,B)\}
\]
where $\Gamma$ is a banana graph\footnote{A stable graph with two vertices $v_1,v_2$, and no loops, with leg $p_{n-1}$ incident to $v_1$ and $p_n$ incident to $v_2$}, and $B \in \Z^{E(\Gamma)}$ with sum $-b - \sum_{i \in L(v_1)} a_i + k(2g(v_1) - 2 + n(v_1))$. If $b \geq 0$, we require all $b_e > 0$ and if $b < 0$ we require $b_e < 0$.

For $(\Gamma,B) \in S_{A,b}$, we write $C_1$ for $((a_i)_{i \in L(v_1)},b,(b_e)_{e \in E(\Gamma)}) \in \Z^{H(v_1)}$ and 
$C_2$ for $((a_i)_{i \in L(v_2)},-b,(-b_e)_{e \in E(\Gamma)}) \in \Z^{H(v_2)}$.  
We have
\begin{equation}
\begin{split}
\gl^* \DR(A) = \sum_{b} \sum_{(\Gamma,B) \in S_{A,b}} \frac{\prod_{e \in E(\Gamma)}|b_e|}{\Aut(\Gamma,B)} \zeta_{\Gamma,*}\left(\DR(C_1)\tensor\DR(C_2)\right).
\end{split}
\end{equation}
\end{intheorem}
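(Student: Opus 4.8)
The plan is to deduce \ref{inthm:secondformula} from the logarithmic formula \ref{inthm:firstformula} by pushing forward along the forgetful map $\epsilon\colon\Mpt_{g,n}\to\Mbar_{g,n}$. Since $\epsilon$ is an isomorphism on underlying stacks, every log blowup of $\Mpt_{g,n}$ is proper birational over $\Mbar_{g,n}$, giving a pushforward $\epsilon_*\colon\LogCH(\Mpt_{g,n})\to\CH(\Mbar_{g,n})$ with $\epsilon_*\LLogDR(B)=\DR(B)$ for every admissible weighting $B$, because $\LLogDR$ is pulled back from $\LogDR$ and $\DR$ is the realisation of $\LogDR$. This pushforward intertwines the pierced gluing pullback of \cite{Holmes2023LogarithmicCohomologicalFT} with the ordinary pullback on $\CH$ — a formal consequence of the constructions, checked by comparing resolutions over a common log blowup of $\Mpt_{g,n}$ — so applying $\epsilon_*$ to \ref{inthm:firstformula}, together with the vanishing $\LLogDR_{A,b}=0$ for $|b|\geq\sum_{i:a_i>0}a_i+|k|(2g-2+n)$ already contained there, reduces the theorem to computing, for each of the finitely many remaining $b$, the pushforward $\epsilon_*\bigl(\LLogDR(A,b,-b)\cdot\Phi(\max(\pm\delta^\theta_{A,b,-b},0))\bigr)$ in $\CH(\Mbar_{g,n})$, with the sign $+$ for $b\ge 0$ and $-$ for $b<0$ (either being legitimate by the two expressions for $\LLogDR_{A,b}^\gl$ in \ref{inthm:firstformula}).

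\textbf{Resolving the piecewise linear function.} Next I would analyse $\delta^\theta_{A,b,-b}$ through \ref{def:alphabtheta}: on the double ramification locus it is $\theta$-independent and equals the tropicalisation of $\ev_{n-1}/\ev_n$, namely the difference $\alpha(p_{n-1})-\alpha(p_n)$ of the values of the supporting piecewise linear flow $\alpha$, so along a cone $\sigma_\Gamma$ of $\Mpt_{g,n}^\trop$ its slopes are the $\alpha$-slopes $b_e$ along the edges, which vanish unless the edge separates $p_{n-1}$ from $p_n$. Passing to a log blowup $\tilde{\Mpt}\to\Mpt_{g,n}$ on which $\max(\pm\delta^\theta_{A,b,-b},0)$ is piecewise linear, $\Phi(\max(\pm\delta^\theta_{A,b,-b},0))$ becomes a sum of boundary divisors of $\tilde{\Mpt}$: the divisors of $\Mpt_{g,n}$ itself contribute the one-edge banana graphs, while exceptional divisors over the codimension-$e$ banana strata of $\Mbar_{g,n}$ contribute the $e$-edge banana graphs (\ref{def:banana}), and the chamber decomposition of the fan of $\max(\pm\delta^\theta,0)$ picks out precisely the flows $B=(b_e)_{e\in E(\Gamma)}$ with $\pm b_e>0$ — exactly the sign condition in $S_{A,b}$. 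The independence of the formula from the sign of $\delta^\theta$ is the tropical shadow of $[1]=[0]=[\infty]$ in $\CH(\P^1)$; using \cite[Lemma~4.3]{Holmes2023LogarithmicCohomologicalFT} one must also check that the $\psi$-class part of $\Phi(\max(\pm\delta^\theta,0))$ contributes nothing, so that no $\psi$-classes survive.

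\textbf{Evaluating the pushforward.} It then remains to push $\LLogDR(A,b,-b)$, pulled back to $\tilde{\Mpt}$ and multiplied by the divisor above, forward to $\CH(\Mbar_{g,n})$, and to identify the contribution of each banana stratum. On the stratum attached to an $e$-edge banana graph $\Gamma$ I would invoke the boundary-restriction (splitting) behaviour of the twisted double ramification cycle, expressing the restriction of $\DR(A,b,-b)$ there as $\zeta_{\Gamma,*}$ of a product of smaller twisted double ramification cycles with half-edge weightings fixed by the flow. Flow conservation at $v_1$ — the $\alpha$-slopes there summing to $k(2g(v_1)-2+n(v_1))$, with value $a_i$ on each leg in $L(v_1)$, $b$ on $p_{n-1}$, and $b_e$ on each edge — is precisely the constraint $\sum_{e\in E(\Gamma)}b_e=-b-\sum_{i\in L(v_1)}a_i+k(2g(v_1)-2+n(v_1))$ defining $S_{A,b}$, and the induced weightings are $C_1$ and $C_2$. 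Combining the slope of $\max(\pm\delta^\theta,0)$ along the relevant exceptional rays with the edge multiplicities coming out of the DR splitting produces the factor $\prod_{e\in E(\Gamma)}|b_e|$, the stacky structure of $\zeta_\Gamma$ produces $1/|\Aut(\Gamma,B)|$, and summing over $b$ and over $(\Gamma,B)\in S_{A,b}$ gives the claimed formula.

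\textbf{Main obstacle.} The crux is the middle two steps: controlling $\Phi(\max(\pm\delta^\theta_{A,b,-b},0))$ sharply — the explicit shape of $\delta^\theta_{A,b,-b}$ from \ref{def:alphabtheta} together with its $\theta$-independence on double ramification loci, the precise combinatorics of the log blowup resolving the $\max$ (which banana strata appear, with which multiplicities), and the vanishing of the $\psi$-part — and then matching the exceptional-divisor contributions against a clean, functorial boundary-restriction statement for twisted double ramification cycles so that the weights emerge exactly as $\prod_{e\in E(\Gamma)}|b_e|$ and $1/|\Aut(\Gamma,B)|$ with no residual tautological correction. The rest is bookkeeping: orientations of the glued edge, automorphisms of multi-edge banana graphs, and the equivalence of the two sign conventions for $\delta^\theta$.
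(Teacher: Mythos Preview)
Your proposal is correct and follows essentially the same route as the paper: push the logarithmic splitting formula forward, expand $\Phi(\max(\pm\delta^\theta_{A,b,-b},0))$ over boundary divisors indexed by $(\Gamma,B)$, and identify each contribution with a product of smaller twisted DR cycles. Two points of execution differ from the paper and are worth noting.

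First, your plan to ``check that the $\psi$-class part of $\Phi(\max(\pm\delta^\theta,0))$ contributes nothing'' via \cite[Lemma~4.3]{Holmes2023LogarithmicCohomologicalFT} works, but the paper does this more cleanly: writing $\tilde{\Mpt}_{g,n}^\trop = \tilde{\Mcal}_{g,n}^\trop \times \R_{\geq 0}^n$ with projection $\pi$ and zero section $s$, one checks that for $b\ge 0$ the function $\delta$ is $\le 0$ on every leg-length ray (only the rays $\ell_{n-1},\ell_n$ are relevant, and there $\delta=-b\ell_{n-1},-b\ell_n$), so $\max(\delta,0)$ agrees on all rays with its own $\pi^*s^*$, hence $\max(\delta,0)=\pi^*\delta'$ is pulled back from the \emph{unpointed} tropical moduli space. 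This single observation replaces any separate $\psi$-class verification and simultaneously lets one work entirely on $\tilde{\Mcal}_{g,n}$.

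Second, for the multiplicity $\prod_e|b_e|$ you appeal to a general ``boundary-restriction (splitting) behaviour'' of the twisted DR cycle. The paper instead gives a direct geometric argument on the log blowup: the map from the stratum $\Mbar_{\Gamma,B}\subset\tilde{\Mcal}_{g,n}$ to the product $\Mbar_{g(v_1),n(v_1)}\times\Mbar_{g(v_2),n(v_2)}$ restricts to a Kummer log \'etale $\mu_B$-torsor $\LogDRL(A,b,-b)\cap\Mbar_{\Gamma,B}\to\LogDRL(C_1)\times\LogDRL(C_2)$, modelled on $\{(x_e): x_e^{b_e}=\delta'\}$ over the log point, and $\delta'$ induces a class of degree $\prod_e b_e$ on this torsor. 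This yields both the factor $\prod_e|b_e|$ and the $\Aut(\Gamma,B)^{-1}$ in one stroke, without importing an external splitting statement for $\LogDR$ on strata of a log blowup --- a statement which, as you note in your ``main obstacle'', would itself need justification in this context.
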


The equalities in \ref{inthm:firstformula} also give rise to a special case of the following tautological relations. We use the language of pierced curves to give a short geometric proof of these relations, first proven in \cite{costantini2025integralspsiclassestwisteddouble} using relations to strata of differentials for a dense set of $A$, and polynomiality of the double ramification cycle \cite{Pixton2023DRPoly,spelier2024polynomiality}.

\begin{inproposition}[{\cite[Proposition~3.1]{costantini2025integralspsiclassestwisteddouble}}, \ref{prop:splitpsi}]
\label{inprop:splitpsi}
Fix $A \in \Z^n$ with sum $k(2g-2+n)$. Then we have
\[
(a_{n-1}\psi_{n-1} - a_n \psi_n) \DR(A) + \sum_{(\Gamma,B) \in S}  \frac{s(\Gamma)}{\Aut(\Gamma,B)} \prod_{e \in E(\Gamma)} |b_e| [\Gamma,\DR(C_1) \tensor \DR(C_2)] = 0
\]
where the sum is over the set $S_{a,b} = \{(\Gamma,B)\}$ where
\begin{itemize}
  \item $\Gamma$ is a banana graph with vertices $v_1,v_2$;
  \item $B \in \Z_{> 0}^E$ with sum $- \sum_{i \in L(v_1)} a_i + k(2g(v_1) - 2 + n(v_1))$;
\end{itemize}
and \[s(\Gamma) = \begin{cases}  -1 &\text{ if } p_{n-1} \in L(v_1) \wedge p_n \in L(v_2) \\ 1 &\text{ if } p_{n-1} \in L(v_2) \wedge p_n \in L(v_1) \\ 0 &\text{ else}\end{cases}\]
and we write $C_1$ for $((a_i)_{i \in L(v_1)},(b_e)_{e \in E(\Gamma)}) \in \Z^{H(v_1)}$ and $C_2$ for $((a_i)_{i \in L(v_1)},(-b_e)_{e \in E(\Gamma)}) \in \Z^{H(v_2)}$.
\end{inproposition}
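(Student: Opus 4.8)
The plan is to derive \ref{inprop:splitpsi} directly from the two equal expressions for $\LLogDR_{A,b}^\gl$ in \ref{inthm:firstformula}, in the special case $b = 0$, and then push forward to $\CH(\Mbar_{g,n})$. The key observation is that the difference of the two formulas
\[
\Phi(\max(\delta_{A,0,0}^\theta,0)) - \Phi(\max(-\delta_{A,0,0}^\theta,0)) = \Phi(\delta_{A,0,0}^\theta)
\]
vanishes against $\LLogDR(A,0,0)$, since both sides compute $\LLogDR_{A,0}^\gl$. Here I would use that on $\LLogDRL_g(A,0,0)$ the piecewise linear function $\delta_{A,0,0}^\theta$ is independent of $\theta$ and equals the tropicalisation of $\ev_{n-1}/\ev_n$, as stated in the paragraph before \ref{inthm:firstformula}; its linear part along the legs $p_{n-1}$ and $p_n$ is $a_{n-1}$ and $-a_n$ respectively (this is the slope condition defining the double ramification locus with markings carrying these slopes). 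Applying the description of $\Phi$ of a piecewise linear function as a combination of $\psi$-classes and boundary divisors from \cite[Lemma~4.3]{Holmes2023LogarithmicCohomologicalFT}, the $\psi$-class contribution of $\Phi(\delta_{A,0,0}^\theta)$ is exactly $-(a_{n-1}\psi_{n-1} - a_n\psi_n)$ (sign bookkeeping to be checked against the orientation convention for the glued edge), while the boundary contributions are supported on the banana-type strata and, after multiplying by $\DR$ and pushing forward as in \ref{sec:pullbackDR}, produce precisely the sum over $S$ with the weights $\prod_e |b_e|/\Aut(\Gamma,B)$ and the sign function $s(\Gamma)$ recording which side of the banana each of $p_{n-1},p_n$ lies on.

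Concretely, the steps are: (1) specialise \ref{inthm:firstformula} to $b=0$ and subtract the two expressions to obtain $\LLogDR(A,0,0)\cdot\Phi(\delta_{A,0,0}^\theta) = 0$ in $\LLogCH(\Mpt_{g,n})$; (2) identify $\delta_{A,0,0}^\theta$ on the support of $\LLogDRL_g(A,0,0)$ with the PL function whose global linear part is $a_{n-1}x_{n-1} - a_n x_n$ and whose non-linearity is concentrated on the banana strata, using \ref{def:alphabtheta} and the interpretation as the tropicalisation of $\ev_{n-1}/\ev_n$; (3) expand $\Phi(\delta_{A,0,0}^\theta)$ via \cite[Lemma~4.3]{Holmes2023LogarithmicCohomologicalFT} into its $\psi$-class part and its boundary-divisor part; (4) push forward along $\Mpt_{g,n}\to\Mbar_{g,n}$, using that $\pi_*\LLogDR(A,0,0) = \DR(A,0,0)$ and that the forgetful map $\Mbar_{g,n+2}\to\Mbar_{g,n}$ (forgetting the two markings of slope $0$, equivalently restricting to the locus $\ev_{n-1}=\ev_n$) sends $\DR(A,0,0)$ to $\DR(A)$ and $\psi_{n-1},\psi_n$ compatibly; (5) recognise the boundary terms as the sum over $S$ in the statement, matching multiplicities via the identification of $\Phi$ of the relevant PL functions with boundary divisors carried out in \ref{sec:pullbackDR}, and reading off the sign $s(\Gamma)$ from which vertex the legs $p_{n-1}$ and $p_n$ attach to.

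The main obstacle I expect is the combinatorial and sign bookkeeping in step (4)--(5): translating the equality $\Phi(\delta_{A,0,0}^\theta)\cdot\LLogDR(A,0,0)=0$, which lives on the pierced moduli space with the two extra markings, into a statement on $\Mbar_{g,n}$ without those markings. One has to check that forgetting the markings $p_{n-1},p_n$ (which is legitimate precisely because their slopes are $0$, so they impose no tangency) intertwines the logarithmic splitting with the classical $\psi$-class relation, and that the self-intersection/automorphism factors $\prod_e|b_e|/\Aut(\Gamma,B)$ coming out of the $\Phi$-expansion of the boundary part of $\delta_{A,0,0}^\theta$ agree with those appearing in \ref{inthm:secondformula}. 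A secondary subtlety is verifying that the purely linear part $\Phi(a_{n-1}x_{n-1}-a_nx_n)$ contributes exactly the $\psi$-term $a_{n-1}\psi_{n-1}-a_n\psi_n$ with the correct overall sign; this is a direct application of the formula $\Phi(x_i) = \psi_i$ (up to the boundary corrections that cancel or are absorbed), but the orientation convention for the glued edge $e$ from $p_{n-1}$ to $p_n$ must be tracked carefully so that the two equalities of \ref{inthm:firstformula} produce the relation with the stated signs rather than its negative.
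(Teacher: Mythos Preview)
Your route through \ref{inthm:firstformula} at $b=0$ only reaches the special case of the proposition in which the last two entries of $A$ are $(b,-b)$, here $(0,0)$. In the setup of \ref{inthm:firstformula} the slopes at $p_{n-1},p_n$ are forced to be $b$ and $-b$; they are not the arbitrary $a_{n-1},a_n$ appearing in \ref{inprop:splitpsi}. Your step (4), passing along a forgetful map $\Mbar_{g,n+2}\to\Mbar_{g,n}$, does not repair this: the correct relation is $\DR(A,0,0)=\pi^*\DR(A)$ (pullback, not pushforward), and applying the projection formula to the codimension-$1$ identity $\LLogDR(A,0,0)\cdot\Phi(\delta_{A,0,0}^\theta)=0$ along a relative-dimension-$2$ map kills the divisor class and yields the trivial relation $0=0$. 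In particular there is no way to produce the term $a_{n-1}\psi_{n-1}-a_n\psi_n$ with nonzero $a_{n-1},a_n$ from $\delta_{A,0,0}^\theta$, whose slopes along the last two legs are both zero. The paper indeed remarks that \ref{inthm:firstformula} only yields a special case of \ref{inprop:splitpsi}.

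The paper's proof avoids gluing and extra markings entirely. It works directly on $\Mpt_{g,n}$ with the given $A\in\Z^n$ and sets $\delta=\alpha_A^\theta(p_n)-\alpha_A^\theta(p_{n-1})$. Because the sections of a pierced curve are logarithmic, there are evaluation maps $\ev_{n-1},\ev_n:\LLogDRL(A)\to\G_\log$ already, with no gluing required; on $\LLogDRL(A)$ the PL function $\delta$ lifts to $\ev_n/\ev_{n-1}:\LLogDRL(A)\to\G_\log$, and since the universal PL function $x$ on $\G_\log$ satisfies $\Phi(x)=0$ one gets $\LLogDR(A)\cdot\Phi(\delta)=0$. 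Expanding $\Phi(\delta)$ over the rays of $\tilde{\Mpt}_{g,n}^\trop$ then gives the leg contribution $\Phi(a_n\ell_n-a_{n-1}\ell_{n-1})=a_{n-1}\psi_{n-1}-a_n\psi_n$ via \cite[Lemma~4.3]{Holmes2023LogarithmicCohomologicalFT} and the banana-graph contributions with the stated signs and multiplicities. The fix to your argument is therefore to drop the detour through $(A,0,0)$ and the forgetful map, and to use the evaluation maps on $\LLogDRL(A)$ itself.
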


\subsection{Overview of the paper}
In \ref{sec:background} we recall some basics on log schemes, log stacks, log intersection theory and log Jacobians. In \ref{subsec:back:pierced} we summarise the necessary background from \cite{Holmes2023LogarithmicCohomologicalFT}. In \ref{sec:pierceddr} we define the pierced double ramification cycle and show some simple properties. In \ref{sec:pullbacklogDR} we prove the logarithmic pullback formula \ref{inthm:firstformula}, and in \ref{sec:pullbackDR} we use this logarithmic formula to obtain \ref{inthm:secondformula} and \ref{inprop:splitpsi}.

\subsection{Acknowledgements}
I would like to thank Dan Abramovich, Leo Herr, Thibault Poiret, Georgios Politopoulos and Johannes Schmitt for useful discussions. I am very grateful to David Holmes for his many helpful comments. I thank Georgios Politopoulos and Martin M\"oller for encouraging to include the twisted double ramification cycle. And last, the results of Dimitri Zvonkine \cite{zvonkine} formed the main inspiration for this paper, providing useful insights in the classical double ramification cycle and how gluing maps and tangencies interact.

The author is supported by NWO grant VI.Vidi.193.006 and ERC Consolidator Grant FourSurf 101087365.

\section{Background}
\label{sec:background}

\subsection{Log schemes}
In this paper, we mostly work in the category $\LogSch = \LogSch^\fs$ of fs (fine and saturated) log schemes over a point with the trivial log structure.

A log scheme is fs if the log structure is locally pulled back from a chart $\Z[P]$ where $P$ is a finitely generated, integral, saturated monoid. Pierced curves (see \ref{subsec:back:pierced}) are \emph{not} fs.

\begin{definition}
We say a log scheme $X$ is quasi-fs, or qs, if it has local charts $\Z[P]$ where $P$ is integral, saturated and $P^\gp$ is finitely generated.

We let $\LogSch^\qs$ denote the category of qs log schemes.
\end{definition}

In \ref{subsec:back:pierced} we will see pierced curves are qs. Qs monoids naturally appear when extending sharp maps of monoids.
\begin{example}
Let $M$ be a sharp monoid, so $M = \ol{M}$. Take $Q = M \oplus \N$, and consider a sharp section $p: Q \to M$ sending $(0,1)$ to $\ell \in M$ with $\ell \ne 0$. Then there is a unique maximal monoid $P$ with $Q \subset P \subset Q^\gp$ such that $p$ extends to a sharp morphism $P \to M$.

We have \[P = \{(m,a) \in M \oplus \Z : (m,a) = 0 \text{ or } m + a\ell > 0\}\]

This monoid is qs, but not fs.
\end{example}

% All fiber products/fiber squares in this paper are in $\LogSch^\qs$. Fiber products of fs log schemes coincide with their fiber products in $\LogSch^\fs$.
All fiber products/fiber squares in this paper are in $\LogSch^\fs$.

\subsection{Log stacks}
\begin{definition}
An \emph{algebraic log stack (or space)} is an algebraic stack (or space) equipped with a log structure. 
\end{definition}

\begin{definition}
A \emph{log stack} is a stack fibred in groupoids over $\LogSch$ for the strict \'etale topology that admits a log \'etale cover by a log algebraic stack, such that the diagonal is representable by log algebraic spaces.
\end{definition}

\begin{definition}
A log stack $Y$ is \emph{dominable} if there is a log blowup $X \to Y$ such that $X$ is a log algebraic stack.
An algebraic log stack $X$ is locally free if every stalk of $\Mbar_X$ is isomorphic to $\N^r$ for some $r$.
\end{definition}

\begin{example}
\label{ex:glog}
Any algebraic log stack is a log stack. The following non-algebraic log stacks play an important role in this paper.
\begin{itemize}
\item The log stacks $\G_\log$ and $\G_\trop$, defined by
\[
  \G_\log: X \mapsto \M_X(X), \G_\trop: X \mapsto \ghost_X(X).
\]
The subfunctors
\[
X \mapsto \{m \in \M_X(X) : m \leq 1 \text{ or } m \geq 1 \}, X \mapsto \{m \in \ghost_X(X) : m \leq 0 \text{ or } m \geq 0 \} 
\]

are representable by $\P^1$ and $[\P^1/\G_m]$ respectively, where $\P^1$ has its toric log structure. The corresponding maps $\P^1 \to \G_\log, [\P^1/\G_m] \to \G_\trop$ are log blowups.
\item The logarithmic Jacobian $\LogJ(C/S)/S$ of a log curve $C/S$ \cite{Molcho2018The-logarithmic}. This parametrises \emph{log line bundles} on $C/S$. See \ref{subsec:back:jac} for more on this stack.
\end{itemize}
\end{example}

\subsection{Logarithmic Chow}
\label{sub:logarithmic_chow}

The logarithmic Chow ring of a smooth log smooth algebraic log DM stack was first introduced in \cite{Holmes2017Multiplicativit}, and further developed in \cite{Holmes2021Logarithmic-int,Holmes2022Logarithmic-double}. For $X$ a smooth log smooth DM stack they define 
\begin{equation}
\label{eq:defhps}
\LogCH_{\text{HPS}}^*(X) = \colim_{\tilde{X} \to X} \CH^*(\tilde{X}). 
\end{equation}
where the colimit is taken over all smooth log blowups, and $\CH^*$ is Vistoli's Chow ring with $\Q$-coefficients as in \cite{Vistoli1989Intersection-th}. The transition maps are simply the pullback maps $\pi^*: \CH(\tilde{X} \to \hat{X})$ for a blowup $\tilde{X} \to \hat{X}$.

The stack $\Mpt_{g,n}$ is not log smooth, only idealised log smooth. It has log blowups of higher dimension, and not every blowup is dominated by a smooth blowup. Beyond log smooth schemes, there is both a notion of homological $\LogCH_*(X)$ and bivariant $\LogCH^*(X)$, due to \cite{Barrott2019Logarithmic-Cho}.
\begin{definition}[{\cite{Barrott2019Logarithmic-Cho}}]
Let $X$ be a dominable log stack. Then its homological log Chow group is
\[
\LogCH_*(X) = \colim_{\tilde{X} \to X} \CH_*(\tilde{X})
\]
where the transition maps along $\pi: \tilde{X} \to \hat{X}$ are the virtual dimension $0$ Gysin pullback maps $\pi^!$ constructed in \cite[Construction~2.4]{Barrott2019Logarithmic-Cho}, and $\CH_*$ denotes Kresch's Chow groups \cite{Kresch1999Cycle-groups-fo}.

Its cohomological log Chow ring
\[
\LogCH^*(X)
\]
is the ring of bivariant classes on $X$ \cite[Definition~2.20]{Barrott2019Logarithmic-Cho}.
\end{definition}

For a more in depth discussion on the homological Chow ring, see \cite{pandharipande2024logtautrings}. For a comparison between the bivariant classes and \ref{eq:defhps}, see \cite[Appendix A]{Holmes2023LogarithmicCohomologicalFT}.

%Equivalently, the functor of points is diagrams
% \[
% \begin{tikzcd}
% C \arrow[r] \arrow[d] & \mathsf{pt} \arrow[d] \\
% S \arrow[r] & \B \G_\log
% \end{tikzcd}
% \]
% Then we have a natural map $\LLogDRL_g(A) \to \B \G_\log$ and evaluation maps $\LLogDRL_g(A) \to \pt$ together with a commutative diagram
% \[
% \]

\subsection{Pierced curves}
\label{subsec:back:pierced}

Pierced curves were first defined in \cite[Section~5]{Holmes2023LogarithmicCohomologicalFT}. In this section we summarise the definitions and results needed for this paper.

We first remind the reader of the definition of a log curve
\begin{definition}[\cite{Kato2000Log-smooth-defo}]
A \emph{log curve} is a morphism of log algebraic spaces $C \to S$ that is proper, integral, saturated, log smooth, with geometric fibers that are reduced, connected and of pure dimension $1$.
\end{definition}

A log curve $C/S$ has a locus of non-verticality, where $\ghost_{C,p} = \ghost_S \oplus \N$. Locally on $S$ this is a disjoint union of schematic sections, but these schematic sections do not in general lift to logarithmic sections.

\begin{definition}[\protect{\cite[Definition~3.1]{Holmes2023LogarithmicCohomologicalFT}}]
A \emph{log pointed curve} is a tuple $(C/S,p_1,\dots,p_n)$ where
\begin{enumerate}
  \item $C/S$ is a log curve;
  \item the $p_i$ are sections $S \to C$ of log schemes, landing in the locus where $C/S$ is classically smooth;
  \item $C/S$ is vertical exactly outside the image of the sections.
\end{enumerate}

We define the \emph{length} $\ell_i \in \ghost_S$ of $p_i$ to be the image of $(0,1) \in \ghost_{C,p_{i}} = \ghost_S \oplus \N$ under $p_i^*: \ghost_{C,p_i} \to \ghost_S$.

The moduli space of log $n$-pointed curves of genus $g$ is denoted $\Mpt_{g,n}$
\end{definition}

Per \cite[Lemma~3.7]{Holmes2022Logarithmic-double} we have that $\ell$ maps to $0 \in \Ocal_S/\Ocal_S^\times$ under the monoid morphism $\ghost_S \to \Ocal_S/\Ocal_S^\times$.

\begin{theorem}[\protect{\cite[Theorem~A]{Holmes2023LogarithmicCohomologicalFT}}]
The stack $\Mpt_{g,n}$ is an algebraic stack with logarithmic structure. The forgetful map $\Mpt_{g,n} \to \Mbar_{g,n}$ is an isomorphism of underlying algebraic stacks. There are natural gluing maps
\begin{align*}
\Mpt_{g_1,n_1 + 1} \times \Mpt_{g_2,n_2+1} &\to \Mpt_{g_1+g_2,n_1+n_2}\\
\Mpt_{g,n} &\to \Mpt_{g,n-1}
\end{align*}
compatible with the classical gluing maps.
\end{theorem}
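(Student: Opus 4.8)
The plan is to identify $\Mpt_{g,n}$ with the algebraic stack $\Mbar_{g,n}$ carrying a canonical, non-fs enlargement of its standard boundary log structure, and then to construct the gluing maps by an explicit modification of the universal pierced curve along the glued branches.

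\textbf{Algebraicity and the forgetful map.} By the theory of log curves, $\Mbar_{g,n}$ with its standard (Deligne--Mumford boundary) log structure $\M^{\mathrm{std}}$ is the moduli stack of log curves equipped with $n$ disjoint branches of non-verticality and stable underlying curve; forgetting the logarithmic sections of a log pointed curve produces such an object, giving a forgetful morphism of log stacks $\Mpt_{g,n}\to(\Mbar_{g,n},\M^{\mathrm{std}})$. I would analyse this morphism along each branch of non-verticality: there the log curve is étale-locally $\Spec\Ocal_S[x]$ with $\ghost_{C}=\ghost_S\oplus\N$, and a logarithmic section along it is precisely the datum of a \emph{length} $\ell_i\in\ghost_S$ (necessarily mapping to $0$ in $\Ocal_S/\Ocal_S^\times$), the logarithmic structure of $C$ near such a section being the one with chart the quasi-fs monoid $P=\{(m,a)\in\ghost_S\oplus\Z : (m,a)=0\ \text{or}\ m+a\ell_i>0\}$ of \ref{sec:background}. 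Thus $\Mpt_{g,n}$ is obtained from $\Mbar_{g,n}$ by freely adjoining $n$ length parameters: it is represented by $\Mbar_{g,n}$ with the log structure associated to $\M^{\mathrm{std}}\oplus\N^{n}$, the $i$-th standard generator mapping to $0$. This is a log structure on an algebraic stack, so $\Mpt_{g,n}$ is algebraic with log structure (and only idealised log smooth over the point, the $\N^n$-summand not being log smooth), and the forgetful map to $\Mbar_{g,n}$ is the identity on underlying stacks. The same local recipe modifies the universal curve of $\Mbar_{g,n}$ near its sections to produce the universal pierced curve $\Cpt_{g,n}\to\Mpt_{g,n}$, which one checks is a log curve with logarithmic sections.

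\textbf{The gluing maps.} For the separating gluing map I would take two pierced log curves $(C_1/S,\dots,q_1)$ and $(C_2/S,\dots,q_2)$ over a common base $S$, with leg lengths $\ell_{q_1},\ell_{q_2}\in\ghost_S$, and form the underlying nodal curve $\underline C=\underline C_1\sqcup_{\underline q_1\sim\underline q_2}\underline C_2$ with its new node $N$. The key point is to equip $\underline C$ with a log-curve structure agreeing with $C_1$ and $C_2$ away from $N$ and having \emph{node length} $\ell_{q_1}+\ell_{q_2}$ at $N$; this is built by amalgamating the two local charts of $C_1$ near $q_1$ and $C_2$ near $q_2$ along their $(0,1)$-generators, once more a quasi-fs monoid. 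One then checks that this glues to a genuine log curve $C/S$ with underlying curve $\underline C$, that the remaining markings stay logarithmic sections, that the genus is $g_1+g_2$, and that the whole construction is functorial in $S$; this gives the map $\Mpt_{g_1,n_1+1}\times\Mpt_{g_2,n_2+1}\to\Mpt_{g_1+g_2,n_1+n_2}$. The second gluing map — gluing two markings of a single genus-$g$ curve, raising the genus by one — is constructed identically, with node length the sum of the two corresponding leg lengths. Compatibility with the classical gluing maps of $\Mbar$ is then immediate, since the underlying curve produced is by construction the classical glued curve.

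\textbf{The main obstacle.} The serious step is the construction of the logarithmic structure at the glued node — equivalently, the amalgamation of the two local monoids — together with the verification that the result is really a log curve (so that the glued object is a pierced log curve) and that the construction is functorial. One cannot appeal to the usual formalism of fs fibre products here, since pierced curves are only quasi-fs (indeed not fs: the monoid $P$ above need not be finitely generated), so one works throughout in $\LogSch^{\qs}$ and checks by hand that the relevant monoid pushout exists, is saturated with finitely generated groupification, and supports a logarithmic structure that is log smooth away from an idealised locus. The precise statement that leg lengths add to node lengths is exactly the identity that makes the tropical picture (gluing two legs produces an edge of length their sum) work, and on which the later pullback formulas rest, so it is this compatibility that one must establish with care.
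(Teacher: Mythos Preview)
This theorem is not proved in the present paper: it is quoted verbatim as background from \cite{Holmes2023LogarithmicCohomologicalFT} (their Theorem~A), with no argument supplied here. There is therefore no ``paper's own proof'' to compare your proposal against.

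That said, your sketch is a reasonable outline of the argument in the cited reference: identifying $\Mpt_{g,n}$ with $\Mbar_{g,n}$ carrying the log structure obtained by adjoining $n$ leg-length generators (each mapping to $0$ in $\Ocal/\Ocal^\times$, hence idealised rather than log smooth), and building the gluing by amalgamating the local monoids at the two marked points so that the new node acquires length $\ell_{q_1}+\ell_{q_2}$. Your identification of the main difficulty --- that the amalgamation must be carried out in $\LogSch^{\qs}$ rather than $\LogSch^{\fs}$, and that one must verify by hand that the pushout monoid is integral and saturated with finitely generated groupification --- is exactly the point stressed in the source. One small caution: the gluing in \cite{Holmes2023LogarithmicCohomologicalFT} is actually phrased via the \emph{piercing} $\pie{C}$ and a coproduct/pushout of pierced curves (cf.\ \ref{thm:gluing} here), rather than directly patching log structures on the underlying nodal curve; the two descriptions are equivalent, but the pierced-curve formulation is what yields the universal property you will need later.
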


Tropically, a pointed curve is roughly a metric graph where the leg is also given a length. Gluing two legs then corresponds to creating an edge with length the sum of the two leg lengths. This tropical perspective is worked out in \cite[Appendix~B]{Holmes2023LogarithmicCohomologicalFT}.

\begin{definition}[{\cite[Definition~5.1]{Holmes2023LogarithmicCohomologicalFT}\footnote{Condition \oref{cond:changed} is slightly updated from loc. cit. This will also be updated in the next version of loc. cit. The gluing result \cite[Theorem~5.10]{Holmes2023LogarithmicCohomologicalFT} holds for either definition}}]
Let $C/S$ be a pointed curve. The \emph{piercing} of $C/S$ at $(p_1,\dots,p_n)$ is the unique log scheme $\pie{C}/C$ such that
\begin{enumerate}
  \item $\ul{\pie{C}} \to \ul{C}$ is an isomorphism;
  \item $\pie{C} \to C$ is an isomorphism on $C \setminus \{p_1,\dots,p_n\}$;
  \item \label{cond:changed} at $p_i$, we have \[ \M_{C,p_i} = \M_S \oplus \N \subset \M_{\pie{C},p_i} = \{(m,a) : a \geq 0 \text{ or } \alpha(p_i^*(m,a)) = 0\} \subset \M_S \oplus \Z.\]
\end{enumerate}
\end{definition}

The gluing of pointed curves can be stated in terms of a coproduct of pierced curves. This gives rise to the following universal property that plays a key role in this paper.

\begin{theorem}[{\cite[Theorem~5.10]{Holmes2023LogarithmicCohomologicalFT}}]
\label{thm:gluing}
Let $C$ be a possibly disconnected pointed curve with two markings $p_1, p_2$. Let $C^\gl$ be the gluing of $C$ at $p_1,p_2$. Let $X$ be a log stack. There is a natural map $\pie{C} \to C^\gl$ that induces a bijection
\begin{align*}
\Hom(C^\gl, X) \to \Hom_{\gl}(\pie{C}, X) &= \Bigg\{f \in \Hom(\pie{C},X) \hspace{4pt}\Bigg\vert \begin{split}& f \circ p_1 = f \circ p_2 \\ &\text{slopes of } f \text{ at } p_1,p_2 \text{ add to } 0\end{split}\Bigg\} \\
\end{align*}
\end{theorem}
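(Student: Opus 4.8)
### Proof proposal for Theorem \ref{thm:gluing}

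The plan is to prove the bijection by working étale-locally on $S$ (the base of $C$) and then building the map $\pie{C} \to C^\gl$ explicitly on charts, where the combinatorics of the monoids becomes transparent. First I would reduce to the case where $S$ is the spectrum of a strictly henselian local ring, so that the curve $C$ has a standard description: away from the two markings $p_1, p_2$ it is vertical, and near $p_i$ we have $\M_{C,p_i} = \M_S \oplus \N$ with the $\N$-generator having image the length $\ell_i \in \ghost_S$. The glued curve $C^\gl$ then has, near the new node $q$, characteristic monoid of the form $\M_S[\delta_q]$ obtained by adjoining an edge parameter $\delta_q$ subject to $\delta_q \cdot \delta_q' = $ (the smoothing parameter), with the two branches carrying the $\N$-coordinates from $p_1$ and $p_2$ respectively. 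The key point is that $\ell_1 + \ell_2$ equals the smoothing parameter of the node $q$ in $C^\gl$ (this is exactly the tropical statement that gluing legs of lengths $\ell_1, \ell_2$ produces an edge of length $\ell_1 + \ell_2$).

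Next I would construct the natural morphism $\pie{C} \to C^\gl$. On underlying schemes this is just the classical gluing $\ul{C} \to \ul{C}^\gl$ (which identifies $p_1$ with $p_2$); recall $\ul{\pie C} = \ul C$. On log structures, away from the markings the map $\pie C \to C$ is an isomorphism and the gluing is a strict map, so there is nothing to check. At the node $q$, I claim the pulled-back characteristic $\M_{C^\gl, q}$ maps into $\M_{\pie C}$ at $p_1$ and at $p_2$: the branch coordinate $\delta_q$ restricted to the $p_1$-branch is a nonnegative multiple of the local coordinate, and on the $p_2$-side one gets $(\ell_1+\ell_2) - (\text{the }p_1\text{-branch coordinate})$, which lands in the pierced monoid $\{(m,a) : a \geq 0 \text{ or } \alpha(p_i^*(m,a)) = 0\}$ precisely because of condition~\ref{cond:changed} in the definition of piercing — the elements with negative $a$-component are exactly those whose image has vanishing $\alpha$, i.e.\ those supported on the interior of the edge. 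This is where the definition of pierced curve is designed to make the construction work, and it is the main conceptual content.

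Given the map $\pie C \to C^\gl$, composition defines $\Hom(C^\gl, X) \to \Hom(\pie C, X)$, and I must check (a) its image lands in $\Hom_\gl(\pie C, X)$ and (b) it is a bijection onto that subset. For (a): a map $f: \pie C \to X$ factoring through $C^\gl$ automatically satisfies $f \circ p_1 = f \circ p_2$ since $p_1, p_2$ become the same section $q$ of $C^\gl$; and the slope condition at $p_1, p_2$ is forced because at the node $q$ the two branch slopes of any log map to $X$ must be compatible with the single edge parameter $\delta_q$ — concretely, the slope along one branch is the negative of the slope along the other once we orient the edge, giving slopes summing to zero. For (b): given $f \in \Hom_\gl(\pie C, X)$, I construct the factorization through $C^\gl$. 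On underlying schemes this is the universal property of the pushout $\ul C^\gl = \ul C \sqcup_{p_1 \cong p_2, \ul S} \ul S$ in schemes (using $f \circ p_1 = f \circ p_2$ on underlying maps). For the log structure, I need to produce a map $\M_{C^\gl} \to \M_{\pie C}$ (as part of a log structure pullback) compatible with $f^*$; away from $q$ this is the inverse of the isomorphism already present, and at $q$ the slope-sum-zero condition is exactly what is needed to glue the two branch data of $f$ into a well-defined assignment on the edge parameter $\delta_q$, using that an element of $\M_{C^\gl,q}$ is determined by its restrictions to the two branches together with the compatibility $\delta_q + \delta_q' = \ell_1 + \ell_2$. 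Uniqueness of the factorization follows because $\pie C \to C^\gl$ is an isomorphism over the dense open $C \setminus \{p_1, p_2\}$, so any two factorizations agree there, hence everywhere by separatedness of $X$.

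The step I expect to be the main obstacle is (b), the surjectivity: carefully checking that the branch data of a slope-balanced map $f$ genuinely assemble into a log map on $C^\gl$ rather than merely on some modification. This requires that the local model of $\M_{C^\gl,q}$ really is the amalgamated sum of the two pierced local models along the shared $\M_S$ — which uses both the maximality in the piercing construction (Example following the qs definition, applied at each $p_i$) and the identity $\ell_1 + \ell_2 = $ smoothing parameter. Once the local charts are matched up, functoriality in $X$ and gluing the local factorizations into a global one is routine by the descent/strict-étale-local nature of log structures, and disconnectedness of $C$ causes no trouble since only the two markings $p_1,p_2$ interact.
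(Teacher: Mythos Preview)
This theorem is not proved in the present paper: it is quoted verbatim from \cite[Theorem~5.10]{Holmes2023LogarithmicCohomologicalFT} as background (see \ref{subsec:back:pierced}), and no argument for it appears anywhere in the text. There is therefore nothing in this paper to compare your proposal against.

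That said, your sketch is broadly along the right lines and matches the informal description the paper does give: the authors state that ``the gluing of pointed curves can be stated in terms of a coproduct of pierced curves,'' i.e.\ that $C^\gl$ is a pushout of the diagram $S \rightrightarrows \pie{C}$ given by the two sections $p_1,p_2$, which is exactly the universal property you are trying to verify by hand. Two small points to tighten. First, your uniqueness argument invokes ``separatedness of $X$,'' but $X$ is an arbitrary log stack; the correct argument is that $\pie{C} \to C^\gl$ is an epimorphism of (qs) log stacks --- on underlying spaces it is a scheme-theoretic quotient, and on ghost sheaves the map at the node is surjective onto each branch stalk --- so any factorization is unique without hypotheses on $X$. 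Second, in step (b) you should be explicit that the pierced monoid at $p_i$ is precisely the submonoid of $\M_S^\gp \oplus \Z$ generated by $\M_S \oplus \N$ together with those $(m,a)$ with $a<0$ and $p_i^*(m,a)$ mapping to zero in $\Ocal_S/\Ocal_S^\times$; this maximality is what guarantees that the two branch restrictions of an element of $\M_{C^\gl,q}$ both land in the pierced monoids, and conversely that any slope-balanced pair of pierced sections glues. With those fixes your outline would constitute a proof, but for the actual details you should consult the cited reference rather than this paper.
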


%Let $\pi: \Cpt_{g,n} \to \Mpt_{g,n}$ denote the universal pierced curve. If $n \geq 2$, write $\Cpt_{g,n}^\gl$ for the gluing of $\Cpt_{g,n}$ at $p_{n-1},p_n$.

\begin{remark}
\label{rem:piercedvspunctured}
Given a pierced curve, one can associate a punctured curve, see \cite[Section~3.2]{Holmes2023LogarithmicCohomologicalFT}. Punctured and pierced curves both admit stable maps with negative tangencies, but punctured curves or punctured stable maps cannot be glued.
\end{remark}

\subsection{Logarithmic Jacobians and stability conditions}
\label{subsec:back:jac}

Let $C/S$ be a log curve with no markings. Then its log Jacobian $\LogJ(C)/S$ \cite{Molcho2018The-logarithmic} is a logarithmic abelian variety and in particular is proper. It is a compactification of the semi-abelian variety $\Jac^{\ul{0}}(S)$ of multidegree $0$ line bundles, and parametrises \emph{log line bundles}, $\G_\log$-torsors. By extension of constants from $\G_m$ to $\G_\log$, there is a map $\Jac^0(S) \to \LogJ(C)/S$, which is in general neither injective nor surjective.

Recall that the short exact sequence 
\[
  \Ocal_C^* \to \M_{C}^\gp \to \ghost_C^\gp
\]
induces the sequence
\[
\PL(C) \xrightarrow{\alpha \mapsto \Ocal_C(\alpha)} \Jac^0(C) \to \LogJ(C)
\]
exact at $\Jac^0(S)$.

For a log curve with markings, one needs to restrict to piecewise linear functions with slope $0$ on the legs. The piecewise linear function $\alpha$ with slopes $a_i$ at leg $i$, and slopes $0$ at internal edges has line bundle $\Ocal_C(\alpha) = \Ocal_C(\sum_i a_i p_i)$, which is not necessarily trivial as a log line bundle.

All in all, this gives us the following lemma.
\begin{lemma}[{\cite{Molcho2018The-logarithmic}}]
\label{lem:loglinebundlekern}
Let $C/S$ be a log curve. The kernel of $\Jac^0(S) \to \LogJ(C)/S$ is given by line bundles of the form $\Ocal_C(\alpha)$, where $\alpha$ is a piecewise linear function on $C/S$ with slope $0$ at the legs.
\end{lemma}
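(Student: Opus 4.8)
The plan is to identify the kernel of the extension-of-constants map $\Jac^0(S) \to \LogJ(C)/S$ by unwinding the definition of $\LogJ$ as the stack of $\G_\log$-torsors and tracing a line bundle through the exact sequence induced by $\Ocal_C^* \to \M_C^\gp \to \ghost_C^\gp$. First I would recall that a line bundle $L \in \Jac^0(C)$ maps to zero in $\LogJ(C)$ precisely when the associated $\G_\log$-torsor $L \otimes_{\G_m} \G_\log$ is trivial, i.e.\ admits a global section. Since $\G_\log$ represents $X \mapsto \M_X(X)$, a trivialisation of the pushed-forward torsor is the same data as a way of writing $L$ in the form $\Ocal_C(\text{something})$ coming from $\M_C^\gp$; concretely, it is a section of $\M_C^\gp$ whose image in $\Ocal_C^*$-torsors recovers $L$ and whose image in $\ghost_C^\gp$-torsors is the torsor corresponding to a global piecewise linear function. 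This is exactly the content of the sequence
\[
\PL(C) \xrightarrow{\alpha \mapsto \Ocal_C(\alpha)} \Jac^0(C) \to \LogJ(C)
\]
being a complex, exact at $\Jac^0(C)$, which I would take from \cite{Molcho2018The-logarithmic}; the lemma is then essentially a restatement, once we account for the markings.

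The second step is to handle the markings: for a log curve with sections $p_i$, one must restrict to piecewise linear functions $\alpha$ with slope $0$ along the legs, since only these give line bundles of multidegree $\ul 0$ that are genuinely in the image of the relevant part of $\M_C^\gp$ over the non-vertical locus. I would argue that a piecewise linear function with nonzero slope $a_i$ at a leg yields $\Ocal_C(\alpha) = \Ocal_C(\sum_i a_i p_i)$, which has nonzero degree contribution at $p_i$ and hence is not in $\Jac^0$ in the relevant sense (or, more precisely, is already accounted for and does not lie in the connected component we are comparing). So the kernel is generated by $\Ocal_C(\alpha)$ for $\alpha \in \PL(C)$ with slope $0$ at all legs, which is precisely the statement.

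The main obstacle I expect is bookkeeping rather than conceptual: making precise the passage from ``$\G_\log$-torsor is trivial'' to ``$L = \Ocal_C(\alpha)$ for a global PL function'' in families over a general base $S$, where $\ghost_C$ need not be globally $\N^r$ and where one must be careful that $\alpha$ is a genuine global piecewise linear function on $C/S$ (not just étale-locally so) and that the identification is compatible with the exactness at $\Jac^0(S)$ noted just before the lemma. Since all of this is packaged in \cite{Molcho2018The-logarithmic}, I would phrase the proof as: cite the exactness of the displayed sequence at $\Jac^0(C)$, observe that the map $\Jac^0(S) \to \LogJ(C)/S$ factors through $\Jac^0(C)$ via pullback along $S \to C$ along a section (or directly via base change), and conclude that the kernel consists exactly of the classes $\Ocal_C(\alpha)$ with $\alpha$ piecewise linear of slope $0$ on the legs.
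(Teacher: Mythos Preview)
Your proposal is correct and follows essentially the same approach as the paper: the paper does not give a formal proof of this lemma but rather presents it as a consequence of the preceding discussion, which recalls the sequence $\PL(C) \to \Jac^0(C) \to \LogJ(C)$ (exact at $\Jac^0$) induced by $\Ocal_C^* \to \M_C^\gp \to \ghost_C^\gp$, and then notes that for marked curves one must restrict to piecewise linear functions with slope $0$ on the legs since $\Ocal_C(\sum_i a_i p_i)$ is not necessarily trivial as a log line bundle. Your plan reconstructs exactly this argument, with the same citation to \cite{Molcho2018The-logarithmic} for the exactness.
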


The logarithmic Jacobian is \emph{not} algebraic for curves that are not of compact type. It has algebraic log blowups, for example those given by small non-degenerate Kass-Pagani stability conditions $\theta$ (see \cite{Kass2017The-stability-s} and \cite[Section~1.6,4]{Holmes2022Logarithmic-double}). These compactified Jacobians parametrise line bundles on log blowups (bubblings) of the original curve with $\theta$-stable multidegree. A line bundle of multidegree $0$, in particular the trivial line bundle, is always $\theta$-stable.

For a fixed stability condition $\theta$, and a line bundle $\Lcal$ on $C/S$, there is a piecewise linear function $\alpha^\theta$ (with slope $0$ on the legs, unique up to constants) such that $\Lcal_A(\alpha^\theta)$ is $\theta$-stable.

A stability condition induces a stability condition on strata. In particular, a stability condition on $\Mbar_{g,n}$ induces a stability condition on $\Mbar_{g-1,n+2}$. 

\section{Pierced logarithmic double ramification cycle}
\label{sec:pierceddr}

For $A \in \Z^n$ with sum $k(2g-2+n)$, the \emph{double ramification locus}
\[
\DRL_g(A) \subset \Mbar_{g,n}
\]
is the locus where the line bundle $\omega_{C}^{\log,\tensor -k}(\sum_i a_i p_i)$ on the curve is trivial. On the smooth locus, this consists of curves $C/S$ with a map $C \to \P^1$, with ramification profile over $0$ and $\infty$ specified by positive and negative $a_i$ respectively. The \emph{double ramification cycle} $\DR_g(A) \in \CH^g(\Mbar_{g,n})$ is the virtual fundamental class of a compactification.

The (logarithmic) double ramification cycle has seen many definitions and incarnations. The common definitions are either using maps to chains of $\P^1$'s \cite{Graber2003Hodge-integrals} (when $k = 0$), or resolutions of the Abel--Jacobi section $\Mbar_{g,n} \dashrightarrow \Jac_{g,n}^{\ul{0}}$ \cite{Holmes2017Extending-the-d}. We present an approach continuing on \cite{Marcus2017Logarithmic-com,Holmes2021Logarithmic-int} mixing the two, using the theory of logarithmic Jacobians and logarithmic intersection theory.

\begin{definition}
Fix $g,n \in \Z_{\geq 0},A \in \Z^n$ with $\sum a_i = k(2g-2+n)$. The log double ramification locus $\LogDRL_g(A)$ is defined as the log fiber product
\[
\begin{tikzcd}
\LogDRL_g(A) \arrow[r] \arrow[d,"j"] & \Mbar_{g,n} \arrow[d,"e"] \\
\Mbar_{g,n} \arrow[r, "\sigma_A"] & \LogJ_{g,n}
\end{tikzcd}
\]
where $e$ is the unit of $\LogJ_{g,n}$ and $\sigma_A$ sends $C/S$ to the log line bundle $\omega_{C}^{\log,\tensor -k}(\sum_i a_i p_i)$.

The log double ramification cycle $\LogDR_g(A) \in \LogCH_*(\Mbar_{g,n})$ is defined as $j_* e^![\Mbar_{g,n}]$.
\end{definition}

Equivalently, we have that $\LogDRL_g(A)$ is the locus where $\omega_{C}^{\log,\tensor -k}(\sum_i a_i p_i)$ is trivial as a \emph{log} line bundle.

\begin{remark}
As this definition is slightly different from literature, we show how to unpack it. The space $\LogDRL_g(A)$ is a proper algebraic log stack, and carries the virtual fundamental class $e^![\Mbar_{g,n}]$ of dimension $2g-3+n$. The map $\LogDRL_g(A) \to \Mbar_{g,n}$ is not a closed embedding on the underlying algebraic stacks. Instead, the map factors (non-canonically) as a closed embedding in a log blowup $\tilde{\Mcal}_{g,n}/\Mbar_{g,n}$. The pushforward of $[\LogDRL_g(A)]^\vfc$ to $\tilde{\Mcal}_{g,n}$ lives in $\CH^g(\tilde{\Mcal}_{g,n})$. Then the logarithmic double ramification cycle is the class of this pushforward inside $\LogCH^g(\Mbar_{g,n})$. 
\end{remark}

We will also need the pierced version of this story. 

\begin{definition}
\label{def:pierceddr}
Let $\pi: \Mpt_{g,n} \to \Mbar_{g,n}$ be the forgetful map. We define
\begin{align*}
\LLogJ_{g,n}/\Mpt_{g,n} &= \pi^*(\LogJ/\Mbar_{g,n}), \\
\LLogDRL_{g,n}/\Mpt_{g,n} &= \pi^*(\LogDRL/\Mbar_{g,n}), \\
\LLogDR(A) &= \pi^*\LogDR(A) \in \LogCH_{*}(\Mpt_{g,n}).
\end{align*}
\end{definition}

Note that by the residue map we have canonical trivialisations $\res: \omega_{C}^{\log,\tensor -k}|_{p_i} \isom \Ocal_S$.

The following functor of points definition of $\LLogDRL_g(A)$ is shown in \cite[Section~7]{Holmes2023LogarithmicCohomologicalFT}.
\begin{proposition}
\label{lem:funcpointsdr}
The functor of points of $\LLogDRL_g(A)$ is
\[
((C/S,p_1,\dots,p_n), P/C, f: C \to P)
\]
where $(C/S,p_1,\dots,p_n)$ is a log pointed curve, $P/C$ is the $\G_\log$-torsor induced by the line bundle $\omega_{C}^{\log,\tensor -k}$, and $f: C \to P$ is a section where the slope on the leg $p_i$ is $a_i$ and $\res(f(p_1)) = 1 \in \G_\log$.
\end{proposition}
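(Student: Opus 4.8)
The strategy is to unwind the logarithmic fibre product defining $\LLogDRL_g(A)$ and then compare the result with the functor of points worked out in \cite[Section~7]{Holmes2023LogarithmicCohomologicalFT}.

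First I would reduce to trivialising a single log line bundle. Since $\LLogDRL_g(A) = \pi^*\LogDRL_g(A)$ and $\pi\colon \Mpt_{g,n}\to\Mbar_{g,n}$ is an isomorphism on underlying stacks with $\LLogJ_{g,n}=\pi^*\LogJ_{g,n}$, pulling the defining Cartesian square of $\LogDRL_g(A)$ back along $\pi$ exhibits $\LLogDRL_g(A)$ as the log fibre product of $\sigma_A$ and the unit $e$, viewed now as maps $\Mpt_{g,n}\rightrightarrows\LLogJ_{g,n}$. As both $\sigma_A$ and $e$ are sections of the structure map $\LLogJ_{g,n}\to\Mpt_{g,n}$, an $S$-point of this fibre product is precisely the datum of a log pointed curve $(C/S,p_1,\dots,p_n)$ together with an isomorphism, in $\LLogJ(C/S)$, between the log line bundle $\omega_{C}^{\log,\tensor -k}(\sum_i a_i p_i)$ and the trivial one; equivalently, a nowhere-vanishing global section of the $\G_\log$-torsor $Q$ that this log line bundle defines.

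Next I would rewrite this section as a section of $P$ with prescribed slopes. Write $\omega_{C}^{\log,\tensor -k}(\sum_i a_i p_i)=\omega_{C}^{\log,\tensor -k}\tensor\Ocal_C(\alpha_A)$, where $\alpha_A\in\PL(C/S)$ has slope $a_i$ on the leg $p_i$ and slope $0$ on all internal edges; correspondingly $Q=P\tensor Q_{\alpha_A}$, where $P$ is the pushout of the $\G_m$-torsor of $\omega_{C}^{\log,\tensor -k}$ along $\G_m\hookrightarrow\G_\log$ and $Q_{\alpha_A}$ is the $\G_\log$-torsor of $\Ocal_C(\alpha_A)$. Tensoring with the canonical rational section $1_{\alpha_A}$ of $Q_{\alpha_A}$ then identifies nowhere-vanishing global sections of $Q$ with global sections $f\colon C\to P$ whose tropicalisation has slope $a_i$ along the leg $p_i$. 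This is the step where the \emph{pierced} structure is indispensable: by the piercing construction the $p_i$ are honest logarithmic sections carrying a length $\ell_i\in\ghost_S$, so a section of a $\G_\log$-torsor has a well-defined finite slope, and a well-defined value, at each $p_i$ — which fails both for ordinary log curves and for punctured curves (compare \ref{rem:piercedvspunctured}). Under the residue isomorphism $\res\colon\omega_{C}^{\log,\tensor -k}|_{p_1}\isom\Ocal_S$ the value $f(p_1)\in P|_{p_1}$ becomes an element of $\G_\log(S)$, and, since $f$ has slope exactly $a_1$ at $p_1$, it lies in $\G_m(S)\subset\G_\log(S)$; imposing $\res(f(p_1))=1$ then singles out a unique representative $f$. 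Matching these three pieces of data with the description in \cite[Section~7]{Holmes2023LogarithmicCohomologicalFT} gives the proposition.

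I expect the crux to be this middle step: checking that twisting by $1_{\alpha_A}$ is genuinely a bijection onto the trivialisations of $Q$, that a section of $P$ of slope exactly $a_1$ at $p_1$ really evaluates to an element of $\G_m(S)$ there, and that the normalisation $\res(f(p_1))=1$ rigidifies the data. I would verify all of these étale-locally on $S$ using the explicit local model $\M_{\pie{C},p_i}=\{(m,a)\in\M_S\oplus\Z : a\geq 0\text{ or }\alpha(p_i^*(m,a))=0\}$ from the definition of the piercing, which reduces each assertion to a direct monoid computation; compatibility with the unit section of $\LLogJ_{g,n}$ and with the residue trivialisations is then a bookkeeping check.
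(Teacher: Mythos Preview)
The paper does not prove this proposition; it simply records it as a consequence of \cite[Section~7]{Holmes2023LogarithmicCohomologicalFT} and moves on. Your proposal is therefore not competing with any argument in the paper --- you have written out a plausible sketch of what the cited reference establishes, and the overall shape (unwind the fibre product, pass from a trivialisation of the torsor of $\omega_C^{\log,\otimes -k}(\sum a_ip_i)$ to a section of $P$ with prescribed leg-slopes, rigidify via $\res(f(p_1))=1$) is the natural one and matches how the reference proceeds.

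One point worth tightening: when you say the normalisation $\res(f(p_1))=1$ rigidifies the data, you should be explicit about \emph{what} ambiguity is being removed. Two sections of $P$ with the same slopes at every leg differ by a global section of $\M_C^{\gp}$ with slope~$0$ on all legs; you want to argue that evaluation at $p_1$ identifies this automorphism group with $\G_m(S)$ (not all of $\G_\log(S)$), so that fixing the residue to~$1$ kills it exactly. This uses that the curve is connected and that a PL function with slope~$0$ on all legs is determined by its value at one leg up to elements that already act trivially on the log line bundle (compare \ref{lem:loglinebundlekern}). Your sketch gestures at this but does not quite say it.
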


\begin{definition}
\label{def:evmaps}
We write $\ev_i$ for the composition $\res \circ f \circ p_i: \LLogDRL_g(A) \to P|_{p_i} \to \G_\log$.
\end{definition}

\section{The pullback of the log double ramification cycle}
\label{sec:pullbacklogDR}

Fix $g \in \Z_{\geq 0}, n \in \Z_{\geq 2}$. We fix a non-degenerate small stability condition $\theta$ on $\Mpt_{g+1,n-2}$. This also induces a stability condition $\theta$ on $\Mpt_{g,n}$, as $\Mpt_{g,n}$ is a stratum of $\Mpt_{g+1,n-2}$. We fix $A \in \Z^{n-2}$ with sum $k(2(g+1)-2+(n-2)) = k(2g-2+n)$.

To state the formula, we first define the following piecewise linear functions on the universal curve $C/\Mpt_{g,n}$ and $\Mpt_{g,n}$.
\begin{definition}
\label{def:alphabtheta}
For $B \in \Z^n$ with sum $k(2g-2+n)$, let $\alpha_{B}^\theta \in \PL(C)$ be the unique piecewise linear function that satisfies
\begin{enumerate}
\item $\alpha_B^\theta$ has slope $b_i$ along leg $i$;
\item the line bundle $\omega_{C}^{\log,\tensor -k}(\alpha_B^\theta)$ is $\theta$-stable;
\item $\alpha_{B}^\theta(p_1) = 0$ (where $\alpha_B^\theta(p_1) = \alpha_B^\theta(v) + b_i \ell_i$ if leg $i$ is incident to vertex $v$).
\end{enumerate}
Write $\delta_B^\theta$ for $\alpha_B^\theta(p_{n}) - \alpha_B^\theta(p_{n+1})$, a piecewise linear function on $\Mpt_{g,n}$.
\end{definition}

\begin{lemma}
\label{lem:linebundlepl}
For $B \in \Z^n$ with sum $k(2g-2+n)$, the log line bundles corresponding to the two line bundles $\Ocal_C(\sum_i b_i p_i)$ and $\Ocal_C(\alpha_B^\theta)$ are isomorphic.
\end{lemma}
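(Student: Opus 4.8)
The plan is to reduce the claimed isomorphism of log line bundles to the statement that the piecewise linear function $\alpha_B^\theta - \sum_i b_i p_i$ (where I abusively write $\sum_i b_i p_i$ for the piecewise linear function with slope $b_i$ along leg $i$ and slope zero on all internal edges, normalized to vanish at $p_1$) defines a line bundle that becomes trivial as a log line bundle. Concretely, both $\Ocal_C(\sum_i b_i p_i)$ and $\Ocal_C(\alpha_B^\theta)$ are line bundles of multidegree depending only on the combinatorics, but their difference $\Ocal_C(\alpha_B^\theta - \sum_i b_i p_i)$ is of the form $\Ocal_C(\beta)$ for the piecewise linear function $\beta = \alpha_B^\theta - \sum_i b_i p_i$. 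The key observation is that $\beta$ has slope $0$ on every leg (the leg slopes $b_i$ cancel) and is a genuine piecewise linear function on $C/S$, so by \ref{lem:loglinebundlekern} the line bundle $\Ocal_C(\beta)$ lies in the kernel of $\Jac^0(S) \to \LogJ(C)/S$; that is, it is trivial as a log line bundle. Tensoring back, $\Ocal_C(\alpha_B^\theta)$ and $\Ocal_C(\sum_i b_i p_i)$ have isomorphic associated log line bundles.

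First I would make precise the identification $\Ocal_C(\alpha_B^\theta) = \Ocal_C(\sum_i b_i p_i) \otimes \Ocal_C(\beta)$ at the level of line bundles on $C$ (not yet log line bundles), using the standard additivity $\Ocal_C(\alpha + \alpha') \cong \Ocal_C(\alpha) \otimes \Ocal_C(\alpha')$ for the map $\alpha \mapsto \Ocal_C(\alpha)$ from the exact sequence recalled in \ref{subsec:back:jac}, together with the fact that for the piecewise linear function whose only nonzero slopes are the $b_i$ along the legs one has $\Ocal_C(\sum_i b_i p_i)$ in the sense of the divisor $\sum_i b_i p_i$. Then I would check that $\beta := \alpha_B^\theta - \sum_i b_i p_i$ has slope $0$ along every leg: along leg $i$, $\alpha_B^\theta$ has slope $b_i$ by condition (1) of \ref{def:alphabtheta}, and the subtracted function has slope $b_i$ by construction, so the difference has slope $0$. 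Hence $\beta \in \PL(C)$ is a piecewise linear function with vanishing leg slopes, exactly the hypothesis of \ref{lem:loglinebundlekern}.

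Applying \ref{lem:loglinebundlekern}, the log line bundle associated to $\Ocal_C(\beta)$ is trivial (it lies in the kernel of $\Jac^0(S) \to \LogJ(C)/S$). Since passing from line bundles to log line bundles is the monoidal functor $\Jac^0 \to \LogJ$ of extension of structure group from $\G_m$ to $\G_\log$, it is compatible with tensor products, so the log line bundle of $\Ocal_C(\alpha_B^\theta)$ equals the log line bundle of $\Ocal_C(\sum_i b_i p_i)$ tensored with the trivial log line bundle, i.e.\ they are isomorphic. This proves the lemma.

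I do not expect a serious obstacle here; the only mild subtlety is bookkeeping around the normalization constant in $\alpha_B^\theta$ (condition (3), $\alpha_B^\theta(p_1) = 0$) versus the normalization of $\sum_i b_i p_i$, but since adding a (locally constant, pulled back from $S$) constant to a piecewise linear function changes $\Ocal_C(-)$ only by a line bundle pulled back from $S$, which is automatically trivial as a multidegree-$0$ log line bundle on a connected curve (or can be absorbed into the $\PL(S) \subset \PL(C)$ part), this does not affect the isomorphism class of the associated log line bundle. The one place to be slightly careful is that condition (2) of \ref{def:alphabtheta} ($\theta$-stability) plays no role in this particular statement — it only pins down $\alpha_B^\theta$ uniquely among piecewise linear functions with the prescribed leg slopes — so the proof should explicitly note that the isomorphism of log line bundles holds for \emph{any} choice of piecewise linear function with the correct leg slopes, of which $\alpha_B^\theta$ is one.
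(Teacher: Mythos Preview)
Your proposal is correct and follows essentially the same argument as the paper: define $\beta$ as the difference between $\alpha_B^\theta$ and the piecewise linear function with leg slopes $b_i$ and zero internal slopes, observe that $\beta$ has vanishing leg slopes, and invoke \ref{lem:loglinebundlekern}. The paper's proof is terser but structurally identical; your additional remarks on normalization and the irrelevance of condition~(2) are correct but not needed.
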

\begin{proof}
Recall that by \ref{lem:loglinebundlekern} a log line bundle is trivial if it is of the form $\Ocal_C(\beta)$ for $\beta$ a PL function with slopes $0$ on the legs. Write $\beta_B^\theta$ for the PL function on $C$ that agrees with $\alpha_B^\theta$ everywhere except the legs, and has slope $0$ on the legs. Then by construction $\Ocal_C(\alpha_B^\theta) = \Ocal_C(\sum_i b_i p_i) \tensor \Ocal_C(\beta_B^\theta)$.
\end{proof}

\begin{remark}
Note that $\alpha_B^\theta$ and $\delta_B^\theta$ are not strict piecewise linear. They are strict piecewise linear on (the universal curve over) $\LLogDRL_g(B)$. In \cite{Holmes2022Logarithmic-double} they give subdivisions of $\Mpt_{g,n}$ and the universal curve that support respectively $\alpha_A^\theta$ and $\delta_A^\theta$.
\end{remark}

We give some examples that exhibit the behaviour of $\delta_A^\theta$.

\begin{example}
We take $g = 0, n = 4$ and $A = (a_1,a_2,a_3,a_4)$. As we are working in genus $0$, a line bundle is $\theta$-stable if and only if it has multidegree $0$. \ref{fig:deltaalpha04} contains a picture of the four strata of $\Mbar_{0,4}$, corresponding to the origin and the three rays. Also pictured are the corresponding tropical curves, the slopes of the piecewise linear function $\alpha_A$, and the value of $\delta_A$. We see the value of $\delta_A$  depends on whether legs $3$ and $4$ are incident to the same vertex. If they are, we have $\delta_A^\theta = a_4 \ell_i - a_3 \ell_i$, and else we have $\delta_A^\theta = a_3 \ell_i - a_4 \ell_i + a\ell$, where $\ell$ is the length of the unique inner edge of the graph and $a$ is the slope along this edge (oriented from leg $3$ to leg $4$), determined by the multidegree $0$ condition.

\tikzmath{\r1 = 4; \lw =.6; \dx = .6; \dy = .9; \el = 1.8;}

\definecolor{mycolor}{RGB}{230,0,0}

\newcommand{\DrawGraph}[8]{%
    \begin{scope}[shift={(#5,#6)}]
        \begin{scope}[every node/.style={circle, draw,fill=black!50,inner sep=0pt, minimum width=4pt}] 
        \node (A) at (0,0) {};
        \node (B) at (\el,0) {};
        \end{scope}
        \node (A1) at (-\dx,\dy) [left] {#1};
        \node (A2) at (-\dx,-\dy) [left] {#2};
        \node (B1) at (\el+\dx,\dy) [right] {#3};
        \node (B2) at (\el+\dx,-\dy) [right] {#4};

        \draw[->] (A) -- node [pos=0.5, above, blue] {$#7$}(B);
        \draw[->] (A) -- node [pos=0.7, right, blue] {$a_{#1}$} (-\dx,\dy);
        \draw[->] (A) -- node [pos=0.7, right, blue] {$a_{#2}$} (-\dx,-\dy);
        \draw[->] (B) -- node [pos=0.7, left, blue] {$a_{#3}$} (\el+\dx,\dy);
        \draw[->] (B) -- node [pos=0.7, left, blue] {$a_{#4}$} (\el+\dx,-\dy);

        \node at (\el + \dx + .5, 0) [right, mycolor, text width = 2.7cm] {$\delta_{A} = #8$};
    \end{scope}
}

\begin{figure}
\centering

    \begin{tikzpicture}[scale=.9]

        \draw[->, line width = \lw] (0,0) -- (90:\r1);
        \draw[->, line width = \lw] (0,0) -- (210:\r1);
        \draw[->, line width = \lw] (0,0) -- (330:\r1);
        %\node at (\r1-.5,0) [right] {$\times \R^n$};

        \fill (90:\r1*.66) circle (1.5pt);
        \draw[dotted, line width = \lw] (90:\r1*.66) -- (2,3);

        \DrawGraph{1}{2}{3}{4}{3}{3}{a_3+a_4}{a_4 \ell_i - a_3 \ell_i};

        \fill (210:\r1*.8) circle (1.5pt);
        \draw[dotted, line width = \lw] (210:\r1*.8) -- (-5.2,-.4);

        \DrawGraph{1}{3}{2}{4}{-6.3}{.8}{a_2+a_4}{a_4 \ell_i - a_3 \ell_i + (a_2 + a_4) \ell};

        \fill (330:\r1*.8) circle (1.5pt);
        \draw[dotted, line width = \lw] (330:\r1*.8) -- (3.3,-1);

        \DrawGraph{1}{4}{2}{3}{3}{0}{a_2+a_3}{a_4 \ell_i - a_3 \ell_i  - (a_2 + a_3) \ell};

    \end{tikzpicture}
  \label{fig:deltaalpha04}
  \caption{The piecewise linear function $\delta_A$ on $\Mpt_{0,4}^\trop = \Mbar_{0,4}^\trop \times \R_{\geq 0}^4$. The cone complex in the middle is $\Mbar_{0,4}^\trop$. For each maximal stratum, a corresponding pierced tropical curve is shown. In blue are the slopes of $\alpha_A^\theta$, and in red the value of $\delta_A$ on this pierced tropical curve.}
\end{figure}
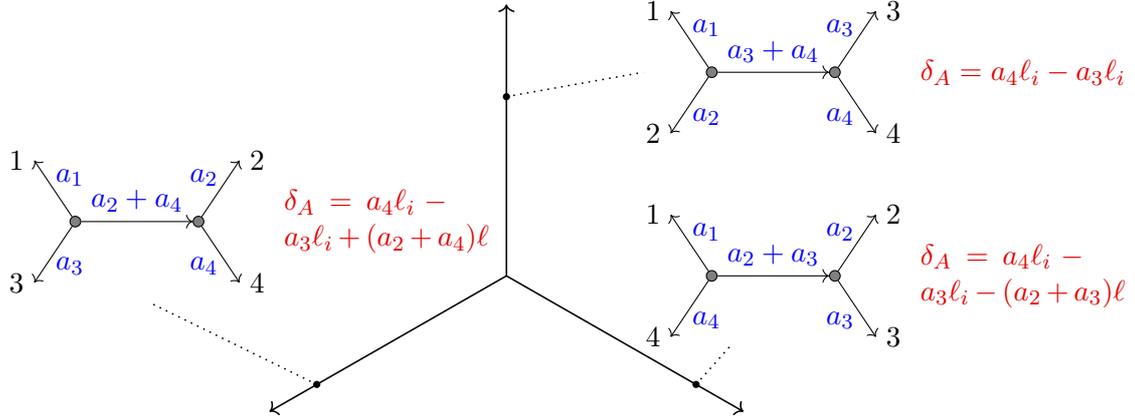

We see that $\delta_A^\theta$ depends on both the leg lengths and the lengths of the inner edges.
\end{example}

\begin{example}
We take $g = 1, n = 2$ and $A = (a,-a)$. Let $\Gamma$ be the graph with two vertices ${v_1,v_2}$, two edges $e_1,e_2$ connecting them, and with one leg $p_i$ each. If $a \geq 2$, then on (the universal curve over) $\Mpt_{\Gamma}$ the piecewise linear functions $\alpha_A^\theta,\delta_A^\theta$ are not strict and depend on $\theta$. See \cite[Section~4.3]{Holmes2022Logarithmic-double} for a genus $2$ example.
\end{example}

In this section we will prove the following formula.
\begin{theorem}
\label{thm:firstformula}
Let $\gl$ be the map $\Mpt_{g,n} \to \Mpt_{g+1,n-2}$. Let $A \in \Z^{n-2}$with sum $k(2g-2+n)$. We have a splitting %For $b \in \Z$, write $A_b = (A,b,-b)$.
\begin{equation}
\label{eq:formulaevtrop}
\gl^* \LLogDR(A) = \sum_{b \in \Z} \LLogDR_{A,b}^\gl
\end{equation}
where for all $b \in \Z$ we have
\begin{align*}
\LLogDR_{A,b}^\gl &= \LLogDR(A,b,-b) \cdot \Phi\left(\frac12|\delta_{A,b,-b}^\theta|\right) \\
&= \LLogDR(A,b,-b) \cdot \Phi\left(\max(\delta_{A,b,-b}^\theta,0)\right) \\
&= \LLogDR(A,b,-b) \cdot \Phi\left(\max(-\delta_{A,b,-b}^\theta,0)\right).
\end{align*}
and we have $\LLogDR_{A,b} = 0$ if $|b| \geq \sum_{i : a_i > 0} |a_i| + |k|(2g-2+n)$.
\end{theorem}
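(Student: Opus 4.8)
The plan is to analyze the pullback of $\LLogDR(A)$ geometrically via the pullback diagram \cref{diag:logdr}, and to reduce everything to the compatibility of virtual fundamental classes under Gysin pullback along the unit section $1 \to \G_\log$. First I would establish the decomposition of the double ramification \emph{locus}: using \cref{thm:gluing}, a point of $\gl^*\LLogDRL_{g+1}(A)$ is a pierced curve $C \in \Mpt_{g,n}$ together with a section $f$ of the $\G_\log$-torsor $P$ (attached to $\omega_C^{\log,\tensor -k}$) on the glued curve $C^\gl$, and pulling back along $\pie C \to C^\gl$ gives a section $f_0 = \gl^* f$ on $C$ whose slopes on the two new legs $p_{n-1},p_n$ are $b,-b$ for some $b\in\Z$, and with $f_0(p_{n-1}) = f_0(p_n)$ (equivalently $\ev_{n-1} = \ev_n$). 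Conversely, by the universal property \cref{thm:gluing}, such data descends to $C^\gl$. This gives the disjoint-union decomposition $\gl^*\LLogDRL_{g+1}(A) = \bigsqcup_b \LLogDRL_{A,b}^\gl$ with $\LLogDRL_{A,b}^\gl \subset \LLogDRL_g(A,b,-b)$ cut out as the locus $\ev_{n-1}/\ev_n = 1$, i.e.\ the fiber over $1\in\G_\log$ of the map $\ev_{n-1}/\ev_n : \LLogDRL_g(A,b,-b)\to\G_\log$. This is exactly the (pierced-curve version of the) pullback square \cref{diag:logdr}, which I would record on the level of the algebraic stacks underlying $\Mpt_{g,n}$.

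Next I would identify the class. The class $\gl^*\LLogDR(A)$ is by definition $\gl^*\pi^*\LogDR(A)$, and since $\LogDR(A) = j_* e^![\Mbar_{g+1,n-2}]$ for $e$ the unit of $\LogJ_{g+1,n-2}$, functoriality of refined Gysin pullback and base change give $\gl^*\LLogDR(A) = \sum_b (j_{A,b})_* \, 1^! [\LLogDRL_g(A,b,-b)]^{\vfc}$, where $1^!$ is the Gysin pullback along $1\hookrightarrow\G_\log$ in \cref{diag:logdr} and $[\LLogDRL_g(A,b,-b)]^{\vfc} = e^![\Mpt_{g,n}]$. The core computation is then: $1^! [\LLogDRL_g(A,b,-b)]^{\vfc}$ equals $[\LLogDRL_g(A,b,-b)]^{\vfc}$ capped with the first Chern class of the pullback of $\Ocal_{\G_\log}(1)$ along $\ev_{n-1}/\ev_n$, and one computes this Chern class using the blowup $\P^1\to\G_\log$ from \cref{ex:glog}. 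On the blowup, $[1] = [0] = [\infty]$ in $\CH(\P^1)$, and $[0]$ (resp.\ $[\infty]$) pulls back to $\Phi(\max(\delta_{A,b,-b}^\theta,0))$ (resp.\ $\Phi(\max(-\delta_{A,b,-b}^\theta,0))$), because the tropicalisation of $\ev_{n-1}/\ev_n$ along the locus $\LLogDRL_g(A,b,-b)$ is the piecewise linear function $\delta_{A,b,-b}^\theta = \alpha_{A,b,-b}^\theta(p_{n-1}) - \alpha_{A,b,-b}^\theta(p_n)$ of \cref{def:alphabtheta} — this is where one uses \cref{lem:linebundlepl} and the $\theta$-stability normalisation to see that $\alpha_{A,b,-b}^\theta$ is strict PL on (the universal curve over) $\LLogDRL_g(A,b,-b)$. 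Both expressions are equal since $\max(x,0) - \max(-x,0) = x$ is linear, hence $\Phi(\max(\delta,0)) - \Phi(\max(-\delta,0)) = \Phi(\delta) = 0$ in $\LogCH$ as $\delta$ restricts to a linear, globally-defined PL function on the relevant locus (equivalently, $\Ocal(\delta)$ is trivial as a log line bundle there). Pushing forward along $j_{A,b}$ and $\pi$ then gives the three displayed forms of $\LLogDR_{A,b}^\gl$.

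Finally, for the vanishing statement I would invoke properness: $\LLogDRL_{g+1}(A)$ is proper (it is a closed substack of a log blowup of $\Mbar_{g+1,n-2}$), so $\gl^*\LLogDRL_{g+1}(A)$ is proper, hence the decomposition into $\LLogDRL_{A,b}^\gl$ is finite. To get the explicit bound I would argue on tropicalisations: on any pierced tropical curve in the image, the PL function $\alpha$ with leg-slopes $A$ and slope $b$ on the glued edge has, at every vertex, incoming-plus-outgoing slopes summing to $k\cdot(\text{local Euler characteristic})$; bounding the slope that can appear on the glued edge by the total positive "flux" $\sum_{i:a_i>0}a_i$ plus the contribution $|k|(2g-2+n)$ coming from the twist yields $\LLogDRL_{A,b}^\gl = \varnothing$, and hence $\LLogDR_{A,b}^\gl = 0$, once $|b| \ge \sum_{i:a_i>0}|a_i| + |k|(2g-2+n)$. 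The main obstacle I anticipate is the middle step: carefully justifying that the Gysin pullback along $1\to\G_\log$ of the virtual class is represented by $\Phi(\max(\pm\delta_{A,b,-b}^\theta,0))$, i.e.\ matching the tropical/piecewise-linear description of $\ev_{n-1}/\ev_n$ with the explicit function $\delta_{A,b,-b}^\theta$ and checking this is compatible with the chosen $\theta$-stability (so that one lands in an honest algebraic log blowup where the Chern class computation makes sense); the rest is bookkeeping with functoriality of Gysin maps and the universal property \cref{thm:gluing}.
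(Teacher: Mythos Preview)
Your plan matches the paper's approach closely: decompose the pulled-back locus by slope $b$ via the gluing universal property (\ref{thm:gluing}), realise each piece as the fiber of $\ev_{n-1}/\ev_n$ over $1\in\G_\log$ (\ref{prop:glogpullback}), compute the Gysin pullback of $1\hookrightarrow\G_\log$ using $[1]=[0]=[\infty]$ on $\P^1$ (\ref{lem:1eq0}), and bound $b$ by a tropical flow argument (\ref{lem:bounded}).

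The one step you treat as formal but is not is the sentence ``functoriality of refined Gysin pullback and base change give $\gl^*\LLogDR(A) = \sum_b (j_{A,b})_*\, 1^!\,[\LLogDRL_g(A,b,-b)]^{\vfc}$.'' The left-hand side is, by definition, built from $\gl^!$ applied to $e_{g+1}^![\Mbar_{g+1,n-2}]$, where $e_{g+1}$ is the unit of $\LLogJ_{g+1,n-2}$; the right-hand side involves $1^!$ (from $1\hookrightarrow\G_\log$) applied to $e_g^![\Mpt_{g,n}]$. These are \emph{two different} obstruction theories on $\LLogDRL_{A,b}^\gl$, and no abstract functoriality statement identifies them. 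This comparison is exactly \ref{prop:vfceq} in the paper, and its proof is not bookkeeping: one introduces the intermediate space $I_{g,n}$ of $\theta$-stable line bundles on $C^\gl$ pulling back to the trivial bundle on $C$, builds the diagram \cref{eq:thediag} so that $e_{g+1}$ factors as $e' \circ i$ with $e'$ a base change of $e_g$ (giving $e'^! = e_g^!$), and then checks by hand that the normal bundles of the two codimension-one regular embeddings $i\colon \Mpt_{g,n}\hookrightarrow I_{g,n}$ and $j\colon 1\hookrightarrow\G_\log$ pull back to the same (in fact trivial) line bundle on $\LLogDRL_{A,b}^\gl$. Your final paragraph locates the ``main obstacle'' in the tropical matching of $\ev_{n-1}/\ev_n$ with $\delta_{A,b,-b}^\theta$, but that step is straightforward once one is on $\LLogDRL_g(A,b,-b)$ (where $\alpha^\theta$ is strict PL and independent of $\theta$); the genuinely non-formal step is this VFC comparison, which you have elided.
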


\begin{remark}
We remark that $\Phi(\max(\delta_{A,b,-b}^\theta,0))$ and $\Phi(\max(-\delta_{A,b,-b}^\theta,0))$ are \emph{not} equal. In \ref{prop:splitpsi} we will show the relation $\LLogDR(A,b,-b) \cdot \delta_{A,b,-b}^\theta = 0$ obtained from \ref{thm:firstformula} gives rise to a tautological relation between double ramification cycles, first proven in \cite[Proposition~3.1]{costantini2025integralspsiclassestwisteddouble}.
\end{remark}

In \ref{subsec:split} we show that the pullback $\gl^* \LLogDRL(A)$ of the log double ramification locus splits into finitely many connected components, indexed by a slope $b$. In \ref{prop:glogpullback} we will show that each component has a closed embedding in $\LLogDRL(A,b,-b)$, and that this embedding is a pullback of the codimension $1$ regular embedding $1 \to \G_\log$. In \ref{subs:compvfc} we will show the Gysin pullback of $\LLogDR(A,b,-b)$ along this embedding gives the correct virtual fundamental class on the component of $\gl^* \LLogDRL(A)$. Finally, in \ref{subsec:proof} we will compute this Gysin pullback, and show it equals $\LLogDR(A,b,-b) \cdot \Phi(\frac12|\delta_{A,b,-b}^\theta|)$.

As $\gl^* \LLogDR(A)$ does not depend on the stability condition, we also obtain relations if we take differing stability conditions.

\begin{corollary}
Let $\theta,\theta'$ be two different choices of stability conditions. Then we have
\[
\LLogDR(A,b,-b) \cdot \Phi(|\delta_{A,b,-b}^\theta| - |\delta_{A,b,-b}^{\theta'}|) = 0.
\]
\end{corollary}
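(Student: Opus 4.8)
The plan is to deduce this corollary immediately from \ref{thm:firstformula} by comparing the two expressions for the same cycle obtained from two stability conditions. Since the left-hand side $\gl^*\LLogDR(A)$ is defined purely in terms of the gluing map and the (pierced) logarithmic double ramification cycle, with no reference to any stability condition, it is canonically defined; by \ref{thm:firstformula} applied with $\theta$ we get $\gl^*\LLogDR(A) = \sum_{b\in\Z}\LLogDR(A,b,-b)\cdot\Phi\left(\frac12|\delta_{A,b,-b}^\theta|\right)$, and applied with $\theta'$ we get the analogous expression with $\delta^{\theta'}$. Subtracting, and using that $\Phi$ is additive (it is a group homomorphism from $\PL(\Mpt_{g,n}^\trop)$ to $\LogCH(\Mpt_{g,n})$, since it is induced by the additive map sending a piecewise linear function to the associated line bundle / boundary class), we obtain $\sum_{b\in\Z}\LLogDR(A,b,-b)\cdot\Phi\left(\frac12|\delta_{A,b,-b}^\theta| - \frac12|\delta_{A,b,-b}^{\theta'}|\right) = 0$.

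The remaining point is to promote this equality of sums to a termwise equality, i.e.\ to show $\LLogDR(A,b,-b)\cdot\Phi(|\delta_{A,b,-b}^\theta| - |\delta_{A,b,-b}^{\theta'}|) = 0$ for each individual $b$. First I would note that the sums are finite, since by the last clause of \ref{thm:firstformula} each summand vanishes for $|b|\geq\sum_{i:a_i>0}|a_i| + |k|(2g-2+n)$, independently of the stability condition. To separate the terms, the key observation is that the $b$-th summand is supported (as a class, or after pushing to a suitable log blowup) on the locus $\LLogDRL_g(A,b,-b)$, and that these loci for distinct values of $b$ are disjoint: by the splitting $\gl^*\LLogDRL_{g+1}(A) = \bigsqcup_b \LLogDRL^\gl_{A,b}$ of \ref{def:complog}, together with $\LLogDRL^\gl_{A,b}\subset\LLogDRL_g(A,b,-b)$, the component indexed by $b$ lies in the double ramification locus where the slope along the glued edge is exactly $b$, and no curve can simultaneously have slope $b$ and slope $b'\neq b$ along that edge. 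Hence the $b$-th term of the difference is the restriction of a global class to the open-and-closed piece indexed by $b$, and vanishing of the total sum forces each piece to vanish separately.

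Alternatively — and perhaps more cleanly, avoiding any support argument — one can simply run the proof of \ref{thm:firstformula} twice: the theorem identifies $\LLogDR^\gl_{A,b}$, the $b$-th component of $\gl^*\LLogDR(A)$ coming from the canonical decomposition along slopes, with $\LLogDR(A,b,-b)\cdot\Phi(\frac12|\delta_{A,b,-b}^\theta|)$; since $\LLogDR^\gl_{A,b}$ itself does not depend on $\theta$ (only its expression in terms of $\delta^\theta$ does), applying the theorem with $\theta$ and with $\theta'$ and subtracting gives the claim termwise at once. I would phrase the proof this way: "By \ref{thm:firstformula} we have $\LLogDR^\gl_{A,b} = \LLogDR(A,b,-b)\cdot\Phi\left(\frac12|\delta_{A,b,-b}^\theta|\right)$ for any choice of $\theta$; since the left-hand side is independent of $\theta$, the difference of the right-hand sides for $\theta$ and $\theta'$ vanishes, and linearity of $\Phi$ gives $\LLogDR(A,b,-b)\cdot\Phi\left(\frac12(|\delta_{A,b,-b}^\theta| - |\delta_{A,b,-b}^{\theta'}|)\right) = 0$. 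Clearing the factor $\tfrac12$ (harmless over $\Q$) yields the statement."

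There is essentially no obstacle here: the corollary is a formal consequence of \ref{thm:firstformula} once one records that the left-hand side of that theorem is manifestly stability-independent and that $\Phi$ is additive. The only mild care needed is the remark — which the paper already flags in the discussion following \ref{thm:firstformula} and in \ref{inprop:splitpsi} — that $\Phi(\max(\delta,0))$ and $\Phi(\max(-\delta,0))$ differ, so one must use the \emph{symmetrized} combination $\frac12|\delta| = \frac12(\max(\delta,0) + \max(-\delta,0))$, or equivalently note that both the $\max(\delta^\theta,0)$ form and the $\max(-\delta^\theta,0)$ form give the same product with $\LLogDR(A,b,-b)$, so the stated combination $|\delta^\theta| - |\delta^{\theta'}|$ (and indeed $\delta^\theta - \delta^{\theta'}$, up to the $\LLogDR(A,b,-b)\cdot\delta = 0$ relation of \ref{inprop:splitpsi}) is the right invariant to take.
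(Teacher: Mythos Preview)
Your proposal is correct and, in its second ``alternatively'' formulation, is exactly the paper's argument: the paper states the corollary immediately after \ref{thm:firstformula} with only the remark that $\gl^*\LLogDR(A)$ (and indeed each summand $\LLogDR_{A,b}^\gl$) is independent of the stability condition, so equating the two expressions for $\LLogDR_{A,b}^\gl$ and using additivity of $\Phi$ gives the result termwise. Your first approach via disjointness of supports is an unnecessary detour, as you yourself note; the termwise identification in \ref{thm:firstformula} already gives the claim for each $b$ without any support argument.
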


In the special case $A = (0,\dots,0)$ we have $\LLogDR(A) = \DR(A) = \lambda_{g+1}$, and we obtain the following vanishing, shown in \cite{faber00logseries} of the pullback of $\lambda_{g+1}$ as a corollary.

\begin{corollary}
We have the equality \[\gl^* \lambda_{g+1} = 0.\]
\end{corollary}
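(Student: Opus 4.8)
\textbf{Proof plan for the vanishing $\gl^* \lambda_{g+1} = 0$.}
The plan is to deduce the statement directly from \ref{thm:firstformula} applied to the case $A = (0,\dots,0) \in \Z^{n-2}$ with $k = 0$. First I would identify all the relevant objects in this special case. When $A$ is the zero sequence and $k = 0$, the twisting bundle $\omega_C^{\log,\otimes 0}(\sum_i a_i p_i) = \Ocal_C$ is already trivial, so $\LLogDRL_{g+1}(A) = \Mpt_{g+1,n-2}$ and the virtual fundamental class construction recovers $\LLogDR(A) = \DR(A) = \lambda_{g+1}$ (the top Chern class of the Hodge bundle), using the standard identification of $\DR(0,\dots,0)$ with $\lambda_g$; here I am free to cite this as a known fact. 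Thus the left-hand side of \ref{thm:firstformula} is exactly $\gl^* \lambda_{g+1}$, the quantity we want to show vanishes.

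Next I would analyze the right-hand side $\sum_{b \in \Z} \LLogDR_{A,b}^\gl$. The bound in \ref{thm:firstformula}, namely $\LLogDR_{A,b} = 0$ whenever $|b| \geq \sum_{i : a_i > 0} |a_i| + |k|(2g-2+n)$, collapses dramatically: since all $a_i = 0$ and $k = 0$, the right-hand side of this inequality is $0$, so $\LLogDR_{A,b}^\gl = 0$ for every $b$ with $|b| \geq 0$, i.e. for \emph{all} $b \in \Z$. Hence the entire sum on the right-hand side of \cref{eq:formulaevtrop} is zero, and we conclude $\gl^* \lambda_{g+1} = \gl^* \LLogDR(A) = 0$ in $\LLogCH^{g+1}(\Mpt_{g,n})$. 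To get the stated equality in $\CH(\Mbar_{g,n})$ (if that is the intended reading), I would push forward along $\pi: \Mpt_{g,n} \to \Mbar_{g,n}$ using that $\gl^* \lambda_{g+1}$ on $\Mbar_{g,n}$ pulls back to $\gl^* \LLogDR(0,\dots,0)$ on $\Mpt_{g,n}$ under $\pi^*$, which is injective on the classes in question (or simply invoke $\pi$ being an isomorphism on underlying stacks together with $\lambda_{g+1}$ being a classical, non-logarithmic class, so its pullback lives in the classical Chow subring).

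The only genuinely non-formal point — and the place where I would spend the most care — is justifying the identification $\LLogDR(0,\dots,0) = \lambda_{g+1}$ and, more subtly, that this agrees with the classical pullback $\gl^*\lambda_{g+1}$ rather than with some strictly logarithmic refinement that could differ. Concretely: one must check that for $A = 0$, $k = 0$ the log double ramification locus $\LogDRL_{g+1}(0)$ is all of $\Mbar_{g+1,n-2}$ with the Abel--Jacobi section being the constant zero section, so that $e^![\Mbar_{g+1,n-2}]$ is the self-intersection of the zero section of $\LogJ_{g+1,n-2}$, which computes the (log) Chern class $\lambda_{g+1}$ by the usual argument that the normal bundle of the zero section of the Jacobian is the dual of the Hodge bundle; and that this log class maps to the classical $\lambda_{g+1}$ under the natural map $\LogCH \to \CH$ (equivalently, that $\lambda_{g+1}$ is pulled back from $\Mbar$ and so carries no log content). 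Granting this identification, the result is immediate from the displayed bound. I would also note as a sanity check that the vanishing $\gl^*\lambda_{g+1} = 0$ was already established in \cite{faber00logseries}, so the present deduction is a consistency check on \ref{thm:firstformula} as much as a new proof.
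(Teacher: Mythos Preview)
Your proposal is correct and follows exactly the paper's approach: the corollary is stated immediately after \ref{thm:firstformula} with the one-line justification that for $A=(0,\dots,0)$ one has $\LLogDR(A)=\DR(A)=\lambda_{g+1}$, and then the bound in the theorem forces every summand to vanish. Your extra care about the identification $\LLogDR(0,\dots,0)=\lambda_{g+1}$ and about passing between $\LogCH$ and $\CH$ is reasonable, though the paper treats these as known; one small point worth noting is that for $b=0$ the \emph{locus} $\LLogDRL_{0,0}^{\gl}$ is actually all of $\Mpt_{g,n}$ (so \ref{lem:bounded}, which carries a $b\ne 0$ hypothesis, does not apply), but the \emph{class} still vanishes because $\alpha_{0,\dots,0}^{\theta}\equiv 0$ makes $\delta^{\theta}_{0,\dots,0}=0$ and hence $\Phi(\max(0,0))=0$ in the product formula---so your reading of the theorem's bound is correct as stated.
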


We also immediately obtain the following vanishing.

\begin{corollary}
We have \[\gl^* \LogDR(A)^2 = 0\].
\end{corollary}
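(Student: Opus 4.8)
The plan is to read the vanishing straight off the splitting formula \ref{thm:firstformula}, exploiting that it provides \emph{two} different expressions for each piece $\LLogDR_{A,b}^\gl$. Since $\gl^*\colon\LogCH(\Mpt_{g+1,n-2})\to\LogCH(\Mpt_{g,n})$ is a ring homomorphism we have $\gl^*\big(\LLogDR(A)^2\big)=(\gl^*\LLogDR(A))^2$, so it suffices to show the right-hand side vanishes. Expanding \ref{eq:formulaevtrop},
\[
(\gl^*\LLogDR(A))^2=\sum_{b,b'\in\Z}\LLogDR_{A,b}^\gl\cdot\LLogDR_{A,b'}^\gl,
\]
a finite sum by the last clause of \ref{thm:firstformula}.

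For the off-diagonal terms, note that $\LLogDR_{A,b}^\gl$ is supported on $\LLogDRL_{A,b}^\gl$, and that for distinct $b$ these are the distinct connected components of the proper log algebraic stack $\gl^*\LLogDRL(A)$, hence pairwise disjoint closed substacks of any log blowup dominating them. A product of classes with disjoint supports vanishes, so $\LLogDR_{A,b}^\gl\cdot\LLogDR_{A,b'}^\gl=0$ for $b\neq b'$, and we are reduced to $(\LLogDR_{A,b}^\gl)^2=0$ for each fixed $b$. Here I would invoke \ref{thm:firstformula} twice, writing one factor of the square as $\LLogDR(A,b,-b)\cdot\Phi(\max(\delta_{A,b,-b}^\theta,0))$ and the other as $\LLogDR(A,b,-b)\cdot\Phi(\max(-\delta_{A,b,-b}^\theta,0))$; by commutativity of $\LogCH$,
\[
(\LLogDR_{A,b}^\gl)^2=\LLogDR(A,b,-b)^2\cdot\Phi(\max(\delta_{A,b,-b}^\theta,0))\cdot\Phi(\max(-\delta_{A,b,-b}^\theta,0)),
\]
so everything reduces to the identity $\Phi(\max(\delta,0))\cdot\Phi(\max(-\delta,0))=0$ for $\delta:=\delta_{A,b,-b}^\theta$.

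This last identity holds because $\max(\delta,0)\cdot\max(-\delta,0)=0$ identically as a piecewise polynomial function on $\Mpt_{g,n}^\trop$, and $\Phi$ extends to a ring homomorphism from piecewise polynomials to $\LogCH$. If one prefers to argue with piecewise linear functions directly: choose a log blowup $Y\to\Mpt_{g,n}$ on which both $\max(\delta,0)$ and $\max(-\delta,0)$ are strict; strictness of $\max(\delta,0)$ forces every cone of $Y$ into $\{\delta\ge 0\}$ or into $\{\delta\le 0\}$, so no cone of $Y$ contains both a ray on which $\delta$ is positive and a ray on which $\delta$ is negative. Then $\Phi(\max(\delta,0))$ and $\Phi(\max(-\delta,0))$ are pulled back from $Y$ as classes of effective boundary divisors $D^+,D^-$ supported on the rays with $\delta>0$, resp.\ $\delta<0$; since two boundary divisors of $Y$ meet only if their rays span a common cone, $D^+$ and $D^-$ have disjoint support, whence $[D^+]\cdot[D^-]=0$ in $\CH(Y)$ and hence in $\LogCH(\Mpt_{g,n})$. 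Combining the three observations gives $(\gl^*\LLogDR(A))^2=0$.

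I expect the only point of care to be this final identity: one must ensure that $\Phi$ of a function of the form $\max(\psi,0)$ really computes the class of the expected effective divisor — equivalently, that the blowup can be chosen fine enough to simultaneously strictify $\max(\delta,0)$ and $\max(-\delta,0)$ — and, for the off-diagonal step, that all the $\LLogDRL_{A,b}^\gl$ can be dominated by a single common log blowup so that the disjoint-support argument takes place in one Chow group. Everything else is formal manipulation inside the commutative ring $\LogCH(\Mpt_{g,n})$, granted \ref{thm:firstformula}.
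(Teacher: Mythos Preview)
Your proof is correct and follows essentially the same approach as the paper: use disjointness of the $\LLogDRL_{A,b}^\gl$ to kill off-diagonal terms, then for each $b$ use the two expressions in \ref{thm:firstformula} to reduce to the vanishing of $\Phi(\max(\delta,0))\cdot\Phi(\max(-\delta,0))$, which holds because these classes have disjoint support. You supply more justification for this last step than the paper does (it simply asserts disjoint support on $\{\delta>0\}$ and $\{\delta<0\}$), but the structure is identical.
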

\begin{proof}
As $\LLogDRL_{A,b}^\gl, \LLogDRL_{A,b'}^\gl$ are disjoint for $b \ne b'$, we obtain
\begin{align*}
\gl^* \LogDR(A)^2 = \sum_b \LLogDR(A,b,-b)^2 \cdot \Phi(\max(\delta_{A,b,-b}^\theta,0)) \cdot \Phi(-\max(\delta_{A,b,-b}^\theta,0))
\end{align*}
hese last two factors are supported on the disjoint subcycles where $\delta_{a,b,-b}^\theta > 0$ and $\delta_{a,b,-b}^\theta < 0$ respectively, so each term vanishes.
\end{proof}

\subsection{Splitting of the pullback of the logarithmic double ramification cycle}
\label{subsec:split}

We first set up some notation.% We drop $g,n$ from our notation.

\begin{definition}
We define $\LLogDRL_A^\gl$ by the pullback square
\[
\begin{tikzcd}
\LLogDRL_A^\gl \arrow[r] \arrow[d] \pb & \arrow[d] \LLogDRL(A)\\ 
\Mpt_{g,n} \arrow[r] & \Mpt_{g+1,n-2}
\end{tikzcd}
\]
and similarly define $\LLogJ_{g,n}^{\gl}$ by the pullback square
\[
\begin{tikzcd}
\LLogJ_{g,n}^{\gl} \arrow[r] \arrow[d] \pb & \LLogJ_{g+1,n-2} \arrow[d] \\ 
\Mpt_{g,n} \arrow[r] & \Mpt_{g+1,n-2}
\end{tikzcd}
\]
\end{definition}

% By Lemma~\ref{lem:funcpointsdr}, for the universal curve $C/\LLogDRL_{A}^\gl$ we obtain a map $f: C^\gl \to P$, where $P/\LLogDRL_{A}^\gl$ is a $\G^\log$-torsor.

\begin{definition}
\label{def:complog}
Let $s: \LLogDRL_{A}^\gl \to \Z$ denote the map measuring the slope of $\alpha_A^\theta$ along the glued edge, oriented from $p_{n-1}$ to $p_{n}$.
Define \[\LLogDRL_{A,b}^\gl \coloneqq s^{-1}(b).\]
\end{definition}

The function $s$ is locally constant and we obtain a splitting
\[
\LLogDRL_{A}^\gl = \bigsqcup_{b\in\Z} \LLogDRL_{A,b}^\gl
\]

\begin{remark}
In fact, one can obtain $s$ from a map of algebraic stacks $\LLogDRL_{A}^\gl \to \sqcup_{b \in \Z} \B \G_m$ using the theory of \emph{cone stacks with boundary} developed in \cite[Section~3]{pandharipande2024logtautrings}. We construct this map in this remark.

By \cite[Remark~42]{pandharipande2024logtautrings} the category of cone stacks with boundary is equivalent to reduced closed substacks of Artin fans. Let $\Gamma$ be the graph with one vertex of genus $g$, and a single, oriented loop $e$. This gives a stratum $\Mbar_{\Gamma}^\trop$ of $\Mbar_{g+1,n-2}^\trop$. We obtain a tropicalisation map $\LLogDRL_A^\gl \to \Mbar_{\Gamma}^\trop$. 

Let $\Sigma$ be the cone stack with boundary consisting of piecewise linear functions on $e$ that vanish at the start of $e$. Explicitly, this has $\R$-points given by pairs $(\ell, f: [0,\ell] \to \R)$ with $\ell \in \R_{> 0}$, with $f$ piecewise linear with integer slopes and $f(0) = 0$. This contains the cone stack with boundary $\Sigma_1$ of linear functions on $e$ that vanish at the start of $e$. This is isomorphic to $\Z \times \R_{>0}$, and corresponds to the Artin stack{}
\[
\sqcup_{b \in \Z} \B \G_m.
\]

Then $\alpha_A^\theta$ is a piecewise linear function on the universal tropical curve over $\Mbar_{\Gamma}^\trop$, and there is a map of cone stacks with boundary $\Mbar_{\Gamma}^\trop \to \Sigma$. The function $\alpha_A^\theta$ is strict piecewise linear over the double ramification locus, so the composition $\LLogDRL_A^\gl \to \Mbar_{\Gamma}^\trop$ factors through $\Sigma_1$. We obtain a map of algebraic stacks $\LLogDRL_{A}^\gl \to \sqcup_{b \in \Z} \B \G_m$.

Now we see $s$ is a continuous map to a discrete set, and in particular locally constant.
\end{remark}

\begin{definition}
Let $[\LLogDRL_{A,b}^\gl]^\vfc$ denote the virtual fundamental class $\gl^! \LLogDRL_{A}^\gl$ on $\LLogDRL_{A,b}^\gl$. Let $\LLogDR_{A,b}^\gl$ be the pushforward of $[\LLogDRL_{A,b}^\gl]^\vfc$ to $\LogCH(\Mpt_{g,n})$. 
\end{definition}

We obtain the splitting
\[
\gl^! \LLogDR(A) = \sum_{b \in \Z} \LLogDR_{A,b}^\gl
\]
as desired. This infinite sum has only finitely many non-zero terms, as $\gl^* \LLogDRL(A)$ is proper. Using balancing equations, we can give an explicit bound.

% \begin{proof}
% Let $TDR \subset \Mpt_{g+1,n-2}^\trop$ be the map 
% \end{proof}

\begin{lemma}
\label{lem:bounded}
The space $\LLogDRL_{A,b}^\gl$ is empty if $b \ne 0$ and $|b| \geq \sum_{i : a_i > 0} a_i + |k| (2g-2+n)$.
\end{lemma}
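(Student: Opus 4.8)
The statement is a properness/boundedness claim: the locus $\LLogDRL_{A,b}^\gl$, which parametrises pierced log curves $C$ of genus $g$ whose gluing $C^\gl$ lies in $\LLogDRL_{g+1}(A)$ and carries a piecewise linear function $\alpha_A^\theta$ with slope $b$ along the glued edge $e$, is empty once $|b|$ exceeds the stated bound. The natural approach is to analyse a single geometric point, i.e.\ a metrised dual graph $\Gamma^\gl$ with a piecewise linear function $\alpha_A^\theta$ satisfying the balancing condition (total slope $0$ at each vertex, and slopes $a_i$ on the legs, coming from $\theta$-stability of $\omega_C^{\log,\otimes -k}(\alpha_A^\theta)$, equivalently triviality of the log line bundle $\omega_C^{\log,\otimes -k}(\sum_i a_i p_i)$ as a log line bundle). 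I would first reduce to the case where $\Gamma^\gl$ has a single vertex with a loop $e$: since $\alpha_A^\theta$ is strict piecewise linear over the double ramification locus, any more degenerate stratum in the closure has this as a specialisation, so it suffices to bound $b$ for the generic type of $\LLogDRL_{A,b}^\gl$. Alternatively, and more cleanly, I would work directly with the balancing equation on an arbitrary graph.

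\textbf{Key steps.} (1) Fix a point of $\LLogDRL_{A,b}^\gl$; let $C^\gl/S$ be the glued curve and let $\alpha$ be the PL function with slopes $a_i$ on legs and balancing $0$ at every vertex, such that $\omega_{C^\gl}^{\log,\otimes -k}(\alpha)$ is trivial as a log line bundle --- equivalently, $\alpha$ itself witnesses triviality. The slope of $\alpha$ along the glued edge $e$ is $b$. (2) Orient $e$ from $p_{n-1}$ to $p_n$ and let $v$ be the vertex (or vertices) that $e$ is incident to. The key is a ``flow''/balancing estimate: at any vertex $w$, the sum of outgoing slopes of $\alpha$ along all half-edges and legs incident to $w$, corrected by the $\omega^{\log,\otimes -k}$-twist contribution, is zero. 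Summing these local conditions over a set of vertices $W$ separated from its complement gives that the net flow of $\alpha$ across the edges joining $W$ to $W^c$ is controlled by $\sum_{i \in L(W)} a_i$ together with a $k(2g(W)-2+n(W))$-type term coming from the degree of $\omega^{\log}$. (3) Apply this with $W$ chosen so that $e$ is the unique edge (or a bounded number of edges with controlled slopes) separating $W$ from $W^c$; this forces $|b| \le \sum_{i : a_i > 0} a_i + |k|(2g-2+n)$. In the untwisted case $k=0$, the bound is simply that the flow of a harmonic function across any cut is at most the total positive slope entering on the legs --- this is exactly the statement that a rational function $f$ on a curve with $\div f = \sum a_i p_i$ cannot have vanishing/pole order along a node larger than $\sum_{i : a_i > 0} a_i$.

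\textbf{Main obstacle.} The main subtlety is bookkeeping the $\omega_{C}^{\log,\otimes -k}$ twist correctly in the balancing equation: $\alpha_A^\theta$ is the PL function trivialising $\omega_C^{\log,\otimes -k}(\sum a_i p_i)$, not $\Ocal_C(\sum a_i p_i)$, so at each vertex $w$ the local balancing reads $\sum_{h \text{ at } w} (\text{slope of } \alpha_A^\theta \text{ along } h) = -k\,\mathrm{deg}(\omega_C^{\log}|_w) = -k(2g(w) - 2 + n(w))$ rather than $0$, and one has to track how this multidegree is distributed. Handling this --- and making sure the contribution of the twist across a cut telescopes to exactly $k(2g-2+n)$ rather than something larger --- is where I expect the real work to be; everything else is the classical ``max flow across a cut'' argument. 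I would therefore set up the proof by choosing the cut to isolate the loop edge $e$, writing the two possible orientations, and in each case bounding $b$ from above and below by $\sum_{i : a_i > 0} a_i + |k|(2g-2+n)$, with the $k$-term coming from the total log-canonical degree $2g-2+n$ on the genus-$g$ side. The case $b = 0$ is of course always allowed, hence the explicit exclusion in the statement.
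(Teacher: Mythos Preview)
Your framework is right and matches the paper's: fix a geometric point, use the twisted balancing condition $\sum_{h\text{ at }v}(\text{slope of }\alpha) = -k(2g(v)-2+n(v))$ at each vertex, and sum over a set $W$ of vertices to bound the flow across the cut. The gap is in your choice of cut. You propose to choose $W$ so that the glued edge $e$ is ``the unique edge (or a bounded number of edges with controlled slopes)'' crossing it, but $e$ is non-separating in $\Gamma^\gl$ --- cutting it returns the connected genus-$g$ graph $\Gamma$ --- so \emph{every} cut separating the endpoints of $e$ must cross other edges of $\Gamma$, and those edges have a priori unbounded slopes of either sign. Explaining how to control those slopes is the actual content of the lemma, and your proposal does not supply it. (Your alternative reduction to the one-vertex-with-loop stratum is also unsafe: $\LLogDRL_{A,b}^\gl$ need not meet that open stratum at all.)

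The paper's resolution is a level-set cut: take $V_0=\{v:\alpha(v)\le\alpha(v_{n-1})\}$, where $v_i$ is the vertex carrying leg $p_i$. Assuming $b>0$ without loss of generality, the glued edge has positive length, so $\alpha(v_n)=\alpha(v_{n-1})+b(\ell_{n-1}+\ell_n)>\alpha(v_{n-1})$ and $e$ crosses the cut; and by the definition of $V_0$ every other edge from $V_0$ to its complement has strictly positive outgoing slope. Summing balancing over $V_0$ then gives $b$ plus positive edge and leg contributions on one side, bounded by $-\sum_{i:a_i<0,\,v_i\in V_0}a_i$ together with the twist term. The twist bookkeeping you flag as the main obstacle is then immediate: each $2g(v)-2+n(v)$ is positive by stability and their sum over all vertices is $2g-2+n$, so the partial sum over $V_0$ is at most $|k|(2g-2+n)$. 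In short, the hard step is not the $k$-correction but finding a cut on which all auxiliary edge slopes have a definite sign; the level-set trick is what you are missing.
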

\begin{proof}
Without loss of generality, assume $b > 0$. 

Let $C/s$ be a geometric point of $\LLogDRL_{A,b}^\gl$. Let $\Gamma = (V,E)$ be the corresponding tropical (unglued) curve. Let $\alpha = \alpha_{A}$ be the piecewise linear function on $\Gamma^\gl$ defined in \ref{def:alphabtheta}. As $C$ lies in the logarithmic double ramification locus, this is a strict piecewise linear function that does not depend on $\theta$, and $\alpha$ satisfies the balancing equation, meaning that for every vertex the sum of outgoing slopes is $0$. 

Write $v_i$ for the vertex that leg $p_i$ is attached to. Denote $V_0 = \{v \in V : \alpha(v) \leq \alpha(v_{n-1})\}$. As $\alpha$ has slope $b$ along the glued edge, we have $\alpha(v_n) = \alpha(v_{n-1}) + b(\ell_{n-1} + \ell_n) > \alpha(v_{n-1})$, so $v_n \not\in V_0$.

The outgoing slopes of $\alpha$ at a vertex $v$ is $-k(2g(v) - 2 + n(v))$. Summing this condition over $V_0$ gives us the equality
\[
b + \sum_{e: v \to w | v \in V_0, w \in V \setminus V_0} s(e) + \sum_{i : a_i > 0, v_i \in V_0} a_i - \sum_{v \in V_0} k(2g(v) - 2 + n(v))  = -\sum_{i : a_i < 0, v_i \in V_0} a_i
\]
where all terms except possibly $k(2g(v) - 2 + n(v))$ are positive. As $\Gamma$ is connected, the total contribution from the edges between $V_0$ and $V \setminus V_0$ is positive.

In particular, we must have $0 < b < - \sum_{i : a_i < 0} a_i = \sum_{i : a_i > 0} a_i + |k|(2g-2+n)$.
\end{proof}

Now we fix a choice of $b \in \Z$, and study a single component $\LLogDRL_{A,b}^\gl$.

% \begin{definition}
% Let $[\LLogDRL_{A,b}^\gl]^\vfc$ denote the restriction of the class $\gl^*\LLogDR(A) \in \LogCH(\gl^* \LLogDRL(A))$ to the connected component $\LLogDRL_{A,b}^\gl$.
% \end{definition}

\begin{lemma}
\label{lem:logdrsplits}
We have an inclusion\footnote{On the level of underlying algebraic stacks, this is not a monomorphism. On the level of log stacks, it is a log closed embedding (a closed embedding in a log blowup). In particular, it is a log monomorphism. As our log stacks are CFGs over $\LogSch$, it is thence an inclusion of log stacks.}
\[
\LLogDRL_{A,b}^\gl \subset \LLogDRL({A,b,-b})
\]
\end{lemma}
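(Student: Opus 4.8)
The statement we must establish is that a geometric point of $\LLogDRL_{A,b}^\gl$ naturally defines a geometric point of $\LLogDRL(A,b,-b)$, and that this assignment is compatible with families, giving a log closed embedding. The plan is to use the functor-of-points description of both sides from \ref{lem:funcpointsdr} together with the universal property of gluing, \ref{thm:gluing}. A point of $\LLogDRL_{A,b}^\gl$ consists of a pierced log pointed curve $(C/S,p_1,\dots,p_n)$ together with a glued curve $C^\gl \in \LLogDRL(A)$, i.e.\ a $\G_\log$-torsor $P^\gl/C^\gl$ induced by $\omega_{C^\gl}^{\log,\tensor -k}$ and a section $f^\gl\colon C^\gl \to P^\gl$ with slope $a_i$ along leg $p_i$ and $\res(f^\gl(p_1)) = 1$. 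First I would apply \ref{thm:gluing} (with $X$ taken to be the appropriate $\G_\log$-torsor, or rather its underlying log stack) to transport $f^\gl$ to a section $f = \pie{C} \to C^\gl \to P$ on the pierced curve $C$; concretely, $f$ is the pullback $\gl^* f^\gl$, and the universal property tells us exactly that $f\circ p_{n-1} = f \circ p_n$ and that the slopes of $f$ at $p_{n-1}$ and $p_n$ add to $0$. Since $\LLogDRL_{A,b}^\gl$ is by definition the locus where the slope of $\alpha_A^\theta$ along the glued edge (oriented from $p_{n-1}$ to $p_n$) equals $b$, and on the double ramification locus $\alpha_A^\theta$ is strict and equals the tropicalisation of $f^\gl$ (independent of $\theta$), the slope of $f$ at $p_{n-1}$ is $b$ and at $p_n$ is $-b$.

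Next I would check that the resulting data $(C/S, p_1,\dots,p_n, P, f)$ is a point of $\LLogDRL(A,b,-b)$ in the sense of \ref{lem:funcpointsdr}. This requires: (i) the $\G_\log$-torsor $P/C$ is the one induced by $\omega_C^{\log,\tensor -k}$ — this is immediate because gluing of curves is compatible with the dualising sheaf, so $\omega_{C^\gl}^{\log}$ pulls back to $\omega_C^{\log}$ along $\gl$, and the torsor construction commutes with pullback; (ii) the slopes of $f$ along the legs are $(a_1,\dots,a_{n-2},b,-b)$ — the first $n-2$ are unchanged from $f^\gl$, and the last two were just computed; (iii) the normalisation $\res(f(p_1)) = 1$ — this is inherited directly from $\res(f^\gl(p_1)) = 1$ since $p_1$ is unaffected by the gluing. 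Hence the assignment lands in $\LLogDRL(A,b,-b)$, and by the bijectivity in \ref{thm:gluing} it is functorial in $S$, so it defines a morphism of log stacks $\LLogDRL_{A,b}^\gl \to \LLogDRL(A,b,-b)$ over $\Mpt_{g,n}$.

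Finally I would argue that this morphism is a log closed embedding (hence, being a map of CFGs over $\LogSch^\fs$, an inclusion). On underlying algebraic stacks both sides map to $\Mbar_{g,n}$, and the morphism is compatible with these; the source is a locally closed substack (a union of boundary strata's worth of double ramification data) and the fibres of the map are injective on points by \ref{thm:gluing}, so the map is a monomorphism of log stacks. More structurally, the boundary stratum $\Mpt_{g,n} \hookrightarrow \Mpt_{g+1,n-2}$ exhibits $\LLogDRL_{A,b}^\gl$ as a component of the pullback $\LLogDRL_A^\gl = \Mpt_{g,n}\times_{\Mpt_{g+1,n-2}} \LLogDRL(A)$, and on each such component the evaluation/slope data matches that of $\LLogDRL(A,b,-b)$ exactly, giving the factorisation through a log blowup as a closed embedding. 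The main obstacle I anticipate is the bookkeeping in step two showing $P/C$ is genuinely the torsor attached to $\omega_C^{\log,\tensor -k}$ rather than some twist of it — one must carefully track that the piecewise linear correction $\beta$ relating $\Ocal_C(\alpha_A^\theta)$ to $\Ocal_C(\sum a_i p_i)$ (cf.\ \ref{lem:linebundlepl}) does not disturb the identification of log line bundles after gluing, i.e.\ that the twist introduced along the glued edge is precisely absorbed by moving from $A$ to $(A,b,-b)$ — but this is exactly what the slope computation above, combined with \ref{lem:loglinebundlekern}, delivers.
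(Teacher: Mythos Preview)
Your proposal is correct and follows essentially the same approach as the paper: both arguments use the functor-of-points description from \ref{lem:funcpointsdr} and produce the desired section on $C$ by composing the given section $f\colon C^\gl \to P$ with the natural map $\pie{C}\to C^\gl$. The paper's proof is considerably terser---it simply writes down the composite and asserts it has the required slopes---whereas you additionally spell out the compatibility of the torsor with $\omega_C^{\log,\tensor -k}$, the normalisation at $p_1$, and the log-monomorphism property; these elaborations are correct but not strictly needed, and your worry in the final paragraph about a possible twist is already resolved by the fact that $\gl^*\omega_{C^\gl}^{\log}\cong\omega_C^{\log}$, so no appeal to \ref{lem:linebundlepl} or \ref{lem:loglinebundlekern} is necessary here.
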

\begin{proof}
Recall that by \ref{lem:funcpointsdr} we have
\[
\LLogDRL({A,b,-b}) = \{((C/S,p_1,\dots,p_n) \in \Mpt_{g,n}, P/C, f: C \to P)\}
\]
where $\alpha$ is required to have slopes $(A,b,-b)$ at the legs, and similarly
\[
\LLogDRL_{A,b}^\gl = \{((C/S,p_1,\dots,p_n) \in \Mpt_{g,n}, P/C, f: C^\gl \to P)\}
\]
where $f$ is required to have slopes $A$ at the legs, and $b$ at the glued edge.

For $((C/S,p_1,\dots,p_n) \in \Mpt_{g,n}, P/S, f: C^\gl \to P) \in \LLogDRL_{A,b}^\gl$, we see the composite $f \circ (C \to C^\gl)$ is a function $f': C \to P$ with the required slope.
\end{proof}

\begin{definition}
\label{def:ev}
Let
\[
\delta_{A,b,-b}^\log: \LLogDRL({A,b,-b}) \to \G_\log
\]
be the map defined as follows.

Let $((C/S, p_1 ,\dots, p_n ), P/S, f : C \to P) \in \LLogDRL({A,b,-b})$. Recall from \ref{def:evmaps} the evaluation map $\ev_{i}$, the composition of the section $p_i: S \to C$, the map $f: C \to P$ and the residue map $\res: P|_{p_i} \to \G_\log$. Then
\[
\delta_{A,b,-b}^\log(C/S) = (-1)^k \ev_{n}/\ev_{n-1} \in \G_\log(S).
\]
\end{definition}
\begin{remark}
The sign $(-1)^k$ appears because the two residue maps $(\omega_C^{\log,\tensor -k})|_q \to \Ocal_S$ at a node $q$ differ by a factor $(-1)^k$.
\end{remark}

\begin{lemma}
The composition of $\delta_{A,b,-b}^\log$ and the quotient map $\G_\log \to \G_\trop$ is equal to the piecewise linear function \[\delta_{A,b,-b}^\theta|_{\LLogDRL({A,b,-b})}: \LLogDRL({A,b,-b}) \to \G_\trop.\]
\end{lemma}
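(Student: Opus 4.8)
The plan is to unwind both sides of the claimed equality of maps $\LLogDRL(A,b,-b)\to \G_\trop$ and check they agree on $S$-points. On the one hand, $\delta_{A,b,-b}^\theta$ is, by \ref{def:alphabtheta}, the piecewise linear function $\alpha_{A,b,-b}^\theta(p_{n})-\alpha_{A,b,-b}^\theta(p_{n+1})$ (read on the appropriate curve with $n$ legs, here the two glued legs play the role of the last two markings), so its restriction to $\LLogDRL(A,b,-b)$ is the tropicalisation of the data $f$ evaluated at the two relevant sections. On the other hand, by \ref{def:ev} the map $\delta_{A,b,-b}^\log$ is $(-1)^k\,\ev_{n}/\ev_{n-1}$, and composing with $\G_\log\to\G_\trop$ kills the unit $(-1)^k$ and sends $\ev_i = \res\circ f\circ p_i$ to its image in $\ghost_S$, i.e. to the tropicalisation of $f\circ p_i$. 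So both sides compute the difference of the tropical values of $f$ at the two sections; the content is that this tropical value is exactly $\alpha^\theta$ restricted to the double ramification locus.

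The key steps, in order. First, recall (as noted in the remark after \ref{def:alphabtheta}) that over $\LLogDRL_g(A,b,-b)$ the function $\alpha_{A,b,-b}^\theta$ becomes strict piecewise linear and independent of $\theta$: on this locus the line bundle $\omega_C^{\log,\tensor -k}(\sum b_i p_i)$ is trivial as a log line bundle, so by \ref{lem:loglinebundlekern} there is an honest PL function trivialising it, and $\alpha_{A,b,-b}^\theta$ must be that function (up to the normalisation $\alpha(p_1)=0$). Second, identify this PL function with the tropicalisation of the section $f:C\to P$ from \ref{lem:funcpointsdr}: the $\G_\log$-torsor $P$ is the one attached to $\omega_C^{\log,\tensor -k}$, and a section $f$ with the prescribed leg slopes $(A,b,-b)$ has, by passing to ghost sheaves, an underlying $\G_\trop$-valued tropical section which is exactly a PL function with those slopes trivialising the corresponding $\G_\trop$-torsor — hence it agrees with $\alpha_{A,b,-b}^\theta$ after fixing the constant by the normalisation built into $\ev$ (the condition $\res(f(p_1))=1$, i.e. $\ev_1 = 1$, matches $\alpha(p_1)=0$). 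Third, take the difference at $p_n$ and $p_{n-1}$: the tropical value of $f$ at $p_i$ is precisely the image of $\ev_i$ in $\ghost_S$, so $\alpha_{A,b,-b}^\theta(p_n)-\alpha_{A,b,-b}^\theta(p_{n-1})$ equals the $\G_\trop$-image of $\ev_n/\ev_{n-1}$, which is the $\G_\trop$-image of $(-1)^k\ev_n/\ev_{n-1} = \delta_{A,b,-b}^\log$. (One must keep careful track of which index is $p_n$ versus $p_{n+1}$ in \ref{def:alphabtheta} versus $p_{n-1},p_n$ in \ref{def:ev}; with the conventions in this section these line up.)

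I expect the main obstacle to be the careful bookkeeping of the residue trivialisations and the constant ambiguity in $\alpha^\theta$. The PL function $\alpha_{A,b,-b}^\theta$ is only defined up to an additive constant by conditions (1)--(2) of \ref{def:alphabtheta}, pinned down by (3), $\alpha(p_1)=0$; on the other side, $\ev_i$ depends on the residue isomorphisms $\res:\omega_C^{\log,\tensor -k}|_{p_i}\isom\Ocal_S$, and the section $f$ in \ref{lem:funcpointsdr} is normalised so that $\res(f(p_1))=1$. One needs to check these two normalisations are compatible, so that the tropicalisation of $f$ is literally $\alpha_{A,b,-b}^\theta$ and not a translate; since we only compare the difference $\alpha^\theta(p_n)-\alpha^\theta(p_{n-1})$, the constant in fact drops out and only the slope data and the relative value matter, which should make this routine but requires stating it cleanly. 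The sign $(-1)^k$ is precisely the discrepancy between the two residue maps at a node (as flagged in the remark after \ref{def:ev}) and disappears in $\G_\trop$, so it plays no role here beyond being mentioned.
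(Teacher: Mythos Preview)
Your proposal is correct and follows essentially the same approach as the paper: identify the tropicalisation of the section $f:C\to P$ with the piecewise linear function $\alpha_{A,b,-b}^\theta$ (using that $P/\G_m$ is a trivialised $\G_\trop$-torsor on the DR locus), then take the difference at the two sections. The paper's proof is a terse one-liner asserting this identification, while you unpack the normalisation, the $\theta$-independence, and the vanishing of $(-1)^k$ in $\G_\trop$ more carefully; this extra bookkeeping is accurate and does no harm.
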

\begin{proof}
Let $((C/S, p_1 ,\dots, p_n ), P/S, f : C \to P) \in \LLogDRL({A,b,-b})$. Note that $P/\G_m$ is a trivialised $\G_\trop$-torsor, and with respect to this map $P \to \G_\trop$ sthe tropicalisation of $f$ is $\alpha_{A,b,-b}$, and hence the tropicalisation of $\delta_{A,b,-b}^\log$ is the piecewise linear function $\delta_{A,b,-b}^\theta$.
\end{proof}

\begin{proposition}
\label{prop:glogpullback}
The square
\[
\begin{tikzcd}
\LLogDRL_{A,b}^\gl \arrow[r] \arrow[d] \pb & \LLogDRL({A,b,-b}) \arrow[d, "\delta_{A,b,-b}^\log"]\\ 
* \arrow[r, "j"] & \G_\log
\end{tikzcd}
\]
is a fiber square, where $j$ is the unit of $\G_\log$.
\end{proposition}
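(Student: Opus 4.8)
The plan is to show the natural comparison map from $\LLogDRL_{A,b}^\gl$ to the fiber product $\ast\times_{\G_\log}\LLogDRL(A,b,-b)$ is an isomorphism by checking it on $T$-valued points for $T$ an arbitrary log scheme; since all stacks in sight are categories fibred in groupoids over $\LogSch$ and the inclusion $\LLogDRL_{A,b}^\gl\hookrightarrow\LLogDRL(A,b,-b)$ of \ref{lem:logdrsplits} is a log monomorphism, it suffices to show its image inside $\LLogDRL(A,b,-b)(T)$ is exactly the locus $\{\delta_{A,b,-b}^\log=1\}$, naturally in $T$. Throughout write $\nu\colon\pie C\to C^\gl$ for the piercing-then-gluing of $p_{n-1}$ to $p_n$ and $q\in C^\gl$ for the resulting node, and recall $\nu^\ast\omega_{C^\gl}^{\log}=\omega_C^{\log}$ (pulling back $\omega_{C^\gl}$ across the node contributes $p_{n-1}+p_n$). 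Hence $\nu^\ast\omega_{C^\gl}^{\log,\tensor-k}=\omega_C^{\log,\tensor-k}$, so by functoriality of the $\G_m\to\G_\log$ extension of structure group the $\G_\log$-torsor $P/C$ attached to $\omega_C^{\log,\tensor-k}$ is canonically $\nu^\ast P^\gl$, where $P^\gl/C^\gl$ is attached to $\omega_{C^\gl}^{\log,\tensor-k}$.

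First I would check the inclusion of \ref{lem:logdrsplits} factors through the fiber over $1$. In the functor of points of \ref{lem:funcpointsdr} that inclusion sends $(C;P^\gl,f^\gl)$ to $(C;P=\nu^\ast P^\gl,\ f=\nu^\ast f^\gl)$; since $f^\gl$ is a genuine section over $C^\gl$, the points $f(p_{n-1})$ and $f(p_n)$ both equal $f^\gl(q)\in P^\gl|_q$. The two residue trivialisations $\res_{n-1},\res_n\colon P^\gl|_q\isom\G_\log$ differ by $(-1)^k$ (the remark after \ref{def:ev}), whence $\ev_n/\ev_{n-1}=(-1)^k$ and therefore $\delta_{A,b,-b}^\log=(-1)^k\cdot(-1)^k=1$. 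This produces the comparison map $\LLogDRL_{A,b}^\gl\to\ast\times_{\G_\log}\LLogDRL(A,b,-b)$.

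Next I would build the inverse. Given $(C;P,f)\in\LLogDRL(A,b,-b)(T)$ with $\delta_{A,b,-b}^\log=1$: write $P=\nu^\ast P^\gl$ as above. Unwinding $(-1)^k\,\ev_n/\ev_{n-1}=1$ against the residue sign shows precisely that $f(p_{n-1})=f(p_n)$ as points of $P^\gl|_q$, while the slopes of $f$ at $p_{n-1},p_n$ are $b,-b$ and sum to $0$; so the relative form of the gluing universal property \ref{thm:gluing}, applied to the log stack $P^\gl\to C^\gl$, yields a unique section $f^\gl\colon C^\gl\to P^\gl$ with $\nu^\ast f^\gl=f$. It has slopes $A$ on $p_1,\dots,p_{n-2}$, slope $b$ along the glued edge oriented from $p_{n-1}$ to $p_n$ (the slope of $f$ at $p_{n-1}$), and $\res(f^\gl(p_1))=\res(f(p_1))=1$; hence $(C;P^\gl,f^\gl)\in\LLogDRL(A)(T)$ with slope $b$ along the glued edge, i.e.\ a $T$-point of $\LLogDRL_{A,b}^\gl$. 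Pullback along $\nu$ and the descent bijection of \ref{thm:gluing} are mutually inverse and natural in $T$, so the two constructions are inverse and the square is cartesian.

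The step I expect to be the main obstacle is reconciling the gluing condition with the fiber condition: one must pin down the relative version of \ref{thm:gluing} for sections of the torsor $P^\gl\to C^\gl$ (rather than maps to a fixed log stack), and keep the two residue trivialisations at $q$ consistently labelled so that "$f(p_{n-1})=f(p_n)$ in $P^\gl|_q$" is literally "$\delta_{A,b,-b}^\log=1$". The remaining ingredients — the slope bookkeeping, the normalisation $\res(f(p_1))=1$, and the identification $\nu^\ast P^\gl=P$ — transport along $\nu$ with no extra work.
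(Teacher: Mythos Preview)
Your proposal is correct and follows essentially the same approach as the paper: both reduce to the gluing universal property \ref{thm:gluing} applied to the map $\pie C \to P^\gl$, noting that the opposite-slope condition holds by construction and the matching-evaluation condition is exactly $\delta_{A,b,-b}^\log = 1$. The paper's proof is much terser (two short paragraphs), leaving implicit the torsor identification $\nu^\ast P^\gl \cong P$, the $(-1)^k$ sign bookkeeping, and the passage from maps into $P^\gl$ to sections of $P^\gl \to C^\gl$ that you spell out carefully.
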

\begin{proof}
A curve $((C/S,p_1,\dots,p_n), P/C, f: C \to P)$ lies in $\LLogDRL_{A,b}^\gl$ if and only if $f$ lifts to a map $f^\gl: C^\gl \to P^\gl$, where $P^\gl$ is the $\G_\log$ torsor on $C^\gl$ coming from $\omega_{C^\gl}^{\log,\tensor - k}$.

By the universal property of gluing \ref{thm:gluing} for the map $C \to P^\gl$, this is equivalent to having opposite slopes and matching evaluation maps at $p_{n-1},p_n$. The opposite slope condition is true by construction, and the evaluations match if and only if $\delta_{A,b,-b}^\log$ takes the value $1$.
\end{proof}

This induces a perfect obstruction theory of virtual relative codimension $1$ on the map $\LLogDRL_{A,b}^\gl \to \LLogDR(A,b,-b)$, given by the Gysin pullback of $1 \to \G_\log$.

\subsection{Comparison of the virtual fundamental class}
\label{subs:compvfc}
Consider the virtual fundamental class $[\LLogDRL_{A,b}^\gl]^\vfc$, given by restriction of $\gl^* \LLogDR(A)$ to the connected component $\LLogDRL_{A,b}^\gl$ defined in \ref{def:complog}. \ref{prop:glogpullback} shows that that $\LLogDRL_{A,b}^\gl \to \LLogDR(A,b,-b)$ has a perfect obstruction theory of virtual relative codimension $1$, coming from the Gysin pullback of the embedding $1 \to \G_\log$. In this subsection we will show a compatibility between this perfect obstruction and the virtual fundamental classes on source and target.

\begin{proposition}
\label{prop:vfceq}
We have an equality of virtual fundamental classes
\[
[\LLogDRL_{A,b}^\gl]^\vfc = \delta_{A,b,-b}^{\log,!} [\LLogDRL({A,b,-b})]^\vfc.
\]
\end{proposition}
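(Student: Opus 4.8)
The statement is a compatibility between two virtual fundamental classes, so the strategy is to exhibit both of them as Gysin pullbacks of a common class along a common map, and then apply the functoriality of refined Gysin pullback for compatible perfect obstruction theories. Concretely, recall that $[\LLogDRL_{A,b}^\gl]^\vfc = \gl^! \LLogDRL_A^\gl$ is defined by pulling back $[\LLogDRL(A)]^\vfc = e^![\Mpt_{g+1,n-2}]$ along $\gl$, while $[\LLogDRL(A,b,-b)]^\vfc = j_* e^! [\Mpt_{g,n}]$ is the double ramification class on the smaller moduli space. Both ultimately come from the same unit section of a logarithmic Jacobian, so the plan is to organise everything into a diagram of cartesian squares in which the two Gysin operators in question are the two ``legs'' of a square-composition, and then invoke compatibility of Gysin pullbacks (as in the set-up of \cite{Barrott2019Logarithmic-Cho}) to commute them past each other.

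First I would assemble the master diagram. On one side there is the defining square of $\LLogDRL(A)$ over $\Mpt_{g+1,n-2}$, with vertical map the unit section $e$ of $\LLogJ_{g+1,n-2}$; pulling this back along $\gl: \Mpt_{g,n} \to \Mpt_{g+1,n-2}$ gives a square whose Gysin pullback computes $[\LLogDRL_{A,b}^\gl]^\vfc$ on the component cut out by the slope function $s$. On the other side there is the defining square of $\LLogDRL(A,b,-b)$ over $\Mpt_{g,n}$, with vertical map the unit of $\LLogJ_{g,n}$. The bridge between them is \ref{prop:glogpullback}: the inclusion $\LLogDRL_{A,b}^\gl \hookrightarrow \LLogDRL(A,b,-b)$ from \ref{lem:logdrsplits} is the pullback of the unit section $j: * \to \G_\log$ along $\delta_{A,b,-b}^\log$. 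So I would build the commutative cube whose front face is \ref{prop:glogpullback}, whose back face involves the two unit sections of the two log Jacobians, and whose side faces express how $\gl$ factors the log Jacobian $\LLogJ_{g+1,n-2}$ relative to $\LLogJ_{g,n}$ together with the extra $\G_\log$-coordinate recording the node. The key geometric input is that, tropically (and hence via the exact sequences of \ref{subsec:back:jac} and \ref{lem:loglinebundlekern}), pulling back along $\gl$ splits off exactly one $\G_\log$-factor — the slope $b$ at the new node together with the evaluation comparison $\ev_{n-1}/\ev_n$ — so that $\gl^!$ on the double ramification locus factors as: first restrict to the slope-$b$ component, then apply $\delta_{A,b,-b}^{\log,!}$, i.e. the Gysin pullback of $j: * \to \G_\log$.

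Once the cube is in place, the proof is a diagram chase: both composites of Gysin operators around the cube equal the refined Gysin pullback associated to the total square, by the commutativity and functoriality of $(-)^!$ for cartesian squares of the relevant log stacks (the maps $e$ and $j$ are unit sections of (log blowups of) abelian-type group objects, hence lci/regular embeddings after suitable log blowup, so the Gysin theory of \cite{Barrott2019Logarithmic-Cho} applies), together with the fact that the excess/obstruction bundles add up correctly — the obstruction theory on $\LLogDRL_{A,b}^\gl \to \LLogDRL(A,b,-b)$ induced by \ref{prop:glogpullback} is exactly of virtual relative codimension $1$, matching the difference of virtual dimensions. I would make sure the normal bundle bookkeeping is explicit: $e^!$ on $\Mpt_{g+1,n-2}$ has virtual codimension equal to $g+1$ (the dimension of the generic Jacobian fiber), $e^!$ on $\Mpt_{g,n}$ has virtual codimension $g$, and the missing $1$ is supplied precisely by $\delta_{A,b,-b}^{\log,!}$, the Gysin pullback of the codimension-$1$ regular embedding $1 \to \G_\log$; this is the arithmetic that forces the cube to commute on the nose rather than up to an excess class.

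\textbf{Main obstacle.} I expect the technical heart to be justifying that all the squares in the cube are genuinely cartesian \emph{in the relevant $2$-category of (qs, non-representable) log stacks}, and that the Gysin operators commute there — the log Jacobians $\LLogJ$ and the group $\G_\log$ are not algebraic, only dominable, so one must work with the homological $\LogCH_*$ formalism and the virtual pullbacks of \cite{Barrott2019Logarithmic-Cho}, checking base-change compatibility for those. In particular the subtle point is that the factorisation of $\gl^*\LLogJ_{g+1,n-2}$ along $\Mpt_{g,n}$ into ``$\LLogJ_{g,n}$ times the node-$\G_\log$'' is only valid after restricting to the double ramification locus (where $\alpha_A^\theta$ becomes strict piecewise linear, by the remark after \ref{def:alphabtheta} and the argument in \ref{lem:bounded}); off that locus the tropical picture genuinely involves more than one $\G_\log$-coordinate, so the identification of obstruction theories has to be carried out there and then transported, using \ref{prop:glogpullback} to control the fiber product. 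Everything else — the normal bundle arithmetic and the final chase — is formal once this cartesianness is nailed down.
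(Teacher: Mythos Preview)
Your plan is correct and is essentially the paper's approach: factor the Gysin pullback defining $[\LLogDRL_{A,b}^\gl]^\vfc$ through an intermediate codimension-$g$ step, identify that step with the $e^!$ defining $[\LLogDRL(A,b,-b)]^\vfc$, and then compare the remaining codimension-$1$ normal bundles. The paper resolves your ``main obstacle'' not via a product decomposition of $\gl^*\LLogJ_{g+1,n-2}$ but by passing to the open $\LLogJ_{g,n}^{\gl,\circ}$ where $\theta$-stability survives pullback (so the comparison map $\gl^*\colon \LLogJ_{g,n}^{\gl,\circ}\to\LLogJ_{g,n}$ actually exists) and introducing the fiber $I_{g,n} = \Mpt_{g,n}\times_{\LLogJ_{g,n}}\LLogJ_{g,n}^{\gl,\circ}$; the final identification of the two codimension-$1$ Gysin maps $i^!$ and $j^!$ is then done by exhibiting a small diagram showing both normal bundles restrict to the trivial line bundle on $\LLogDRL_{A,b}^\gl$.
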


To prove this, we will study the behaviour of $\LLogJ_{g,n}^\gl = \gl^*\LLogJ_{g+1,n-2}$. 

For this, one would like to write down a map $\LLogJ^\gl_{g,n} \to \LLogJ_{g,n}$ extending the pullback map on degree zero line bundles, but such a map does not exist (see for example \cite[Section~3.2]{kass2008lecture}). For comparing the obstruction theories however, it suffices to work with opens containing $\LLogDRL_{A,b}$.

One of the obstructions for the pullback map to extend is that $\theta$-stable line bundles on $C^\gl$ are not necessarily stable when pulled back to $C$. Tautologically, the pullback map does extend to $\theta$-stable line bundles that remain $\theta$-stable after pullback.

\begin{definition}
Let $\LLogJ_{g,n}^{\gl,\circ} \subset \LLogJ_{g,n}^\gl$ be the open parametrising $\theta$-stable line bundles on $C^\gl$ that are $\theta$-stable when pulled back to $C$, and let \[\gl^*: \LLogJ_{g,n}^{\gl,\circ} \to \LLogJ_{g,n}\] be the corresponding pullback map.

Let $I_{g,n} = \Mpt_{g,n} \times_{\LLogJ_{g,n}} \LLogJ_{g,n}^{\gl,\circ}$ be the space of $\theta$-stable line bundles on $C^\gl$ that pull back to the trivial line bundle.
\end{definition}

\begin{definition}
Write $\LLogDRL({A,b,-b})^\circ \subset \LLogDRL({A,b,-b})$ and $\Mpt_{g,n}^\circ \subset \Mpt_{g,n}$ for the logarithmic opens defined by the vanishing of $\delta_{A,b,-b}^\theta$. Write $\alpha_{A,b,-b}^{\gl,\theta}$ be the piecewise linear function $C^\gl/\Mpt_{g,n}^\circ$ obtained by gluing $\alpha_{A,b,-b}^\theta$.

Write $\sigma_b$ for the map
\begin{align*}
\Mpt_{g,n}^\circ &\to \LLogJ_{g,n}^{\gl} \\
C/S &\mapsto \omega_{C_{\gl}}^{\log,\tensor -k}(\alpha_{A,b,-b}^{\gl,\theta})
\end{align*}
\end{definition}
% Note that $\sigma_b$ lands inside $\LLogJ_{g,n}^{\gl,\circ}$.

\begin{lemma}
The map $\sigma_b$ factors through the open $\LLogJ_{g,n}^{\gl,\circ}$. We have isomorphisms of log line bundles
\[
\sigma_b(C/S) \cong \omega_{C_{\gl}}^{\log,\tensor -k}(\sum_i a_i) \in \LLogJ_{g,n}^{\gl,\circ}
\]
and
\[
(\gl^* \circ \sigma_b)(C/S) \cong \omega_{C}^{\log,\tensor -k}(\sum_i a_i + b p_{n-1} - b p_n) \in \LLogJ_{g,n}.
\]
% is given by
% \[
% C/S \mapsto \Ocal_C(\sum_{i=1}^{n-2} a_i p_i + b p_{n-1} - b p_n). 
% \]
\end{lemma}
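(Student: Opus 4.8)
The plan is to unwind the three isomorphisms one at a time, using \ref{lem:linebundlepl} (the ``$\alpha$ versus $\sum b_i p_i$'' comparison) as the main tool, and then invoking the definition of $\LLogJ_{g,n}^{\gl,\circ}$ for the factorisation statement. First I would observe that by construction of $\alpha_{A,b,-b}^\theta$ (\ref{def:alphabtheta}, condition (2)) the line bundle $\omega_{C}^{\log,\tensor -k}(\alpha_{A,b,-b}^\theta)$ on $C$ is $\theta$-stable, and hence so is its gluing $\omega_{C^\gl}^{\log,\tensor -k}(\alpha_{A,b,-b}^{\gl,\theta})$ on $C^\gl$: gluing a $\theta$-stable multidegree on $C$ (where $p_{n-1},p_n$ carry slopes $b,-b$ summing to $0$ across the new node) produces a $\theta$-stable multidegree on $C^\gl$ for the induced stability condition on the stratum, as recalled at the end of \ref{subsec:back:jac}. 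This shows $\sigma_b(C/S)$ is a legitimate point of $\LLogJ_{g,n}^\gl$.

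Next, for the first displayed isomorphism, I would apply \ref{lem:linebundlepl} directly on $C^\gl$: with $B = (A,b,-b)$ and the induced stability condition, the lemma gives $\Ocal_{C^\gl}(\alpha_{A,b,-b}^{\gl,\theta})$ isomorphic, as a log line bundle, to $\Ocal_{C^\gl}(\sum_i a_i p_i + b p_{n-1} - b p_n)$; but on $C^\gl$ the points $p_{n-1}$ and $p_n$ have been identified, so $b p_{n-1} - b p_n$ is the trivial divisor and this simplifies to $\Ocal_{C^\gl}(\sum_i a_i p_i)$. Tensoring with $\omega_{C^\gl}^{\log,\tensor -k}$ yields $\sigma_b(C/S) \cong \omega_{C^\gl}^{\log,\tensor -k}(\sum_i a_i p_i)$ as log line bundles. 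To see this lands in the open $\LLogJ_{g,n}^{\gl,\circ}$, I would then note that the pullback along $C \to C^\gl$ of this log line bundle is $\omega_{C}^{\log,\tensor -k}(\sum_i a_i p_i + b p_{n-1} - b p_n)$ (the relative log dualising sheaf pulls back to the relative log dualising sheaf since the gluing is at sections in the smooth locus, and the pullback of the gluing of $\alpha_{A,b,-b}^\theta$ recovers $\alpha_{A,b,-b}^\theta$ itself, whose associated line bundle is $\Ocal_C(\sum_i a_i p_i + b p_{n-1} - b p_n)$ up to a PL function with zero slopes on the legs by \ref{lem:linebundlepl}); since $\alpha_{A,b,-b}^\theta$ was chosen so that $\omega_C^{\log,\tensor -k}(\alpha_{A,b,-b}^\theta)$ is $\theta$-stable, this pullback is $\theta$-stable, which is exactly the condition defining $\LLogJ_{g,n}^{\gl,\circ}$. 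Hence $\sigma_b$ factors through the open, and the computation just performed is precisely the third displayed isomorphism $(\gl^*\circ\sigma_b)(C/S) \cong \omega_C^{\log,\tensor -k}(\sum_i a_i p_i + b p_{n-1} - b p_n)$.

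The only genuinely delicate point is the compatibility of $\theta$-stability with the gluing operation --- i.e.\ that a $\theta$-stable multidegree on $C$ with balanced slopes $b,-b$ at the glued pair glues to a $\theta$-stable multidegree on $C^\gl$ for the restricted stability condition. I expect this to be the main obstacle, and I would handle it by appealing to the description of the induced stability condition on strata from \cite{Kass2017The-stability-s} (cf.\ \cite[Section~1.6]{Holmes2022Logarithmic-double}): the stability inequalities on $C^\gl$ involving a subcurve $Z$ correspond, after passing to the normalisation at the new node, to the inequalities on $C$ for the pullback of $Z$, with the slope contribution at the node accounted for symmetrically by $b$ and $-b$ so that it cancels. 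Everything else is a bookkeeping exercise in $\PL$ functions and the residue trivialisations, and poses no real difficulty; I would keep that part brief.
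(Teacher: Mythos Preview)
Your proposal is correct and follows the same route as the paper: both reduce the two isomorphisms to \ref{lem:linebundlepl} (with $B=A$ on $C^\gl$ and $B=(A,b,-b)$ on $C$, respectively) and obtain the factorisation from the observation that $\alpha_{A,b,-b}^{\gl,\theta}$ pulls back along $C\to C^\gl$ to $\alpha_{A,b,-b}^\theta$, whose associated line bundle is $\theta$-stable by construction. The paper's proof does not single out your ``delicate point'' about $\theta$-stability on $C^\gl$ at all---it simply checks stability of the pullback and concludes---and for the first isomorphism it applies \ref{lem:linebundlepl} directly with $B=A$ (on $C^\gl$ the slopes $b,-b$ live on the new internal edge, not on legs), so your detour through $bp_{n-1}-bp_n$ and cancellation at the node is unnecessary.
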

\begin{proof}
The piecewise linear function $\alpha_{A,b,-b}^{\gl,\theta}$ pulls back to $\alpha_{A,b,-b}^{\theta}$, so the line bundle $\Ocal_{C^\gl}(\alpha_{A,b,-b}^{\gl,\theta})$ pulls back to $\Ocal_C(\alpha_{A,b,-b}^{\theta})$. This is $\theta$-stable by \ref{def:alphabtheta}, hence $\Ocal_{C^\gl}(\alpha_{A,b,-b}^{\gl,\theta})$ lies inside $\LLogJ_{g,n}^{\gl,\circ}$. The statements on the isomorphisms of log line bundles is exactly \ref{lem:linebundlepl} for $B = A$ and $B = (A,b,-b)$ respectively. 
\end{proof}

\begin{remark}
The map $\sigma_b: \LLogDRL({A,b,-b})^\circ \to \LLogJ_{g,n}^{\gl,\circ}$ extends naturally to a map $\LLogDRL({A,b,-b}) \to \LLogJ_{g,n}^{\gl}$, given by gluing the $\G_\log$-torsor $P/C$ using the transition function $\delta_{A,b,-b}^\log: \LLogDRL({A,b,-b}) \to \G_\log$. This torsor has bounded monodromy, as the monodromy around the new loop comes from a piecewise linear function on the unglued curve. This map does not factor through the embedding $\LLogJ_{g,n}^{\gl,\circ} \subset \LLogJ_{g,n}^{\gl}$.
\end{remark}

% Finally, we define 

All in all, we obtain the following commutative diagram

% We have a commutative diagram
\begin{equation}
\label{eq:thediag}
\begin{tikzcd}
\LLogDRL_{A,b}^\gl \arrow[r] \arrow[d] & \LLogDRL({A,b,-b})^\circ \arrow[r] \arrow[d, "\sigma_b"] & \Mpt_{g,n}^\circ \arrow[d, "\sigma_b"]\\
\Mpt_{g,n} \arrow[r, "i"] & I_{g,n} \arrow[r, "e'"] \arrow[d, "\gl^*"] &\LLogJ^{\gl,\circ}_{g,n} \arrow[d, "\gl^*"] \\
& \Mpt_{g,n} \arrow[r,"e"]& \LLogJ_{g,n} 
\end{tikzcd}
\end{equation}
where all squares are fiber squares.
% where $e^\gl$ and $e$ are the units of $\LLogJ^{\gl}_{g,n}$ and $\LLogJ_{g,n}$ respectively, and all squares are fiber squares.
% \end{proposition}

We remark that $i$ is a regular closed embedding of codimension $1$, and both $e$ and $e' = \gl^* e$ are regular closed embeddings of codimension $g$. This can be checked on the open of multidegree $0$ line bundles, where it follows from the short exact sequence
\[
0 \to \G_m \to \gl^* \Jac_{g+1,n-2}^{\ul{0}} \to \Jac_{g,n}^{\ul{0}} \to 0.
\]
and the fact that $\Jac_{g,n}^{\ul{0}}$ is smooth of dimension $g$.

\begin{proof}[Proof of \ref{prop:vfceq}]
The virtual fundamental class $[\LLogDRL_{A,b}^\gl]^\vfc$ is defined as the Gysin pullback $(e'i)^! [\Mpt_{g,n}^\circ]$, where $e',i$ are the maps in \ref{eq:thediag}. By functoriality we have $(e'i)^! = i^!e'^! $, and as $e'$ is the base change of $e$ and $e,e'$ are both regular closed embeddings of codimension $g$, we have $e'^! = e^!$.

Then note that $e^! [\Mpt_{g,n}^\circ] = [\LLogDRL({A,b,-b})^\circ]^\vfc$. It remains to show an equality of the two Gysin pullbacks
\[
i^!, j^! : \LogCH(\LLogDRL(A,b,-b)) \to \LogCH(\LLogDRL_{A,b}^\gl)
\] 
defined by the two fiber squares
\[
\begin{tikzcd}
\LLogDRL_{A,b}^\gl \arrow[r] \arrow[d] \pb & \LLogDRL_{A,b,-b}^\circ \arrow[d, "\sigma_b"]\\ 
\Mpt_{g,n} \arrow[r, "i"] & I_{g,n}
\end{tikzcd}
\qquad
\begin{tikzcd}
\LLogDRL_{A,b}^\gl \arrow[r] \arrow[d] \pb & \LLogDRL_{A,b,-b} \arrow[d, "\delta_{A,b,-b}^\log"]\\ 
* \arrow[r, "j"] & \G_\log
\end{tikzcd}
\]

Note that $i$ and $j$ are both regular embeddings of codimension $1$. It suffices to check compatibility of the pullback of the normal bundles of $i, j$. This follows from the commutativity of the following diagram of fiber squares
\[
\begin{tikzcd}
& \LLogDRL_{A,b}^\gl \arrow[r] \arrow[d] & \LLogDRL_{A,b}^\gl \times \G_m \arrow[d] &\\
& \Mpt_{g,n} \arrow[r] \arrow[rd] \arrow[ld] & \Mpt_{g,n} \times \G_m \arrow[rd] \arrow[ld] & \\
\Mpt_{g,n} \arrow[r, "i"] & I_{g,n} & * \arrow[r, "j"] & \G_\log 
\end{tikzcd}
\]
\end{proof}

\begin{remark}
In fact, we see that the virtual normal bundles $\sigma_b^* N_{\Mpt_{g,n}/I_{g,n}}$ and $\delta_{A,b,-b}^{\log,*} N_{*/\G_\log}$ for the map $\LLogDRL_{A,b}^\gl/\LLogDRL_{A,b}^\gl$ are both the trivial line bundle on $\LLogDRL_{A,b}^\gl$.
\end{remark}

\subsection{Proof of \ref{thm:firstformula}}
\label{subsec:proof}

We now can compute the class $\delta_{A,b,-b}^{\log,!} [\LLogDRL_{A,b,-b}]^\vfc$. 

\begin{lemma}
\label{lem:1eq0}
Write $\pi: \LLogDRL_{A,b}^\gl \to \Mpt_{g,n}$. Then we have
\begin{align*}
\pi_*(\delta_{A,b,-b}^{\log,!} [\LLogDRL_{A,b,-b}]^\vfc) &= \frac12 \LLogDR_{A,b,-b}\cdot \Phi(|\delta_{A,b,-b}^\theta|) \\
&= \LLogDR_{A,b,-b}\cdot \Phi(\max(\delta_{A,b,-b}^\theta,0)) \\
&= \LLogDR_{A,b,-b}\cdot \Phi(\max(-\delta_{A,b,-b}^\theta,0)).
\end{align*}
\end{lemma}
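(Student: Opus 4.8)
The goal is to compute the pushforward to $\Mpt_{g,n}$ of the Gysin pullback $\delta_{A,b,-b}^{\log,!}[\LLogDRL_{A,b,-b}]^\vfc$ along the fiber square of \ref{prop:glogpullback}. The key point is that $\delta_{A,b,-b}^\log$ is a map to $\G_\log$, and that $\G_\log$ admits the log blowup $\P^1 \to \G_\log$. My plan is to reduce the Gysin pullback along $1 \to \G_\log$ to an intersection with a Cartier divisor on a suitable log blowup of $\LLogDRL_{A,b,-b}$. Concretely, I would pull back everything along the blowup $\P^1 \to \G_\log$: after a log blowup $\widetilde{\LLogDRL} \to \LLogDRL_{A,b,-b}$ on which the (strict transform of the) map $\delta_{A,b,-b}^\log$ lifts to a morphism $\widetilde{\delta}\colon \widetilde{\LLogDRL} \to \P^1$, the locus $\LLogDRL_{A,b}^\gl$ is cut out (up to the log blowup) by $\widetilde{\delta}^{-1}(1)$. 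Since $[1] \in \CH(\P^1)$ can be represented as the class of the Cartier divisor $\{1\}$, and since $[1] = [0]$ and $[1] = [\infty]$ in $\CH(\P^1)$, the Gysin class $\widetilde{\delta}^! [\widetilde{\LLogDRL}]$ equals the intersection of $[\widetilde{\LLogDRL}]$ with the pullback of any of $[0], [1], [\infty]$.

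Next I would identify these divisor pullbacks in terms of $\delta_{A,b,-b}^\theta$. The map $\P^1 \to \G_\log$ is the blowup whose tropicalization is $[\P^1/\G_m] \to \G_\trop$; tropically the divisor $[0]$ on $\P^1$ corresponds to the piecewise linear function $\max(x,0)$ on $\Sigma_{\P^1}$, and $[\infty]$ corresponds to $\max(-x,0)$, where $x$ is the coordinate linear function (this is exactly the statement recalled in the introduction, that $[1] = \Phi(\max(x,0)) = \Phi(\max(-x,0))$). Pulling back along the tropicalization of $\widetilde\delta$, which by the lemma preceding \ref{prop:glogpullback} is $\delta_{A,b,-b}^\theta|_{\LLogDRL(A,b,-b)}$, one gets that $\widetilde{\delta}^*[0]$ represents $\Phi(\max(\delta_{A,b,-b}^\theta,0))$ and $\widetilde{\delta}^*[\infty]$ represents $\Phi(\max(-\delta_{A,b,-b}^\theta,0))$ in $\LogCH(\LLogDRL(A,b,-b))$. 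Averaging (using $[1] = \tfrac12([0]+[\infty])$) gives the $\tfrac12|\delta_{A,b,-b}^\theta|$ expression. By \ref{prop:vfceq} we have $[\LLogDRL_{A,b}^\gl]^\vfc = \delta_{A,b,-b}^{\log,!}[\LLogDRL(A,b,-b)]^\vfc$, so pushing forward to $\Mpt_{g,n}$ and using the projection formula together with the identity $\pi_*[\LLogDRL(A,b,-b)]^\vfc = \LLogDR(A,b,-b)$ (the definition of $\LLogDR$ in \ref{def:pierceddr}) yields the three claimed equalities.

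The main obstacle I anticipate is the compatibility of the Gysin pullback along $1 \to \G_\log$ (a map to a non-representable, non-algebraic log stack) with the operation $\Phi$ on piecewise linear functions and with intersection on log blowups — i.e. making precise that ``$\delta^!$ of the fundamental class equals the $\Phi$-class of the pullback PL function'' in the bivariant $\LogCH$ framework of \cite{Barrott2019Logarithmic-Cho}. This requires knowing that the log blowup on which $\delta_{A,b,-b}^\log$ becomes a $\P^1$-valued morphism can be chosen compatibly with the one presenting $\LLogDR(A,b,-b)$, and that the resulting Cartier divisor class is independent of the choice of representative $[0], [1], [\infty]$ — the latter being exactly where the three-way equality (and the nontrivial vanishing relation $\LLogDR(A,b,-b)\cdot\delta_{A,b,-b}^\theta = 0$ noted in the remark after \ref{thm:firstformula}) comes from. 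A secondary subtlety is that $\delta_{A,b,-b}^\theta$ is not strict piecewise linear on all of $\Mpt_{g,n}$ but only on $\LLogDRL(A,b,-b)$, so the computation genuinely must be performed after restricting to the double ramification locus, where \cite{Holmes2022Logarithmic-double} provides the explicit subdivisions supporting $\delta_{A,b,-b}^\theta$.
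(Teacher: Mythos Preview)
Your approach is essentially identical to the paper's: factor $\pi$ through $\LLogDRL_{A,b,-b}$, use the rational equivalences $[1]=[0]=[\infty]=\tfrac12([0]+[\infty])$ on $\P^1$ to rewrite the Gysin pullback along $1\to\G_\log$ as capping with $\Phi(\max(\pm x,0))$ or $\tfrac12\Phi(|x|)$, pull back the universal PL function $x$ to $\delta_{A,b,-b}^\theta$ via the tropicalization lemma, and then apply the projection formula for the map $\LLogDRL_{A,b,-b}\to\Mpt_{g,n}$. One small point: you do not need to invoke \ref{prop:vfceq} here, since the lemma as stated is already about $\delta_{A,b,-b}^{\log,!}[\LLogDRL_{A,b,-b}]^\vfc$ rather than $[\LLogDRL_{A,b}^\gl]^\vfc$; that identification is used only afterwards in the proof of \ref{thm:firstformula}.
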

\begin{proof}
Write $\pi_1: \LLogDRL_{A,b}^\gl \to \LLogDRL_{A,b,-b}$, and $\pi_2: \LLogDRL_{A,b,-b} \to \Mpt_{g,n}$, so that $\pi = (\pi_2 \circ \pi_1)$.

Note that $[1] \in \P^1$ is equivalent to $\frac12([0] + [\infty]), [0]$ and $[\infty]$, and correspondingly we have $[1] = \frac12 \Phi(|x|) = \Phi(\max(x,0)) = \Phi(\max(-x,0))$ where $x$ is the universal PL function on $\G_\log$. Then $x$ pulls back to the piecewise linear function $\ol{\delta_\log} = \delta_{A,b,-b}^\theta$ on $\LLogDRL_{A,b,-b}$, so 
\begin{align*}
\pi_{1,*}\delta_{A,b,-b}^{\log,!} [\LLogDRL_{A,b,-b}]^\vfc = [\LLogDRL_{A,b,-b}]^\vfc \cdot y
\end{align*}
for any $y \in \{\frac12 \Phi(|x|), \Phi(\max(x,0)), \Phi(\max(-x,0))\}$. The lemma follows by the projection formula for $\pi_2$.
\end{proof}

\begin{proof}[Proof of \ref{thm:firstformula}]
By \ref{def:complog} the class $\gl^* \LLogDR(A)$ splits as a sum
\[
\sum_{b \in \Z} \LLogDR_{A,b}^\gl
\]
where the vanishing of the terms follows from \ref{lem:bounded}.

The formulas for $\LLogDR_{A,b}^\gl$ follow from \ref{prop:vfceq} and \ref{lem:1eq0}.
\end{proof}

\section{The pullback of the double ramification cycle}
\label{sec:pullbackDR}

In this section we will give a formula for the pushforward of $\gl^* \LLogDR(A)$ to $\CH(\Mbar_{g,n})$.

\begin{definition}
\label{def:banana}
A \emph{banana graph} is a connected graph $\Gamma$ of type $(g,n)$ with $V(\Gamma) = \{v_1,v_2\}$ such that all edges are incident to $v_1$ and $v_2$. We further require leg $p_{n-1}$ to be incident to $v_1$, leg $p_n$ to be incident to $v_2$. For $A \in \Z^n$ with sum $k(2g-2+n)$ and $b \in \Z$, we write $S_{A,b}$ for the groupoid
\[
  \{(\Gamma,B)\}
\]
where $\Gamma$ is a banana graph, and $B \in \Z^{E(\Gamma)}$ with sum $-b - \sum_{i \in L(v_1)} b_i + k(2g(v_1) - 2 + n(v_1))$. If $b \geq 0$, we require $b_e > 0$ and if $b < 0$ we require $b_e < 0$.

For $(\Gamma,B) \in S_{A,b}$, we write $C_1$ for $((a_i)_{i \in L(v_1)},b,(b_e)_{e \in E(\Gamma)}) \in \Z^{H(v_1)}$ and 
$C_2$ for $((a_i)_{i \in L(v_2)},-b,(-b_e)_{e \in E(\Gamma)}) \in \Z^{H(v_2)}$.  
\end{definition}

\begin{theorem}
\label{thm:secondformula}
We have
\begin{equation}
\begin{split}
\gl^* \DR(A) = \sum_{b} \sum_{(\Gamma,B) \in S_{A,b}} \frac{\prod_{e \in E(\Gamma)}|b_e|}{\Aut(\Gamma,B)} \zeta_{\Gamma,*}\left(\DR(C_1)\tensor\DR(C_2)\right).
\end{split}
\end{equation}
\end{theorem}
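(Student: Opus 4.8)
The strategy is to push forward the logarithmic splitting formula of \ref{thm:firstformula} to $\CH(\Mbar_{g,n})$, computing both the pushforward of $\LLogDR(A,b,-b)$ and the pushforward of the correcting class $\Phi(\max(\delta_{A,b,-b}^\theta,0))$ against it. First I would fix $b \geq 0$ (the case $b < 0$ being symmetric, using the other expression in \ref{thm:firstformula}), and recall that $\pi_* \LLogDR(A,b,-b) = \DR(A,b,-b)$ where $\pi \colon \Mpt_{g,n} \to \Mbar_{g,n}$ is the forgetful isomorphism of underlying stacks, so the left-hand side of the claimed formula is $\pi_*(\gl^*\LLogDR(A))$. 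Since $\gl$ commutes with $\pi$ on underlying stacks and $\DR(A) = \pi_*\LLogDR(A)$, this is $\gl^*\DR(A)$ as desired.

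\textbf{Key step: expanding $\Phi(\max(\delta_{A,b,-b}^\theta,0))$.} The main work is to write $\Phi(\max(\delta_{A,b,-b}^\theta,0))$, restricted to $\LLogDRL(A,b,-b)$, as an explicit sum of boundary strata and show its product with $\LLogDR(A,b,-b)$ pushes forward to the banana-graph sum. On $\LLogDRL(A,b,-b)$ the function $\delta_{A,b,-b}^\theta$ is strict piecewise linear and $\theta$-independent, equal to the tropicalisation of $\ev_n/\ev_{n-1}$; I would describe the (finitely many) strata on which it is non-negative and compute $\max(\delta_{A,b,-b}^\theta,0)$ piecewise. Using \cite[Lemma~4.3]{Holmes2023LogarithmicCohomologicalFT}, $\Phi$ of a piecewise linear function is a combination of boundary divisors and $\psi$-classes; for a $\max$ of linear functions the relevant formula expresses $\Phi(\max(\delta,0))$ as a sum over codimension-one boundary divisors (those where $\delta$ changes sign) with multiplicities. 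Intersecting iteratively and pushing forward, each boundary stratum that meets $\LLogDRL(A,b,-b)$ nontrivially is a banana-type degeneration: the new loop edge of $\Mbar_{g+1,n-2}$ together with the bubbling forced by non-triviality of the line bundle produces exactly a banana graph $\Gamma$ with two vertices, legs $p_{n-1}$ on $v_1$ and $p_n$ on $v_2$, and edge-slopes $B = (b_e)_{e \in E(\Gamma)}$, all positive. The balancing condition at $v_1$ forces $\sum_e b_e = -b - \sum_{i \in L(v_1)} a_i + k(2g(v_1)-2+n(v_1))$, recovering the defining constraint of $S_{A,b}$, and the virtual class restricted to such a stratum is $\zeta_{\Gamma,*}(\DR(C_1)\otimes\DR(C_2))$ by the standard splitting of the (ordinary, non-logarithmic) double ramification cycle along boundary strata. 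The combinatorial factor $\prod_e |b_e| / \Aut(\Gamma,B)$ emerges as the product of the local multiplicities of $\Phi(\max(\delta,0))$ along each edge (each contributing $b_e$) divided by the automorphisms of the decorated graph, which also accounts for overcounting of ordered versus unordered edge data.

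\textbf{Main obstacle.} The hard part will be bookkeeping the passage from the logarithmic intersection on blowups down to $\CH(\Mbar_{g,n})$: a priori $\Phi(\max(\delta_{A,b,-b}^\theta,0))$ lives on a log blowup and its product with $\LLogDR(A,b,-b)$ must be shown to push forward cleanly, with no residual $\psi$-class or excess-intersection contributions surviving. Concretely one must verify that although $\Phi$ of a general piecewise linear function involves $\psi$-classes, in this particular product against the double ramification cycle all such terms either vanish or reorganise into the stated $\prod_e |b_e|$ coefficients — this is where Zvonkine's original argument and the explicit geometry of $\delta_{A,b,-b}^\theta$ (its strictness on the double ramification locus, so that locally it is genuinely piecewise \emph{linear} with controlled breaking locus) are essential. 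I would handle this by reducing, via the projection formula and properness of $\LLogDRL(A,b,-b) \to \Mbar_{g,n}$, to a stratum-by-stratum computation: on each banana stratum the normal-cone contributions are transverse and the log structure is toric, so the calculation is the elementary combinatorics of $\max$ of linear functions on a product of rays, giving exactly the claimed multiplicities. The remaining check is that distinct $b$ and distinct $(\Gamma,B)$ contribute to disjoint loci (immediate from \ref{def:complog} and the locally constant slope function $s$), so the sum is as stated with no cancellation.
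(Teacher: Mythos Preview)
Your overall strategy matches the paper's: push forward \ref{thm:firstformula} term by term and expand $\Phi(\max(\delta_{A,b,-b}^\theta,0))$ (for $b\ge 0$; the other sign for $b<0$) in boundary strata. However, the obstacle you flag as the ``hard part'' --- ruling out residual $\psi$-class contributions --- is exactly where the paper supplies a clean idea you are missing. The paper does \emph{not} argue stratum-by-stratum that $\psi$-terms cancel after intersecting with the DR cycle. Instead it observes that $\max(\delta,0)$ is pulled back along the projection $\tilde{\Mpt}_{g,n}^\trop = \tilde{\Mcal}_{g,n}^\trop \times \R_{\geq 0}^n \to \tilde{\Mcal}_{g,n}^\trop$ forgetting leg lengths: with the sign of $b$ chosen as above, $\delta$ is non-positive on every leg-length ray, so $\max(\delta,0)$ and the pullback of its restriction $\delta'$ to the zero section agree on all rays and hence coincide as piecewise linear functions. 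Since the $\psi$-classes in $\LogCH(\Mpt_{g,n})$ arise precisely from the leg-length directions (this is the content of \cite[Lemma~4.3]{Holmes2023LogarithmicCohomologicalFT}), it follows that $\Phi(\max(\delta,0))$ is a pure combination of boundary divisors on $\tilde{\Mcal}_{g,n}$, with no $\psi$-terms to begin with. Your proposed case analysis to make $\psi$-classes ``vanish or reorganise'' is unnecessary once this is seen.

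For the multiplicity $\prod_e |b_e|$, the paper's mechanism is also sharper than your sketch. Having reduced to $\delta'$ on $\tilde{\Mcal}_{g,n}^\trop$, the class $\Phi(\delta')$ is a sum of boundary divisors $[\Mbar_{\Gamma,B}]$ indexed by banana data, and the paper identifies the map $\LogDRL(A,b,-b)\cap\Mbar_{\Gamma,B} \to \LogDRL(C_1)\times\LogDRL(C_2)$ as a Kummer log \'etale $\mu_B$-torsor with $\mu_B=\prod_e\mu_{b_e}$, on which $\delta'$ induces a class of degree $\prod_e b_e$. This simultaneously supplies the product-of-slopes factor and the identification of the restricted virtual class with $\DR(C_1)\otimes\DR(C_2)$. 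Your appeal to a ``standard splitting of the ordinary DR cycle along boundary strata'' is circular here: that splitting is precisely what the theorem is establishing, and the torsor computation is what replaces it.
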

\begin{proof}
We fix a $b \in \Z$ and assume without loss of generality $b \geq 0$. We write $\tilde{\Mcal}_{g,n}$ and $\tilde{\Mpt}_{g,n}$ for the log blowups of $\Mbar_{g,n}$ and $\Mpt_{g,n}$ supporting $\LogDR(A,b,-b)$, defined in \cite[Section~4]{Holmes2022Logarithmic-double}.

Write $\delta = \delta_{A,b,-b}^\theta$, a strict piecewise linear function on $\tilde{\Mpt}_{g,n}^\trop$. We will compute $\LogDR(A,b,-b) \cdot \max(\delta,0)$.

Let $\pi$ denote the projection \[\tilde{\Mpt}_{g,n}^\trop = \tilde{\Mcal}_{g,n}^\trop \times \R_{\geq 0}^n \to \tilde{\Mcal}_{g,n}^\trop,\] and let $s$ denote the zero section of $\pi$. Let $\delta' = s^* \max(\delta,0)$ as a piecewise linear function on $\tilde{\Mcal}_{g,n}^\trop$. Note that $\delta$ is positive only on rays corresponding to banana graphs. Then $\max(\delta,0)$ and $\pi^* \delta'$ are piecewise linear functions that have the same values on rays, and hence $\max(\delta,0) = \pi^* \delta'$.

For clarity, we describe the piecewise linear function $\delta'$ on $\Mbar_{g,n}^\trop$. For a classical (marked, but not pierced) tropical curve $\Gamma \in \Mbar_{g,n}^\trop$, write $v_1,v_2$ for the vertices connected to $p_{n-1},p_n$ respectively. Write $\alpha = \alpha_{A,b,-b}^\theta$ for the piecewise linear function that $\theta$-stabilises $\Ocal(\sum_i a_i p_i + b p_{n-1} - b p_n)$. Then $\delta'(\Gamma) = \alpha(v_2) - \alpha(v_1)$.

For $(\Gamma,B) \in S_{A,b}$ write $\Mbar_{\Gamma,B}$ for the corresponding stratum in Note that on $\Mbar_{g,n}^\trop$, we then have that $\Phi(\delta')$ is a linear combination of the boundary divisors $[\Mbar_{\Gamma,B}]$, and it suffices to compute $[\Mbar_{\Gamma,B}] \cdot \LogDR(A,b,-b)$ with the correct multiplicity.

Write $\pi: \Mbar_{\Gamma,B} \to \Mbar_{g(v_1),n(v_1)} \times \Mbar_{g(v_2),n(v_2)}$ for the map to the stratum of $\Gamma$. This induces a finite map \begin{equation}\label{eq:mubtorsor}\pi: \LogDRL(A,b,-b) \cap \Mbar_{\Gamma,B} \to \LogDRL(C_1) \times \LogDRL(C_2).\end{equation} This map is a kummer log \'etale $\mu_B$-torsor, where we write $\mu_B = \prod_{i} \mu_{b_i}$. Over a geometric point in the open stratum, this $\mu_B$-torsor is isomorphic to the log scheme $(\delta')^{1/B} = \{(x_e)_{e\in E} : \forall e, x_e^{b_e} = \delta'\} \subset \A^E$ over the log point $(\pt, \N \cdot \delta')$.
Note that the piecewise polynomial $\delta'$ induces the class of degree $\prod_i b_i$ on $(\delta')^{1/B}$ (see \ref{rem:mubtorsor} for an example on what this torsor looks like). All in all, the virtual class on $\LogDRL(A,b,-b) \cap \Mbar_{\Gamma,B}$ pushes forward to
\[
  \Aut(\Gamma,B)^{-1} [\Gamma,\prod_e b_e \DR(C_1) \DR(C_2)].
\]
\end{proof}

\begin{remark}
\label{rem:mubtorsor}
Note that the kummer log \'etale $\mu_B$-torsor \ref{eq:mubtorsor} is always of log degree $\prod_{e} b_e$, and after taking an appropriate root stack is of geometric degree $\prod_e b_e$. The actual geometric structure before taking a root stack varies, depending on the gcds of the slopes $b_e$. Over the category of coherent log schemes (log schemes with charts that are not necessarily integral or saturated) the functor $(\delta')^{1/B} = \{(x_e)_{e\in E} : \forall e, x_e^{b_e} = \delta'\} \subset \A^E$ over $(\pt, \N \cdot \delta')$ is represented by $X = (\pt, M)$ where $M$ is the non-integral monoid $(\N \cdot \delta' \oplus \bigoplus_e \N \cdot \ell_e)/(b_e \ell_e = \delta')$, and the piecewise linear function $\delta'$ gives rise to the class $\prod_e b_e [X]$ of degree $\prod_e b_e$.

Over the category of fs log schemes, the same functor is represented by the fs-ification $Y$ of $X$. This has characteristic monoid $\N \cdot \frac{1}{m} \delta'$ where $m = \lcm(b_e)_{e \in E}$, and the underlying scheme $\ul{Y}$ is of degree $\frac{1}{m}\prod_{e} b_e$. Hence, the piecewise linear function $\delta'$ gives rise to the class $m[Y] \in \CH_*(Y)$ on $Y$. Again, this class has degree $\prod_e b_e$.

For example, take $B = (5,5)$. Then $Y$ has functor $\{(x,y) \in \A^2 : x^5 = y^5 = \delta'\}$, and by integrality this is isomorphic to $\{(x,z) \in \A^2 : x^5 = \delta', z^5 = 1\}$, representable by $(\pt,\N \cdot 1/5 \delta') \times \mu_5$.
\end{remark}

In the above proof, we use the equality $\LogDR_A,b^\gl = \LogDR(A,b,-b) \cdot \max(\delta_{A,b,-b}^{\theta},0)$ from \ref{thm:firstformula} for $b \geq 0$ and $\LogDR_A,b^\gl = \LogDR(A,b,-b) \cdot \max(-\delta_{A,b,-b}^{\theta},0)$ for $b < 0$. The equality $\LogDR(A,b,-b) \cdot \max(\delta_{A,b,-b}^{\theta},0) = \LogDR(A,b,-b) \cdot \max(-\delta_{A,b,-b}^{\theta},0)$ gives rise to a special case of the following tautological relation, first proven for $k = 0$ in \cite[Section~2]{zvonkine2015IntegralsPsiClasses} and for general $k$ in \cite[Proposition~3.1]{costantini2025integralspsiclassestwisteddouble}. The proof in \cite{costantini2025integralspsiclassestwisteddouble} is through strata of differentials and intricate combinatorics, proving it for a dense set of inputs and invoking polynomiality \cite{Pixton2023DRPoly,spelier2024polynomiality}. The theory of pierced curves allows us to give a very short proof.

% In the formula above, for $b \geq 0$ we use the  $\LogDR(A,b,-b) \cdot \max(\delta_{A,b,-b}^{\theta},0)$. The term $\LogDR(A,b,-b) \cdot \max(-\delta_{A,b,-b}^{\theta},0)$ gives the same class per \ref{thm:firstformula}, but gives rise to a different formula. In this way one obtains a special case of the following tautological relation $\CH^*(\Mbar_{g,n})$, first proven in \cite[Proposition~3.1]{CSS}. 

\begin{proposition}
\label{prop:splitpsi}
Fix $A \in \Z^n$ with sum $k(2g-2+n)$. Then we have
\[
(a_{n-1}\psi_{n-1} - a_n \psi_n) \DR(A) + \sum_{(\Gamma,B) \in S}  \frac{s(\Gamma)}{\Aut(\Gamma,B)} \prod_{e \in E(\Gamma)} |b_e| [\Gamma,\DR(C_1) \tensor \DR(C_2)] = 0
\]
where the sum is over the set $S_{a,b} = \{(\Gamma,B)\}$ where
\begin{itemize}
  \item $\Gamma$ is a banana graph with vertices $v_1,v_2$;
  \item $B \in \Z_{> 0}^E$ with sum $- \sum_{i \in L(v_1)} a_i + k(2g(v_1) - 2 + n(v_1))$;
\end{itemize}
and \[s(\Gamma) = \begin{cases}  -1 &\text{ if } p_{n-1} \in L(v_1) \wedge p_n \in L(v_2) \\ 1 &\text{ if } p_{n-1} \in L(v_2) \wedge p_n \in L(v_1) \\ 0 &\text{ else}\end{cases}\]
and we write $C_1$ for $((a_i)_{i \in L(v_1)},(b_e)_{e \in E(\Gamma)}) \in \Z^{H(v_1)}$ and $C_2$ for $((a_i)_{i \in L(v_1)},(-b_e)_{e \in E(\Gamma)}) \in \Z^{H(v_2)}$.
\end{proposition}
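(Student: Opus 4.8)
The plan is to derive Proposition~\ref{prop:splitpsi} from the two equal expressions for $\LLogDR_{A,b}^\gl$ in Theorem~\ref{thm:firstformula}. The key observation is that $\Phi(\max(\delta_{A,b,-b}^\theta,0)) - \Phi(\max(-\delta_{A,b,-b}^\theta,0)) = \Phi(\delta_{A,b,-b}^\theta)$, so the identity $\LLogDR(A,b,-b)\cdot\Phi(\max(\delta_{A,b,-b}^\theta,0)) = \LLogDR(A,b,-b)\cdot\Phi(\max(-\delta_{A,b,-b}^\theta,0))$ is equivalent to the relation $\LLogDR(A,b,-b)\cdot\Phi(\delta_{A,b,-b}^\theta) = 0$ in $\LogCH(\Mpt_{g,n})$. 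I would apply this in the case $b=0$ (this is the relevant slope for Proposition~\ref{prop:splitpsi}, whose sum is over banana graphs with $B\in\Z_{>0}^E$ of a fixed total, matching $S_{A,0}$ after identifying the leg slopes at the node with $0$). So the heart of the argument is: push forward the relation $\LLogDR(A,0,0)\cdot\Phi(\delta_{A,0,0}^\theta) = 0$ to $\CH(\Mbar_{g,n})$ and identify the resulting terms.

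Next I would unwind $\Phi(\delta_{A,0,0}^\theta)$ as a sum of a $\psi$-class contribution and boundary contributions. Over the interior of $\Mpt_{g,n}$ (or more precisely over the open where $p_{n-1},p_n$ lie on the same vertex), $\alpha_A^\theta$ is strict and $\delta_{A,0,0}^\theta = \alpha_A^\theta(p_n) - \alpha_A^\theta(p_{n-1}) = a_n\ell_n - a_{n-1}\ell_{n-1}$ by Definition~\ref{def:alphabtheta}(1),(3). Since $\Phi$ of the leg length $\ell_i$ is $-\psi_i$ (as in \cite[Lemma~4.3]{Holmes2023LogarithmicCohomologicalFT}), this contributes $-a_n\psi_n + a_{n-1}\psi_{n-1}$ against $\DR(A)$, giving the first term $(a_{n-1}\psi_{n-1} - a_n\psi_n)\DR(A)$. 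On the boundary rays corresponding to banana graphs $\Gamma$ where $p_{n-1}$ and $p_n$ land on different vertices, $\delta_{A,0,0}^\theta$ picks up the extra term $\alpha(v_2)-\alpha(v_1)$ through the inner edges, exactly as computed in the genus-$0$ example and in the proof of Theorem~\ref{thm:secondformula}; the sign of this contribution is $s(\Gamma)$ as defined, recording which vertex carries which of $p_{n-1},p_n$. The multiplicity $\prod_{e\in E(\Gamma)}|b_e| / \Aut(\Gamma,B)$ and the factor $\DR(C_1)\otimes\DR(C_2)$ then come from the same $\mu_B$-torsor computation \eqref{eq:mubtorsor} used in the proof of Theorem~\ref{thm:secondformula}: the stratum $\LogDRL(A,0,0)\cap\Mbar_{\Gamma,B}$ maps to $\LogDRL(C_1)\times\LogDRL(C_2)$ as a Kummer-étale torsor of log degree $\prod_e b_e$, and intersecting against the boundary divisor on which $\delta'$ is supported produces precisely that coefficient.

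Finally I would assemble: writing $\Phi(\delta_{A,0,0}^\theta)$ as $\Phi(\delta') $ pulled back along the projection $\Mpt^\trop\to\Mbar^\trop$ that forgets leg lengths, modified by the $\psi$-term from the leg lengths themselves, and using the projection formula for $\pi_2\colon\LLogDRL(A,0,0)\to\Mbar_{g,n}$ as in Lemma~\ref{lem:1eq0}, the identity $0 = \pi_*\big(\LLogDR(A,0,0)\cdot\Phi(\delta_{A,0,0}^\theta)\big)$ becomes exactly the stated relation. The main obstacle I anticipate is bookkeeping: matching the stability-dependent piecewise linear function $\delta^\theta_{A,0,0}$ on all boundary rays with the $\theta$-independent combinatorial data $(\Gamma,B)$, getting every sign ($s(\Gamma)$, the $(-1)^k$ in $\delta^{\log}$, the sign of $\Phi(\ell_i)=-\psi_i$) consistent, and confirming that the torsor degree and automorphism factors assemble into the coefficient displayed — essentially re-running the bookkeeping of Theorem~\ref{thm:secondformula} in the degenerate slope-$0$ case where the two $\max$-expressions are subtracted rather than used separately. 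Checking that the $\theta$-dependence genuinely cancels (so the relation is canonical) is the conceptual point that makes this clean: it follows because $\gl^*\LLogDR(A)$, hence each $\LLogDR_{A,b}^\gl$, is $\theta$-independent, and the difference of the two $\max$-formulas is the full $\Phi(\delta^\theta)$ whose class therefore cannot depend on $\theta$ either.
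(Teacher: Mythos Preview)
Your overall architecture matches the paper's: establish $\LLogDR(\cdot)\cdot\Phi(\delta)=0$ and then decompose $\Phi(\delta)$ into the $\psi$-term and the banana-graph boundary contributions. The second half of your argument (the ray decomposition, the $\mu_B$-torsor degree, the automorphism factor) is essentially what the paper does.

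The gap is in how you obtain the vanishing. You source it from the two equal expressions in \ref{thm:firstformula}, taking $b=0$. But in that theorem the input is $A\in\Z^{n-2}$ and the class appearing is $\LLogDR(A,b,-b)$; setting $b=0$ gives the relation only for vectors whose last two entries are $a_{n-1}=a_n=0$ (and allowing arbitrary $b$ would still only reach the case $a_{n-1}+a_n=0$). In that situation the term $(a_{n-1}\psi_{n-1}-a_n\psi_n)\DR$ vanishes identically, so you are not proving the proposition for general $A\in\Z^n$. The indexing slip is visible in your interior computation $\delta^\theta_{A,0,0}=a_n\ell_n-a_{n-1}\ell_{n-1}$: with the \ref{thm:firstformula} conventions those last two slopes are $0$, not $a_{n-1},a_n$. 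The paper itself flags this, remarking before the proposition that the equalities of Theorem~A give only a ``special case'' of the relation.

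The paper's fix is short and bypasses \ref{thm:firstformula} entirely. For arbitrary $A\in\Z^n$, the PL function $\delta=\alpha^\theta_A(p_n)-\alpha^\theta_A(p_{n-1})$ restricted to $\LLogDRL(A)$ lifts to a genuine map $\LLogDRL(A)\to\G_\log$, namely $\ev_n/\ev_{n-1}$, because pierced curves have logarithmic sections (the same input underlying \ref{def:evmaps} and \ref{prop:glogpullback}, but no gluing is invoked). Since the tautological linear function $x$ on $\G_\log$ satisfies $\Phi(x)=0$, pulling back gives $\LLogDR(A)\cdot\Phi(\delta)=0$ directly. Your decomposition step then finishes the proof unchanged. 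So the missing idea is: obtain the vanishing from the lift to $\G_\log$ itself, not as a corollary of the gluing formula.
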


% In \cite{costantini2025integralspsiclassestwisteddouble} this is proven for a dense set of inputs by intricate combinatorics, and extended using polynomiality. The theory of pierced curves allows us to give a very short proof.

\begin{proof}[Proof of \ref{prop:splitpsi}]
Write $\alpha = \alpha_A^\theta$ for the piecewise linear polynomial defined in \ref{def:alphabtheta}. Let $\delta: \Mpt_{g,n}^\trop \to \G_\trop$ be given by $\alpha(p_n) - \alpha(p_{n-1})$.
We will prove the desired equality by computing $\LLogDR(A) \cdot \Phi(\delta)$ in two different ways.

On $\LLogDR(A)$, the piecewise linear function $\delta$ lifts to a map $\LLogDR(A) \to \G_\log$ given by $\ev_{n}/\ev_{n-1}$ where $\ev_i$ are the evaluation maps as defined in \ref{def:evmaps}. The canonical piecewise linear function $x: \G_\log \to \G_\trop$ satisfies $\Phi(x) = 0$, so $\LLogDR(A) \cdot \Phi(\delta) = 0$.

We can also compute $\LLogDR(A) \cdot \Phi(\delta)$ by splitting $\delta$ as a sum over the rays in $\tilde{\Mpt}_{g,n}^\trop = \tilde{\Mcal}_{g,n}^\trop \times \R_{\geq 0}^n$. Note that $\Phi(\delta)$ splits as a sum over the banana graphs, plus $\Phi(a_{n} \ell_n - a_{n-1} \ell_{n-1})$. The banana graph $(\Gamma,B)$ contributes $s(\Gamma) \prod_{e \in E(\Gamma)} b_e [\Gamma,\DR(C_1) \tensor \DR(C_2)]$, and by \cite[Lemma~4.3]{Holmes2023LogarithmicCohomologicalFT} we have $\Phi(a_{n} \ell_n - a_{n-1} \ell_{n-1}) = a_{n-1}\psi_{n-1} - a_n \psi_n$.
\end{proof}

\bibliographystyle{alpha}
\addcontentsline{toc}{section}{References}
\bibliography{prebib.bib}

%For the purpose of open access, a CC BY public copyright licence is applied to any Author Accepted Manuscript (AAM) arising from this submission.

\end{document}